\author{Ana Djurdjevac, Helena Kremp, Nicolas Perkowski}
\newtheorem{theorem}{Theorem}[section]
\newtheorem{definition}[theorem]{Definition}     
\newtheorem{proposition}[theorem]{Proposition}
\newtheorem{lemma}[theorem]{Lemma}	
\newtheorem{corollary}[theorem]{Corollary}
\newtheorem{remark}[theorem]{Remark}
\numberwithin{equation}{section}
\newcommand{\R}{\mathbb{R}} 
\newcommand{\N}{\mathbb{N}} 
\newcommand{\E}{\mathds{E}} 
\newcommand{\p}{\mathds{P}} 
\newcommand{\F}{\mathcal{F}} 
\renewcommand{\mathcal}[1]{\mathscr{#1}}
\renewcommand{\epsilon}{\varepsilon}
\newcommand{\squeeze}[2][0]{\mbox{$\medmuskip=#1mu\displaystyle#2$}}
\DeclarePairedDelimiter{\abs}{\lvert}{\rvert}
\DeclarePairedDelimiter{\norm}{\lVert}{\rVert}
\DeclarePairedDelimiter{\angles}{\langle}{\rangle}
\DeclarePairedDelimiter{\paren}{(}{)}
\begin{document}

\title{Rough homogenization for Langevin dynamics on fluctuating Helfrich surfaces}

\author{Ana Djurdjevac \footnote{Freie Universität Berlin, Arnimallee 9, 14195 Berlin; adjurdjevac@zedat.fu-berlin.de } 
\and Helena Kremp \footnote{Freie Universität Berlin, Arnimallee 7, 14195 Berlin; helena.kremp@fu-berlin.de} 
\and Nicolas Perkowski \footnote{Freie Universität Berlin, Arnimallee 7, 14195 Berlin; perkowski@math.fu-berlin.de}}

\maketitle
\begin{abstract}
\noindent In this paper, we study different scaling rough path limit regimes in space and time for the Langevin dynamics on a quasi-planar fluctuating Helfrich surfaces. The convergence results of the processes were already proven in \cite{dep}. We extend this work by proving the convergence of the Itô and Stratonovich rough path lift.  
For the rough path limit, there appears, typically, an area correction term to the Itô  iterated integrals, and in certain regimes to the Stratonovich iterated integrals.
This yields additional information on the homogenization limit and enables to conclude on homogenization results for diffusions driven by the Brownian motion on the membrane using the continuity of the Itô-Lyons map in rough paths topology. 

\smallskip
\noindent \textit{Keywords:} Brownian motion on a random hypersurface, lateral diffusion, Laplace-Beltrami operator, Helfrich membrane, rough stochastic homogenization \\
\textit{MSC2020:} 35B27, 37A50, 60F05, 60H30, 60J55, 60L20
\end{abstract}

\begin{section}{Introduction}

The lateral diffusion of particles is crucial
for cellular processes, including signal transmission, cellular organization and the
transport of matter (cf. \cite{mika2011macromolecule, mourao2014connecting, luby2013physical, almeida1995lateral}).  Motivated by these applications, i.e.  diffusion on cell membranes, we  consider the diffusion on a curved domain - a hypersurface $\mathcal{S}$, cf \cite{seifert1997configurations}. The Brownian motion on the surface, whose generator in local coordinates is the Laplace-Beltrami operator, is a simple example of a diffusing particle on a biological surface, which is also known from physics as the overdamped Langevin dynamics on a Helfrich membrane (cf. \cite{nb}). 

We will restrict our considerations to the classical situation of the so-called ``essentially 
flat surfaces'' $\mathcal{S}$. The standard way of representing the essentially flat surface is the Monge-gauge parametrization, where we specify the height $H$ of the hypersurface as a function of the coordinates from the flat base, namely over $[0,L]^2$ or, since we consider the periodic setting, over  $\mathbb{T}^{2}$ (such that we take $L=1$).

Moreover, these membranes are fluctuating both in time and space due to the spatial micro-structure and thermal fluctuations or active proteins.  
The analysis of the  macroscopic behavior of a laterally diffusive process on surfaces possessing microscopic space  and time scales was derived in \cite{duncan, dep}. Based on classical methods from homogenization theory, they prove that under the assumption of scale separation between the characteristic length and time scales of the
membrane fluctuations and the characteristic scale of the diffusing particle, the lateral
diffusion process can be well approximated by a Brownian motion on the plane with
constant diffusion tensor $D$. In particular, they show that $D$  depends in a highly nonlinear way on the detailed properties of the surface.

Since this work is motivated  by the Helfrich elasticity membrane model,  we briefly describe it here in order to better understand  understand the form of the  considered system of SDEs.

The classical description of fluid membranes $\mathcal{S}$ on the level of continuum elasticity is based on the   Canham \cite{canham1970minimum} -  Helfrich \cite{helfrich1973elastic} free energy:
\[
\mathcal{E}[\mathcal{S}] =  \frac{1}{2} \int_{[0,L]^2} \kappa K^2(x) dx 
\sqrt{|G|(x)}dx, 
\]
where $G$ is the metric tensor of $\mathcal{S}$ in local coordinates, $K$ is the mean and the constant $\kappa$ is the (bare) bending modulus. Note that the term with Gaussian curvature is omitted since we consider fluctuations of the membrane which do not change its topology. For more details about description of fluid lipid membranes see \cite{deserno2015fluid}.

Utilizing the standard approach (see \cite{dep, doi1988theory}) one can derive the dynamics of the surface fluctuations that are described by the stochastic partial differential equation (SPDE) of the type:
\begin{equation}\label{membrane}
\frac{dH(t)}{dt} = -RAH(t) + \xi(t)
\end{equation}
where $AH:=-\kappa\Delta^{2}H+\sigma\Delta H$ ($\sigma$ being the surface tension) is the restoring force for the free energy associated to $\mathcal{E}[\mathcal{S}]$. Moreover, $R$ is the operator that characterizes the effect of nonlocal interactions of the membrane
through the medium (for more details see \cite[section 4]{dep} or \cite[section 2.2]{duncan}; $Rf:=\Lambda\ast f$ for $\Lambda(x):=(8\pi\lambda\abs{x})^{-1}$, $f\in L^{2}_{\text{per}}([0,L]^{2})$, where $\lambda$ is the viscosity of the surrounding medium). The last term $\xi$ is a Gaussian field, that is white in time and with spatial fluctuations having mean zero and covariance operator $2(k_{B}T)R$, $k_{B}$ being the Boltzmann constant. Next, we consider the ansazt for the height function as the truncated  Galerkin-projection on the Fourier spanned space given by
\begin{align}\label{Hansatz}
H(x,t)=h(x,\eta_{t})=\sum_{\abs{k}\leqslant \tilde{K}}\eta_{t}^{k}e_{k}(x),\quad (x,t)\in\mathbb{T}^{2}\times\R^{+},
\end{align}
for the Fourier basis $(e_{k})_{k\in\mathbb{Z}^{2}}$ on the torus $\mathbb{T}^{2}$ and fixed cut-off $\tilde{K}\in\N$. Substituting the \cref{Hansatz}   into (\ref{membrane}) we see that the SPDE diagonalizes and that  coefficients $\eta=(\eta^{k})_{\abs{k}\leqslant K}$ with $K:= \#\{k\in\mathbb{Z}^{2}\mid 0<\abs{k}\leqslant\tilde{K}\}$ are independent Ornstein Uhlenbeck processes given by
\begin{align*}
d\eta^{k}_{t}=-\frac{\kappa\abs{2\pi k}^{3}+\sigma\abs{2\pi k}}{4\lambda}\eta^{k}_{t}dt+\sqrt{\frac{k_{B}T}{2\lambda\abs{2\pi k}}}dW^{k}_{t},
\end{align*} 
where $(W^{k})_{k}$ are independent complex-valued standard Brownian motions with constraint $\overline{W^{k}}=W^{-k}$, i.e. $W^{k}=\frac{1}{\sqrt{2}}(Re(W^{k})+i Im(W^{k}))$ with $Re(W^{k}),Im(W^{k})$ independent $\R^{K}$-valued Brownian motions. Moreover, using \cite[Lemma 6.25]{Stuart10}, we notice that without the ultra-violett cutoff $\tilde{K}$, the surface $H$ is almost surely Hölder-continuous with exponent $\alpha<1$ (in $d=2$), but not for $\alpha=1$. Due to this irregularity,  we can not define a diffusion on $H$ using the classical methods. This is the reason why most of the works that deal with the diffusion on $H$, assume the ultra-violet cutoff. 

Next, we consider a  particle on the surface $\mathcal{S}$ with the molecular diffusion $D_0$.
After the nondimensionalisation, we can  derive the equation that describes its dynamics (more comments will be given in the next section and the detailed derivation can be found in \cite[section 2.1]{duncan}. All together,  \cite[section 2.2]{duncan} one derives the system of coupled SDEs that describes the motion of the particle and fluctuating membrane:
\begin{align}
dX^\epsilon_t &= F(X^\epsilon_t , \eta^\epsilon_t)dt + \sqrt{\Sigma(X^\epsilon_t , \eta^\epsilon_t)} dB(t) \\
d\eta^\epsilon_t &= -\frac{1}{\epsilon} \Gamma \eta^\epsilon_t dt + \sqrt{\frac{2\Gamma \Pi}{\epsilon}} dW(t),
\end{align}
where $\epsilon :=4 D_0 \lambda/(k_{B}T) $  
 and where  $B$ is a standard Brownian motion independent of $W$. The coefficients $\Gamma,\Pi, F$ and $\Sigma$ will be specified later. 
Based on this particular setting, in \cite{dep} the authors generalize this model \eqref{eq:rsystem} to different $(\alpha, \beta)$ space-time scaling regimes and this will be the model that will be considered in this work in the rough path limit sense.

Motivated by the recent link between stochastic homogenization and rough paths from the works \cite{kelly2016smooth, fgl, rhfriz, radditive},
we prove a rough homogenization result for a Brownian particle on a fluctuating Gaussian hypersurface with covariance given by (the ultra-violet cutoff of) the Helfrich energy. Specifically, we extend previously mentioned results with proving the convergence towards a particular lift of the homogenization limit in rough path topology  for different scaling regimes $(\alpha,\beta)$. Interestingly, in some regimes, the rough path lift of $(X^{\epsilon})$ converges to a non-trivial lift of the limiting Brownian motion $X$, in the sense that, additionally, an area correction  to the iterated integrals appears. This phenomenon was already observed by \cite{ll} and \cite{fgl} in different situations. Considering the scaling limits in the rough path topology yields more information on the homogenization limit and  enables us to conclude homogenization results for diffusions driven by the Brownian motion on the membrane using the continuity of the Itô-Lyons map in rough path topology.

As in the works by \cite{duncan,dep,radditive}, we utilize martingale methods for additive functionals of Markov processes (cf. also \cite{klo}). That is, we identify a stationary, ergodic Markov process and exploit the solution of the associated Poisson equation for the generator of that Markov process to rewrite the (in general unbounded) drift term of $X^{\epsilon}$. We consider both Itô and Stratonovich rough path lifts. As already observed in \cite{radditive}, for the Stratonovich lift we expect an area correction to appear if and only if the underlying Markov process is non-reversible. Indeed, in the regime $(\alpha,\beta)=(1,1)$, we have that the underlying Markov process $Y^{\epsilon}:=\epsilon^{-1}X^{\epsilon}$ is reversible (under its invariant measure, for each fixed, stationary realization of $\eta$), and also the Stratonovich limit is the usual Stratonovich lift of the Brownian motion $X$. Contrary to that, in the $(\alpha,\beta)=(1,2)$ regime, an area correction for the Stratonovich lift appears (which we expect is truly non-vanishing). In the regime $(\alpha,\beta)=(0,1)$ the limit is obtained by averaging over the invariant measure of the Ornstein Uhlenbeck process $\eta$; and also the rough limit is given by the canonical lift of the Brownian motion, which follows as in this case the uniform controlled variation (UCV) condition is satisfied  for $(X^\epsilon)$. 

In the regime $(\alpha,\beta)=(1,-\infty)$ and $\eta_{0}$ being deterministic (i.e. $H$ being a non-random periodic surface), we have that $X$ is a diffusion with periodic coefficients and the equality in law $X^{\epsilon}\stackrel{d}{=}(\epsilon X_{\epsilon^{-2}t})_{t}$ holds. Note that this case corresponds to the quenched regime, i.e.  we consider the surface to be time-independent (or fix a stationary realization $\eta_{0}$).  Due to the functional central limit theorem for the Itô rough path lift of $X^{\epsilon}$ proven in \cite[section 4.3]{radditive} the rough path limit is given by a nontrivial lift of the limiting Brownian motion $X$.

Rough homogenization results are useful in combination with the continuity of the Itô-Lyons map to obtain rough homogenization results for particles whose velocity is governed by the velocity of the particle $X$.
For example,  processes like fusion are initiated by the velocities of  particles itself. However, note that in this case the underdamped Langevin dynamics would better describe  the system,  and in our work we consider the overdamped Langevin dynamics. Hence, our results could be seen as a starting point for considering these more complicated systems. 

The paper is structured as follows. \cref{sect:prelim} sets the model and the assumptions on the surface $H$. We also collect knowledge about the generator of the Markov process $(X,\eta)$ and the growth conditions on the coefficients, as well as recall the definition of the space of $\alpha$-Hölder rough path. \cref{sect:averaging}, \cref{sect:hom1} and \cref{sect:hom2} treat the scaling regimes $(\alpha,\beta)=(0,1)$, $(\alpha,\beta)=(1,2)$ and $(\alpha,\beta)=(1,1)$, respectively. For every case we prove tightness in $\gamma$-Hölder rough path topology for $\gamma \in (\frac{1}{3},\frac{1}{2})$. As a consequence, we derive the rough homogenization limit.  
\end{section}

\begin{section}{Preliminaries}\label{sect:prelim}

In this section we will fix the definition of the time dependent random hypersurface $H(x,t)$ and the diffusion $X$ on $H$.  For more details, we refer to \cite{duncan, dep}.

We will consider fluctuating hypersurfaces that can be represented as a graph of a sufficiently smooth random field  $H: [0,L]^d \times [0,\infty) \to \mathbb{R}$, i.e. it is represented via the so-called Monge gauge parametrization. More precisely, we assume that for each
$t>0$, $H(x; t)$ is smooth in $x$ and $H(x; t)$ is periodic in $x$ with period $L_H$ for each $t >0$. Without loss of generality, we assume that $L_{H}=1$.
Furthermore, we assume the existence of a characteristic timescale $T_H=T$, which models the observation time of the system. The hypersurface $\mathcal{S}(t)$ is parametrized over $[0,1]^d$  by $J: [0,1]^d \times [0,\infty) \to \mathbb{R}^{d+1}$ as
\begin{equation}
J(x,t) = (x, H(x,t)).
\end{equation}
The metric tensor of $\mathcal{S}(t)$ in local coordinates $x \in \mathbb{R}^d$ is given by
\begin{equation*}
G(x,t) = I + \nabla H(x,t) \otimes \nabla H(x,t)
\end{equation*}
and we define 
\begin{equation*}
|G|(x,t) := \det G(x,t) = 1+ |\nabla H(x,t)|^2.
\end{equation*}
Due to the physical application of the hypersurface, we restrict the dimension to $d=2$, but mathematically all results below also apply for general dimension $d$. 
The dimension would become relevant if one considered taking the cutoff away, as the surface becomes rougher with increasing dimension.\\
Motivated by the Helfrich elastic fluctuating membrane model presented  in the introduction \eqref{membrane}, we  assume that the random field $H(x,t)$ is Gaussian and it can be written as $H(x, t) = h(x,\eta_{t})$ via Fourier expansion with ultra-violet cutoff (for fixed $\tilde{K}\in\N$). More precisely, we define
\begin{align}\label{eq:h}
H(x,t):=h(x,\eta_{t}):=\sum_{k\in\mathbb{Z}^{2}:\,0<\abs{k}\leqslant \tilde{K}}\eta^{k}_{t}e_{k}(x),\quad x\in\mathbb{T}^{2}
\end{align}
(without the $k=0$ Fourier mode) where $(e_{k})_{k\in\mathbb{Z}^{2}}$ is the Fourier basis on the two-dimensional torus $\mathbb{T}^{2}$, i.e. $e_{k}(x)=\exp(2\pi i k\cdot x) \in C^\infty(\mathbb{T}^2)$ and $\mathbb{T}^{2}$ is identified with $[0,1]^{2}$. The cutoff is needed to ensure differentiability of the surface. The equation \eqref{membrane} for $H$ then yields the dynamics of $\eta=(\eta^{k})_{k\in\mathbb{Z}^{2}:0<\abs{k}\leqslant \tilde{K}}$, which is given by the complex-valued Ornstein Uhlenbeck processes with
\begin{align*}
d\eta_{t}=-\Gamma\eta_{t}dt+\sqrt{2\Gamma\Pi}dW_{t}
\end{align*} 
for a $K$-dimensional standard complex-valued Brownian motion $W$  with complex conjugate $\overline{W^{k}_{t}}=W^{-k}_{t}$, where $K:=\#\{k\in\mathbb{Z}^{2}\mid 0<\abs{k}\leqslant\tilde{K}\}$ and where $\Gamma,\Pi$ are real matrices, such that $\overline{\eta^{k}}=\eta^{-k}$, which implies that $H$ is real-valued as $\overline{H}=H$. 
Since we work in the real-valued setting, we
identify $\eta$ with $(Re(\eta),Im(\eta))$, which is a $2K$-dimensional real-valued Ornstein-Uhlenbeck process with the above dynamics for a $2K$-dimensional real-valued standard Brownian motion $W$ and with the property that $Re(\eta^{k})=Re(\eta^{-k})$ and $Im(\eta^{k})=-Im(\eta^{-k})$. The drift matrix $\Gamma$ is symmetric and positive definite, defined by $\Gamma:=diag(\Gamma_{k})$ with 
\begin{align*}
\Gamma_{k}=\frac{\kappa^{\ast}\abs{2\pi k}^{4}+\sigma^{\ast}\abs{2\pi k}^{2}}{\abs{2\pi k}},
\end{align*}
where $\kappa^{\ast}=\kappa/(k_{B}T),\sigma^{\ast}=\sigma/(k_{B}T)$ are positive constants that depend on the geometry of $\mathcal{S}$.
The diffusion matrix $\Pi$ is also symmetric and positive definite and defined as $\Pi:=diag(\Pi_{k})$ with 
\begin{align*}
\Pi_{k}=\frac{1}{\kappa^{\ast}\abs{2\pi k}^{4}+\sigma^{\ast}\abs{2\pi k}^{2}}.
\end{align*}
 
\noindent Since the symmetric, positive-definite matrices $\Gamma,\Pi$ commute, the normal distribution
\begin{equation}
 N(0,\Pi)=:\rho_{\eta}
 \end{equation}
is the invariant measure for the Ornstein Uhlenbeck process $\eta$. Then we have 
\begin{equation}\label{rho_eta}
   \rho_{\eta}(d\eta)=\rho_{\eta}d\eta=\frac{1}{\sqrt{(2\pi)^{2}\abs{\Pi}}}\exp(-\frac{1}{2}\eta\cdot\Pi^{-1}\cdot\eta)d\eta,
\end{equation} with $\abs{\Pi}:=\det(\Pi)$ (note that we use the same notation $\rho_{\eta}$ for the measure and its density).
The generator $\mathcal{L}_{\eta}$ of $\eta$ is given by 
\begin{equation}\label{OUgen}
 \mathcal{L}_{\eta}=-\Gamma\eta\cdot\nabla_{\eta}+\Pi\Gamma:\nabla_{\eta}\nabla_{\eta}.
\end{equation}
Here and later on,  we will use the notation
 $$A:\nabla_{x}\nabla_{x}f(x):=\sum_{i,j=1}^{n}A_{i,j}\partial_{x_{i}}\partial_{x_{j}}f(x) \quad \text{for } A\in\R^{n\times n}, f\in C^{2}(\R^{n},\R), n\in\N.$$\\
The generator $\mathcal{L}_{\eta}$ is a closed, unbounded operator on $L^{2}(\rho_{\eta})$ with domain $dom(\mathcal{L}_{\eta})=\{f\in L^{2}(\rho_{\eta})\mid \mathcal{L}_{\eta}f\in L^{2}(\rho_{\eta})\}$. Observe that, since  $\mathcal{L}_{\eta}^{\ast}\rho_{\eta}=0$ for the Lebesgue adjoint $\mathcal{L}_{\eta}^{\ast}$, the invariance of $\rho_{\eta}$, i.e. 
 $$\langle\mathcal{L}_{\eta}f\rangle_{\rho_{\eta}}=\int \mathcal{L}_{\eta}f(x)\rho_{\eta}(x)dx=0, \quad  \text{for all } f\in dom(\mathcal{L}_{\eta}),$$
  can  be checked easily. Let us define the space
 \begin{equation}
 H^{1}(\rho_{\eta}):=\{f\in dom(\mathcal{L}_{\eta})\mid \langle (-\mathcal{L}_{\eta})f,f\rangle_{\rho_{\eta}}<\infty\}.
 \end{equation}
Furthermore, notice that for the Ornstein-Uhlenbeck generator $\mathcal{L}_{\eta}$, spectral gap estimates hold true, that is, there exists a constant $C>0$, such that 
\begin{align}\label{eq:sp-gap-eta}
\norm{f}_{H^{1}(\rho_{\eta})}^{2}=\langle(-\mathcal{L}_{\eta}) f,f\rangle_{\rho_{\eta}}\geqslant C\norm{f-\langle f\rangle_{\rho_{\eta}}}_{\rho_{\eta}}^{2}
\end{align} for any $f\in H^{1}(\rho_{\eta})$ with $\langle f\rangle_{\rho_{\eta}}:=\int f(\eta)d\rho_{\eta}$. Indeed, from a simple calculation using the  invariance  for $f^{2}$, it follows that $\langle(-\mathcal{L}_{\eta}) f,f\rangle_{\rho_{\eta}}=\langle \Pi\Gamma\nabla_{\eta} f,\nabla_{\eta} f\rangle_{\rho_{\eta}}$. Then \eqref{eq:sp-gap-eta} follows from $\min_{\mathbb{T}^{2}}\rho_{\eta}>0$ and the Poincaré-inequality for the Laplacian on $H^{1}(\mathbb{T}^{2},d\eta)$.\\ 
In particular, $\rho_{\eta}$ is an ergodic measure for the Ornstein-Uhlenbeck process $\eta$. Furthermore, if $f$ is centered under $\rho_{\eta}$, then $P_{t}^{\eta}f$ is centered by invariance and thus \eqref{eq:sp-gap-eta} applied for $P_{t}^{\eta}f$ together with $\partial_{t}\langle P_{t}^{\eta}f,P_{t}^{\eta}f\rangle_{\rho_{\eta}}=2\langle\mathcal{L}_{\eta}P_{t}^{\eta}f,P_{t}^{\eta}f\rangle_{\rho_{\eta}}$ yield the spectral gap estimates for the semigroup $(P_{t}^{\eta})_{t\geqslant 0}$ of the Ornstein-Uhlenbeck process:
\begin{align*}
\norm{P_{t}^{\eta}f-\langle f\rangle_{\rho_{\eta}}}_{L^{2}(\rho_{\eta})}\leqslant e^{-Ct}\norm{f}_{L^{2}(\rho_{\eta})}
\end{align*} for all $t\geqslant 0$ and $f\in L^{2}(\rho_{\eta})$ (sometimes also called exponential ergodicity). 

We want to consider a Brownian motion $X$ on the Helfrich membrane $H$ given in \eqref{eq:h}. This diffusion will be driven by an independent Brownian motion $B$. For each fixed realization of the membrane, 
 it will be the Markov process that has in local coordinates the Laplace-Beltrami operator $\mathcal{L}^{H}=\mathcal{L}$ as a generator. As we assume the expansion \eqref{eq:h} with coefficients $\eta$, we obtain a system of SDEs for $(X,\eta)$ describing the dynamics of the diffusion $X$ on the membrane $H$. As in \cite{dep}, we thus define:

\begin{definition}
Let $(\Omega,\F,\p)$ be a probability space and $B$ a two-dimensional standard Brownian motion independent of a $2K$-dimensional standard Brownian motion $W$. Let $x_{0}$ be a random variable with values in $\mathbb{T}^{2}$ independent of $B$ and $W$. Let $(X,\eta)$ be the solution of the following system of SDEs
\begin{align}\label{eq:system}
&dX_{t}=F(X_{t},\eta_{t})dt+\sqrt{2\Sigma(X_{t},\eta_{t})}dB_{t},\quad X_{0}=x_{0}\\
&d\eta_{t}=-\Gamma\eta_{t}dt+\sqrt{2\Gamma\Pi}dW_{t},\quad \eta_{0}\sim\rho_{\eta}
\end{align} 
with $\Sigma:\mathbb{T}^{2}\times\R^{2K}\to\R^{2\times 2}_{sym}$, where $\Sigma(x,\eta)$ is the inverse of the metric tensor matrix $g(x,\eta)\in\R^{2\times 2}$  defined as 
\begin{align}\label{eq:defSigma}
\Sigma(x,\eta):=g^{-1}(x,\eta):=(I+\nabla_{x} h(x,\eta)\otimes \nabla_{x} h(x,\eta))^{-1}
\end{align} and $F:\mathbb{T}^{2}\times\R^{2K}\to\mathbb{T}^{2}$ with (notation: $\abs{g}=\abs{\Sigma^{-1}}:=\det(\Sigma^{-1})$)
\begin{align}\label{eq:defF}
F(x,\eta):=\frac{1}{\sqrt{\abs{g}(x,\eta)}}\nabla_{x}\cdot (\sqrt{\abs{g}}g^{-1}(x,\eta)).
\end{align}
Then we call $X$ a Brownian motion on the Helfrich membrane $H$ (started in $x_{0}$).
\end{definition}
\noindent 
One can show  (cf. \cite[Proposition 2.3.1]{duncan}) that the solution $(X,\eta)$ exists and it is a Markov process with generator on smooth, compactly supported test functions $f:\mathbb{T}^{2}\times\R^{2K}\to\R$ given by 
\begin{align*}
(\mathcal{L}+\mathcal{L}_{\eta})f(x,\eta)&=\frac{1}{\sqrt{\abs{g}(x,\eta)}}\nabla_{x}\cdot (\sqrt{\abs{g}(x,\eta)}\Sigma(x,\eta)\nabla_{x}f(x,\eta))+\mathcal{L}_{\eta}f(x,\eta)\\&=F(x,\eta)\nabla_{x} f(x,\eta)+\Sigma (x,\eta):\nabla_{x}\nabla_{x}f(x,\eta)+\mathcal{L}_{\eta}f(x,\eta),
\end{align*} 
where $\mathcal{L}_{\eta}$ is the generator of the Ornstein Uhlenbeck process $\eta$ given with (\ref{OUgen}). Moreover, from the proof of \cite[Proposition 2.3.1]{duncan}, we conclude the following uniform bounds: there exists a constant $C_{1}>0$ such that
\begin{align}\label{eq:Sigma}
\abs{\Sigma(x,\eta)}_{F}\leqslant C_{1},\qquad\forall (x,\eta)\in\mathbb{T}^{2}\times\R^{2K},
\end{align} where $\abs{\cdot}_{F}$ denotes the Frobenius-norm and there exists a constant $C_{2}>0$ such that 
\begin{align}\label{eq:F}
\abs{F(x,\eta)}\leqslant C_{2}(1+\abs{\eta}),\qquad\forall (x,\eta)\in\mathbb{T}^{2}\times\R^{2K},
\end{align} 
where $\abs{\cdot}$ denotes the usual euclidean norm.

\noindent We are interested in considering fluctuations of the membrane in time ($\epsilon^{\beta}$, $\beta\geqslant 0$ or $\beta =-\infty$) and space ($\epsilon^{\alpha}$, $\alpha\geqslant 0$) with different speeds $\alpha,\beta$. More precisely, we consider instead of $H(x,t)$ the fluctuating surface $\epsilon^{\alpha}H(\frac{x}{\epsilon^{\alpha}},\frac{t}{\epsilon^{\beta}})=\epsilon^{\alpha}h(\epsilon^{-\alpha}x,\eta_{\epsilon^{-\beta}t})$, which transforms the system of equations (\ref{eq:system}) into
\begin{align}\label{eq:rsystem}
&dX^{\epsilon}_{t}=\frac{1}{\epsilon^{\alpha}}F\paren[\bigg]{\frac{X^{\epsilon}_{t}}{\epsilon^{\alpha}},\eta^{\epsilon}_{t}}dt+\sqrt{2\Sigma\paren[\bigg]{\frac{X^{\epsilon}_{t}}{\epsilon^{\alpha}},\eta^{\epsilon}_{t}}}dB_{t} \\
&d\eta^{\epsilon}_{t}=-\frac{1}{\epsilon^{\beta}}\Gamma\eta^{\epsilon}_{t}dt+\sqrt{\frac{2\Gamma\Pi}{\epsilon^{\beta}}}d\tilde{W}_{t},\quad\eta_{0}^{\epsilon}\sim\rho_{\eta}.
\end{align} 
Here we define the Brownian motion $\tilde{W}_{t}:=\epsilon^{\beta/2}W_{\epsilon^{-\beta}t}$ and thus $(\eta_{\epsilon^{-\beta}t})_{t\geqslant 0}=(\eta^{\epsilon}_{t})_{t\geqslant 0}$. Since we are interested in convergence in distribution, we may replace $\tilde{W}$ by $W$, to abuse the notation.
We refer to \cite[section 2.3.3]{duncan} for the derivation of the system and the physical background.
Furthermore, note that stationarity and Gaussianity imply boundedness of all moments of $\eta^{\epsilon}_{t}$ in $\epsilon, t$. 

As already indicated, we want to study the behaviour of the Itô and Stratonovich rough path lift of $(X^{\epsilon})$ for different speeds $\alpha,\beta$ when $\epsilon\to 0$.
More precisely, by relabeling $\epsilon^{\alpha}$, all relevant regimes to consider are $(\alpha,\beta)=(0,1)$ and $(\alpha,\beta)\in {1}\times [-\infty,\infty)$ (i.e. all other scaling regimes for $\alpha$ can be transformed into one of these, yielding the same limit behavior). By the considerations in \cite[section 6]{duncan}, it moreover turns out that the regimes $(\alpha,\beta)\in\{(0,1),(1,2),(1,1)\}$ are the most interesting ones, in the sense that they yield, together with the quenched regime $(\alpha,\beta)=(1,-\infty)$, the four different limit behaviours that can occur (cf. \cite[Theorem 6.0.1]{duncan}).  
There is one scaling regime, $\alpha=1$ and $\beta\in (2,3]$, where the limit is actually open, as certain Poisson equations needed to prove the limit might not have solutions. 
Considering the regimes $(\alpha,\beta)\in\{(0,1),(1,2),(1,1)\}$, we prove the   convergence of the rough path lift and identify the limit. In the quenched regime, $(\alpha,\beta)=(1,-\infty)$ with $\eta_{0}$ being deterministic, the limit follows from the work \cite{radditive}, cf. the rough central limit theorem for diffusions with periodic coefficients in \cite[section 4.3]{radditive}.\\  
The convergence results will be proven in $\gamma$-Hölder rough path topology. Here we briefly recall the definition of a $\gamma$-Hölder rough path and for more details we refer the reader to \cite{fh}. We will write $X_{s,t} := X_t - X_s$ and we define 
$$\Delta_{T}:=\{(s,t)\in[0,T]^{2}\mid s\leqslant t\} .$$

\begin{definition}\cite[Def. 2.1]{fh}
For $\gamma\in (1/3,1/2]$ we call $(X,\mathbb{X})\in C([0,T],\R^{d})\times C(\Delta_{T},\R^{d\times d})$ a $\gamma$-Hölder rough path if:
\begin{enumerate}
\item[i)] Chen's relation holds, that is 
\begin{align*}
\mathbb{X}_{r,t}-\mathbb{X}_{r,s}-\mathbb{X}_{s,t}=X_{s,u}\otimes X_{u,t}
\end{align*} for all $0\leqslant r\leqslant s\leqslant t\leqslant T$ with $\mathbb{X}_{t,t}=0$, where $X_{s,t}:=X_{t}-X_{s}$,

\item[ii)] the (inhomogeneous) $\gamma$-Hölder norms are finite, that is
\begin{align*}
\norm{(X,\mathbb{X})}_{\gamma}:=\norm{X}_{\gamma,T}+\norm{\mathbb{X}}_{2\gamma,T}:=\sup_{0\leqslant s<t\leqslant T}\frac{\abs{X_{s,t}}}{\abs{t-s}^{\gamma}}+\sup_{0\leqslant s<t\leqslant T}\frac{\abs{\mathbb{X}_{s,t}}}{\abs{t-s}^{2\gamma}}<\infty.
\end{align*}
\end{enumerate}  
We denote the nonlinear space of all such $\gamma$-Hölder rough paths by $C_{\gamma,T}$ equipped with the distance
\begin{align*}
\norm{(X^{1},\mathbb{X}^{1});(X^{2},\mathbb{X}^{2}) }_{\gamma}=\norm{X^1-X^2}_{\gamma,T}+\norm{\mathbb{X}^1-\mathbb{X}^2}_{2\gamma,T}.
\end{align*}
\end{definition}

\begin{remark}
For a two-dimensional Brownian motion $B$, the Itô lift $(B,\mathbb{B}_{\mathrm{Ito}})$, where $\mathbb{B}_{\mathrm{Ito}}(s,t):=\int_{s}^{t}B_{s,r}\otimes dB_{r}$ are Itô-integrals, as well as the Stratonovich lift $(B,\mathbb{B}_{\mathrm{Strato}})$ for $\mathbb{B}_{\mathrm{Strato}}(s,t):=\int_{s}^{t}B_{s,r}\otimes\circ dB_{r}$ being Stratonovich integrals, are for any $\epsilon>0$ almost surely $\gamma$-Hölder rough path for $\gamma=1/2-\epsilon$, cf. \cite[Ch. 3]{fh}. But also $(B,(s,t)\mapsto\mathbb{B}_{s,t}+A(t-s))$ is a $\gamma$-rough path, where $A$ is a matrix $A\in\R^{2\times 2}$ and $\mathbb{B}=\mathbb{B}_{\mathrm{Ito}}$ or $\mathbb{B}=\mathbb{B}_{\mathrm{Strato}}$. The latter will be the lift of the Brownian motion that we encounter below.
\end{remark}

We finalize this section by recalling the concept of uniform controlled variations by Kurtz and Protter (\cite[Definition 7.3]{kp}; here for continuous semimartingales without the need of stopping times). 

\begin{definition}
A sequence $(X^{\epsilon})_{\epsilon}$ of $\R^{d}$-valued continuous semi-martingales on $[0,T]$ decomposed as the sum $X^{\epsilon} = M^{\epsilon} +A^{\epsilon}$, where $M^{\epsilon}$ is a local martingale  and $A^{\epsilon}$ is of finite variation,  satisfies the UCV (Uniformly Controlled Variations) condition if and only if
\begin{equation}\label{def:UCV}
( \left< M^{\epsilon,i} \right>_T)_{\epsilon} \quad \text{and} \quad (\mathrm{Var}_{1,[0,T]} (A^{\epsilon}))_{\epsilon} \quad \text{are tight in } \R,
\end{equation}
for $i=1,...,d$ and $\mathrm{Var}_{1,[0,T]}(f):=\lim_{\abs{\pi}\to 0}\sum_{s,t\in\pi}\abs{f_{t}-f_{s}}$ denotes the one-variation of a function $f:[0,T]\to \R^{d}$ and the limit is taken over all finite partitions $\pi$ of $[0,T]$ with mesh size $\abs{\pi}=\max_{s,t\in\pi}\abs{t-s}\to 0$.
\end{definition}
\begin{remark}
The tightness \eqref{def:UCV} follows from
\begin{align}\label{UCVcond}
\max_{i=1,\dots,d}\sup_{\epsilon}\paren[\big]{\E[\langle M^{\epsilon,i}\rangle _{T}]+\E[\mathrm{Var}_{1,[0,T]}(A^{\epsilon})]}<\infty,
\end{align}
which is the bound, as we will verify when showing that a semimartingale satisfies the UCV condition. 
\end{remark}

We state a version of \cite[Definition 7.1, Theorem 7.4]{kp}, that will be repeatedly exploited in the sequel in order to prove the distributional convergence of certain Itô integrals. For the proof see \cite[Thm. 7.4]{kurtz1996weak} and \cite[Thm. 2.2]{kp}.

\begin{proposition}{\cite[Thm. 7.4]{kurtz1996weak}}\label{prop:UCV}
A sequence $(X^{\epsilon})_{\epsilon}$ of $\R^{d}$-valued continuous semi-martingales on $[0,T]$ satisfies the UCV condition if and only if for all sequences $(Y^{\epsilon})_{\epsilon}$ with $(Y^{\epsilon},X^{\epsilon})\Rightarrow (Y,X)$ jointly in distribution in $C([0,T],\R^{2d})$ and with $Y^{\epsilon}$ integrable against $X^{\epsilon}$ and $Y$ against $X$ in the Itô sense, it follows that $(Y^{\epsilon},X^{\epsilon},\int_{0}^{\cdot} Y^{\epsilon}_{s}\otimes dX^{\epsilon}_{s})\Rightarrow (Y,X,\int_{0}^{\cdot} Y_{s} \otimes dX_{s})$ in distribution in $C([0,T],\R^{2d+d\times d})$.
\end{proposition}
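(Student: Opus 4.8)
The statement to prove is \Cref{prop:UCV}, the Kurtz--Protter characterization of the UCV condition. Since the excerpt explicitly says ``For the proof see \cite[Thm. 7.4]{kurtz1996weak} and \cite[Thm. 2.2]{kp}'', the intent is clearly to cite rather than reprove; nonetheless, let me sketch the route one would take to establish it from scratch, which is the substance of the Kurtz--Protter theory.

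The plan is to prove the two implications separately. For the easy direction (``UCV $\Rightarrow$ stable convergence of the integrals''), I would first use the Burkholder--Davis--Gundy inequality together with the tightness of $(\langle M^{\epsilon,i}\rangle_T)_\epsilon$ to deduce tightness of the martingale parts $(M^\epsilon)_\epsilon$ in $C([0,T],\R^d)$, and combine this with tightness of $(\mathrm{Var}_{1,[0,T]}(A^\epsilon))_\epsilon$ to get tightness of the whole sequence of semimartingales along with their decompositions. Given a weakly convergent pair $(Y^\epsilon,X^\epsilon)\Rightarrow(Y,X)$, I would pass to a subsequence along which $(Y^\epsilon,M^\epsilon,A^\epsilon,\langle M^\epsilon\rangle,\mathrm{Var}(A^\epsilon))$ converges jointly in law, invoke the Skorokhod representation theorem to work on a common probability space with almost sure convergence, and then identify the limit of $\int_0^\cdot Y^\epsilon_s\otimes dX^\epsilon_s = \int_0^\cdot Y^\epsilon_s\otimes dM^\epsilon_s + \int_0^\cdot Y^\epsilon_s\otimes dA^\epsilon_s$. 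The drift integral converges by a Helly-type argument using the uniform one-variation bound; the martingale integral converges because, after the Skorokhod coupling, $M^\epsilon$ are semimartingales with uniformly controlled brackets, so one can apply the stability theorem for stochastic integrals (the key input being that $(Y^\epsilon,M^\epsilon)$ are ``good integrators'' in the sense of Kurtz--Protter, which is precisely what the bracket tightness guarantees). The limit is then identified as $\int_0^\cdot Y_s\otimes dX_s$ by checking it against the martingale problem / using that the limiting decomposition $X=M+A$ is again a semimartingale decomposition.

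For the converse direction, I would argue by contraposition: if UCV fails, then either $(\langle M^{\epsilon,i}\rangle_T)_\epsilon$ or $(\mathrm{Var}_{1,[0,T]}(A^\epsilon))_\epsilon$ is not tight, and in either case one can construct a \emph{bounded} (even piecewise-constant, adapted) integrand sequence $(Y^\epsilon)_\epsilon$ converging in law — e.g. along a subsequence on which the offending quantity diverges, choose $Y^\epsilon$ to be $\pm 1$ matching the sign of the increments of $A^\epsilon$ on a fine partition, or to extract the quadratic variation of $M^\epsilon$ — such that $\int_0^\cdot Y^\epsilon_s\otimes dX^\epsilon_s$ fails to be tight, contradicting the assumed convergence of the integrals. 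This is the standard ``the integral against a clever integrand recovers the variation/bracket'' trick.

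The main obstacle is the stability of stochastic integrals step in the forward direction: one must be careful that joint weak convergence of integrand and integrator does \emph{not} in general imply convergence of the stochastic integral, and the whole point of the UCV hypothesis is to supply exactly the uniform control (via BDG applied to the brackets) that upgrades weak convergence of the integrators to convergence of the integrals. Handling this rigorously requires either the martingale-problem characterization of the limit or a careful time-discretization argument showing that Riemann sums converge uniformly in $\epsilon$. In our setting, however, we will only ever \emph{use} the proposition in the ``verify the moment bound \eqref{UCVcond}, hence UCV, hence the integrals converge'' direction, so for the purposes of this paper it suffices to invoke \cite[Thm.~7.4]{kurtz1996weak} and \cite[Thm.~2.2]{kp} directly.
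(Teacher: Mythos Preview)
Your proposal is correct: the paper does not prove this proposition but simply cites \cite[Thm.~7.4]{kurtz1996weak} and \cite[Thm.~2.2]{kp}, exactly as you identified. The sketch you supply of the Kurtz--Protter argument is a reasonable outline of the original proof, but it goes beyond what the paper does; for the paper's purposes (and yours), the citation alone suffices since only the forward implication is ever invoked, via the moment bound \eqref{UCVcond}.
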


Let us furthermore state a version of the Kolmogorov criterion for rough paths, \cite[Theorem 3.1]{fh}, that we use in the sequel.
\begin{lemma}\label{lem:rp-kol}
Let $(X^{\epsilon},\mathbb{X}^{\epsilon})_{\epsilon}$ be a family of $\gamma$-Hölder rough path, $\gamma<1/2$. Assume that for any $p> 2$, there exist constants $C_{1},C_{2}>0$ such that:
\begin{align}\label{eq:kx}
\sup_{\epsilon}\E[\abs{X^{\epsilon,i}_t-X^{\epsilon,i}_s}^{p}]\leqslant C_{1}\abs{t-s}^{p/2},\quad\forall s,t\in[0,T]
\end{align} and
\begin{align}\label{eq:kxx}
\sup_{\epsilon}\E\bigg[\abs[\bigg]{\int_{s}^{t}X^{\epsilon, i}_{s,r} dX^{\epsilon,j}_{r}}^{p/2}\bigg]\leqslant C_{2}\abs{t-s}^{p/2},\quad\forall s,t\in\Delta_{T}
\end{align} for $i,j\in\{1,...,d\}$.
Then for any $\gamma'<1/2$,
\begin{align*}
\sup_{\epsilon}\E[\norm{(X^{\epsilon},\mathbb{X}^{\epsilon})}_{\gamma'}^{p}]<\infty.
\end{align*}
If furthermore $\sup_{\epsilon}\E[\abs{X_{0}^{\epsilon}}]<\infty$ (i.e. tightness of $(X_{0}^{\epsilon})_{\epsilon}$) holds true, then it follows that $(X^{\epsilon},\mathbb{X}^{\epsilon})_{\epsilon}$ is tight in $C_{\gamma,T}$. 
\end{lemma}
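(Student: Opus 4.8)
The plan is to establish the uniform moment bound by a Kolmogorov-type argument carried out at both levels of the rough path, keeping track that every constant produced depends only on $C_{1}$, $C_{2}$, $p$, $\gamma'$ and $T$, hence not on $\epsilon$; the tightness statement then follows from a compact-embedding (Prokhorov) argument. Fix the target exponent $p>2$ and $\gamma'<1/2$ and choose an auxiliary exponent $q$ with $q\geqslant 2p$ and $q>2/(1-2\gamma')$. Applying the hypotheses \eqref{eq:kx} and \eqref{eq:kxx} with $q$ in place of $p$, and writing $\mathbb{X}^{\epsilon,ij}_{s,t}=\int_{s}^{t}X^{\epsilon,i}_{s,r}\,dX^{\epsilon,j}_{r}$ for the entry of $\mathbb{X}^{\epsilon}_{s,t}$, we obtain
\begin{equation*}
\sup_{\epsilon}\E\big[\abs{X^{\epsilon,i}_{s,t}}^{q}\big]\leqslant C_{1}\abs{t-s}^{q/2},\qquad\sup_{\epsilon}\E\big[\abs{\mathbb{X}^{\epsilon,ij}_{s,t}}^{q/2}\big]\leqslant C_{2}\abs{t-s}^{q/2}.
\end{equation*}
Since $q/2=1+(q/2-1)$ with $(q/2-1)/q=\tfrac12-\tfrac1q>\gamma'$, the classical Kolmogorov--Chentsov continuity theorem applied to the $\R^{d}$-valued process $X^{\epsilon}$ shows that its (already continuous) sample paths are $\gamma'$-Hölder on $[0,T]$ and that $\sup_{\epsilon}\E[\norm{X^{\epsilon}}_{\gamma',T}^{q}]\leqslant C(C_{1},q,\gamma',T)<\infty$.

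For the second level I would use Chen's relation to reduce to dyadic increments, exactly as in the proof of \cite[Thm.~3.1]{fh}. Writing $D_{n}$ for the dyadic points of $[0,T]$ at level $n$ and iterating $\mathbb{X}_{s,t}=\mathbb{X}_{s,u}+\mathbb{X}_{u,t}+X_{s,u}\otimes X_{u,t}$ along a dyadic chain between $s$ and $t$, together with $\abs{X_{s,u}\otimes X_{u,t}}\leqslant\norm{X}_{\gamma',T}^{2}\abs{t-s}^{2\gamma'}$ for $s\leqslant u\leqslant t$, one gets the deterministic bound
\begin{equation*}
\norm{\mathbb{X}^{\epsilon}}_{2\gamma',T}\leqslant C_{\gamma',T}\paren[\Big]{\sup_{n\geqslant 0}2^{2n\gamma'}\max_{u\in D_{n},\,u+T2^{-n}\leqslant T}\abs{\mathbb{X}^{\epsilon}_{u,\,u+T2^{-n}}}+\norm{X^{\epsilon}}_{\gamma',T}^{2}},
\end{equation*}
valid for any $\gamma$-Hölder rough path. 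Raising this to the power $r\assign q/2\geqslant p$, taking expectations, and using $\E[\max_{u}\abs{\mathbb{X}^{\epsilon}_{u,u+T2^{-n}}}^{r}]\leqslant\sum_{u}\E[\abs{\mathbb{X}^{\epsilon}_{u,u+T2^{-n}}}^{r}]\lesssim 2^{n}(T2^{-n})^{r}$, so that $\sum_{n}2^{2n\gamma' r}2^{n}2^{-nr}<\infty$ (this is where $\gamma'<\tfrac12-\tfrac1q$ enters), while $\E[\norm{X^{\epsilon}}_{\gamma',T}^{2r}]=\E[\norm{X^{\epsilon}}_{\gamma',T}^{q}]<\infty$ by the first step, yields $\sup_{\epsilon}\E[\norm{\mathbb{X}^{\epsilon}}_{2\gamma',T}^{r}]<\infty$. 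By Lyapunov's inequality ($r\geqslant p$) and $\norm{(X,\mathbb{X})}_{\gamma'}=\norm{X}_{\gamma',T}+\norm{\mathbb{X}}_{2\gamma',T}$, this gives $\sup_{\epsilon}\E[\norm{(X^{\epsilon},\mathbb{X}^{\epsilon})}_{\gamma'}^{p}]<\infty$.

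For the tightness claim, fix $\gamma'\in(\gamma,1/2)$; by the first part there is $p>2$ with $M_{p}\assign\sup_{\epsilon}\E[\norm{(X^{\epsilon},\mathbb{X}^{\epsilon})}_{\gamma'}^{p}]<\infty$, and $M_{0}\assign\sup_{\epsilon}\E[\abs{X^{\epsilon}_{0}}]<\infty$ by hypothesis. For $R>0$ the set $K_{R}\assign\{(X,\mathbb{X})\in C_{\gamma',T}:\abs{X_{0}}\leqslant R,\ \norm{(X,\mathbb{X})}_{\gamma'}\leqslant R\}$ is a compact subset of $C_{\gamma,T}$: it is relatively compact by Arzel\`a--Ascoli on $[0,T]$ and on $\Delta_{T}$ combined with the interpolation estimates $\norm{f}_{C^{\gamma}}\lesssim\norm{f}_{\infty}^{1-\gamma/\gamma'}\norm{f}_{C^{\gamma'}}^{\gamma/\gamma'}$ at both levels, and it is closed since Chen's relation, $\mathbb{X}_{t,t}=0$ and the Hölder bounds are stable under the limit. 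Markov's inequality gives $\p((X^{\epsilon},\mathbb{X}^{\epsilon})\notin K_{R})\leqslant M_{p}R^{-p}+M_{0}R^{-1}$, which is uniformly (in $\epsilon$) small for $R$ large; hence $(X^{\epsilon},\mathbb{X}^{\epsilon})_{\epsilon}$ is tight in $C_{\gamma,T}$.

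\emph{Main obstacle.} The only non-routine step is the second-level estimate: $\mathbb{X}^{\epsilon}$ is a two-parameter object that is not a path increment, so the ordinary Kolmogorov criterion does not apply to it directly, and the dyadic decomposition via Chen's relation displayed above is precisely what repairs this (it is the core of \cite[Thm.~3.1]{fh}). The first-level bound, the bookkeeping of $\epsilon$-uniformity of the constants, and the compact-embedding tightness argument are all routine.
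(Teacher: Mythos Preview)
Your proof is correct and follows essentially the same route as the paper: both invoke the Kolmogorov criterion for rough paths \cite[Thm.~3.1]{fh} (applied with $\beta=1/2$ and exponent large enough that $\gamma'<1/2-1/q$) for the uniform moment bound, and then the compact embedding $C_{\gamma',T}\hookrightarrow C_{\gamma,T}$ for $\gamma<\gamma'$ together with Markov's inequality for tightness. The only difference is that the paper cites these two results directly, whereas you unpack the dyadic Chen-relation argument and the Arzel\`a--Ascoli/interpolation compactness proof explicitly.
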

\begin{proof}
The proof follows from the proof of \cite[Theorem 3.1]{fh} applied to $\beta=1/2$ and $\gamma'=\beta-1/q-\epsilon$, $\epsilon>0$, using the fact that, according to assumption \eqref{eq:kx}, \eqref{eq:kxx} holds for any $p>2$.
Tightness in $C_{\gamma,T}$ then follows utilizing the compact embedding $C_{\gamma', T}\hookrightarrow C_{\gamma, T}$ for $\gamma<\gamma'$.
\end{proof}

\end{section}

\begin{section}{Membrane with purely temporal fluctuations}\label{sect:averaging}

In this section, we consider the scaling regime $\alpha=0$, $\beta=1$ in \eqref{eq:rsystem} and thus obtain the slow-fast system:
 \begin{align}
 dX^{\epsilon}_{t} & = F(X^{\epsilon}_{t}, \eta^{\epsilon}_{t}) dt + \sqrt{2 \Sigma (X^{\epsilon}_{t}, \eta^{\epsilon}_{t})} dB(t) \\
 d \eta^{\epsilon}_{t} & = - \frac{1}{\epsilon} \Gamma \eta^{\epsilon}_{t} dt + \sqrt{\frac{2}{\epsilon} \Gamma \Pi} dW_{t},\quad \eta^{\epsilon}_{0}\sim\rho_{\eta},
  \end{align}
where $B$ and $W$ are independent Brownian motions.

From classical stochastic averaging, see also \cite[Theorem 4]{dep}, we know that $X^\epsilon \Rightarrow X $ in distribution in $C([0,T], \mathbb{R}^2)$ as $\epsilon\to 0$. The limit $X$ is the solution of the averaged system
\[
dX_t = \overline{F} (X_t) dt + \sqrt{2 \overline{\Sigma}(X_t)} dB_t,
\]
with
\begin{align}
\overline{F}(x) &:= \int_{\mathbb{R}^{2K}} F(x,\eta) \rho_\eta (d \eta), \\
\overline{\Sigma}(x) &:= \int_{\mathbb{R}^{2K}} \Sigma(x,\eta) \rho_\eta(d\eta) ,
\end{align}
where $\rho_{\eta}(d\eta)$ is the invariant measure of $\eta$ given by (\ref{rho_eta}).
Utilizing the linear growth of $F$ in $\eta$ uniformly in $x$, cf. \eqref{eq:F},  boundedness of all moments of $\eta^{\epsilon}_{t}$ in $\epsilon, t$  and boundedness of $\Sigma$, cf. \eqref{eq:Sigma}, we can conclude that $(X^{\epsilon})_{\epsilon}$ satisfies the UCV condition (\ref{UCVcond}); see below for the detailed proof. Thus, according to Proposition \ref{prop:UCV}, the iterated Itô integrals of $X^{\epsilon}$ will converge to the iterated Itô integrals of the limit $X$. More precisely, the following theorem holds:
\begin{theorem} \label{thm:ucvlimit}
Let $\gamma< 1/2$  and $X^{\epsilon}$ and $X$ be as above. Let $\mathbb{X}^{\epsilon}_{s,t}:=\int_{s}^{t} (X^{\epsilon}_r-X^{\epsilon}_s)\otimes dX^{\epsilon}_r$ and $\mathbb{X}_{s,t}:=\int_{s}^{t} (X_r-X_s)\otimes dX_r$, where the stochastic integrals are understood in the Itô sense. Then it follows that
\begin{align}
(X^{\epsilon},\mathbb{X}^{\epsilon})\Rightarrow (X,\mathbb{X})
\end{align} in distribution in $\gamma$-Hölder rough path topology.
\end{theorem}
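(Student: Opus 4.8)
The plan is to combine the Kurtz--Protter theory --- which identifies the limit of the iterated Itô integrals in the uniform topology --- with the rough-path Kolmogorov criterion \cref{lem:rp-kol}, which upgrades the mode of convergence to the $\gamma$-Hölder rough path topology.

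First I would verify the UCV condition for $(X^{\epsilon})_{\epsilon}$. Writing $X^{\epsilon}=M^{\epsilon}+A^{\epsilon}$ with martingale part $M^{\epsilon}_{\cdot}=\int_{0}^{\cdot}\sqrt{2\Sigma(X^{\epsilon}_{r},\eta^{\epsilon}_{r})}\,dB_{r}$ and finite-variation part $A^{\epsilon}_{\cdot}=\int_{0}^{\cdot}F(X^{\epsilon}_{r},\eta^{\epsilon}_{r})\,dr$, the bracket $\langle M^{\epsilon,i}\rangle_{T}=\int_{0}^{T}(2\Sigma)_{ii}(X^{\epsilon}_{r},\eta^{\epsilon}_{r})\,dr$ is bounded by a deterministic constant thanks to \eqref{eq:Sigma}, while by the linear growth \eqref{eq:F} of $F$ in $\eta$ together with the uniform-in-$(\epsilon,t)$ bound on $\E[|\eta^{\epsilon}_{t}|]$ coming from stationarity and Gaussianity one has $\E[\mathrm{Var}_{1,[0,T]}(A^{\epsilon})]\leqslant C_{2}\int_{0}^{T}(1+\E[|\eta^{\epsilon}_{r}|])\,dr<\infty$ uniformly in $\epsilon$. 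Hence \eqref{UCVcond}, and thus UCV, holds. Applying \cref{prop:UCV} with $Y^{\epsilon}=X^{\epsilon}$ and the trivial joint convergence $(X^{\epsilon},X^{\epsilon})\Rightarrow(X,X)$ gives $\int_{0}^{\cdot}X^{\epsilon}_{r}\otimes dX^{\epsilon}_{r}\Rightarrow\int_{0}^{\cdot}X_{r}\otimes dX_{r}$ jointly with $X^{\epsilon}\Rightarrow X$ in $C([0,T])$; since $\mathbb{X}^{\epsilon}_{s,t}=\int_{0}^{t}X^{\epsilon}_{r}\otimes dX^{\epsilon}_{r}-\int_{0}^{s}X^{\epsilon}_{r}\otimes dX^{\epsilon}_{r}-X^{\epsilon}_{s}\otimes(X^{\epsilon}_{t}-X^{\epsilon}_{s})$, the continuous mapping theorem yields $(X^{\epsilon},\mathbb{X}^{\epsilon})\Rightarrow(X,\mathbb{X})$ in $C([0,T],\R^{2})\times C(\Delta_{T},\R^{2\times 2})$ equipped with the uniform topology.

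Next I would establish tightness of $(X^{\epsilon},\mathbb{X}^{\epsilon})_{\epsilon}$ in $C_{\gamma,T}$ via \cref{lem:rp-kol}, that is, I would verify the moment bounds \eqref{eq:kx} and \eqref{eq:kxx}. For \eqref{eq:kx}, splitting the increment of $X^{\epsilon}$ into drift and martingale parts, Burkholder--Davis--Gundy together with \eqref{eq:Sigma} controls the martingale part by $C_{p}|t-s|^{p/2}$, while \eqref{eq:F}, Hölder's inequality and the moment bounds on $\eta^{\epsilon}$ bound the drift part by $C_{p}|t-s|^{p}\leqslant C_{p}T^{p/2}|t-s|^{p/2}$; the same estimates together with Doob's inequality also give $\E[\sup_{r\in[s,t]}|X^{\epsilon}_{s,r}|^{p}]\leqslant C_{p}|t-s|^{p/2}$. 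For \eqref{eq:kxx}, decompose $\int_{s}^{t}X^{\epsilon}_{s,r}\,dX^{\epsilon}_{r}$ into the drift contribution $\int_{s}^{t}X^{\epsilon}_{s,r}F(X^{\epsilon}_{r},\eta^{\epsilon}_{r})\,dr$, estimated by pulling $\sup_{r\in[s,t]}|X^{\epsilon}_{s,r}|$ out of the integral and applying Cauchy--Schwarz with the moment bounds just established, and the martingale contribution $\int_{s}^{t}X^{\epsilon}_{s,r}\sqrt{2\Sigma(X^{\epsilon}_{r},\eta^{\epsilon}_{r})}\,dB_{r}$, whose bracket is bounded by $2C_{1}^{2}\int_{s}^{t}|X^{\epsilon}_{s,r}|^{2}\,dr\leqslant 2C_{1}^{2}|t-s|\sup_{r\in[s,t]}|X^{\epsilon}_{s,r}|^{2}$; tracking the exponents through Burkholder--Davis--Gundy then gives $\sup_{\epsilon}\E[|\int_{s}^{t}X^{\epsilon,i}_{s,r}\,dX^{\epsilon,j}_{r}|^{p/2}]\leqslant C_{p}|t-s|^{p/2}$. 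Since $X^{\epsilon}_{0}=x_{0}\in\mathbb{T}^{2}$ is bounded, \cref{lem:rp-kol} applies and yields tightness of $(X^{\epsilon},\mathbb{X}^{\epsilon})_{\epsilon}$ in $C_{\gamma,T}$.

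Finally I would conclude by a Prokhorov argument: by tightness, every subsequence of $(X^{\epsilon},\mathbb{X}^{\epsilon})_{\epsilon}$ has a further subsequence converging in distribution in $C_{\gamma,T}$; since the inclusion $C_{\gamma,T}\hookrightarrow C([0,T],\R^{2})\times C(\Delta_{T},\R^{2\times 2})$ is continuous and the limit in the latter space was identified to be $(X,\mathbb{X})$ in the first step, and since $(X,\mathbb{X})$ is almost surely an element of $C_{\gamma,T}$ (being the Itô lift of a diffusion with bounded coefficients), every such subsequential limit has the law of $(X,\mathbb{X})$; hence the full sequence converges, $(X^{\epsilon},\mathbb{X}^{\epsilon})\Rightarrow(X,\mathbb{X})$ in $C_{\gamma,T}$. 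I expect the main technical point to be the exponent bookkeeping in the iterated-integral estimate \eqref{eq:kxx} --- carrying the power $|t-s|^{p/2}$ through the chain of Burkholder--Davis--Gundy, Doob and Hölder inequalities --- whereas the UCV verification and the passage from the uniform to the Hölder rough path topology are essentially routine.
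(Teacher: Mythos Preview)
Your proposal is correct and follows essentially the same strategy as the paper: verify UCV via the bounds \eqref{eq:Sigma}, \eqref{eq:F} and stationarity of $\eta^{\epsilon}$, invoke \cref{prop:UCV} to identify the limit of the iterated integrals, establish the moment bounds of \cref{lem:rp-kol} for tightness in $C_{\gamma,T}$, and combine. The only cosmetic difference is in the iterated-integral estimate \eqref{eq:kxx}: the paper uses Minkowski's integral inequality to push the $L^{p}$-norm inside the time integral and then feeds in \eqref{eq:kx} pointwise in $r$, whereas you pull out $\sup_{r\in[s,t]}|X^{\epsilon}_{s,r}|$ and control it via Doob/BDG --- both routes yield the required $|t-s|^{p/2}$ bound, and your Prokhorov subsequence argument just makes explicit what the paper compresses into a single sentence.
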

\begin{proof}
We first prove the weak convergence of the iterated integrals $(\mathbb{X}^{\epsilon}_{0,t})$ in $\R^{2\times 2}$ for any $t\geqslant 0$ and then show tightness of $(X^{\epsilon},\mathbb{X}^{\epsilon})$ in $\gamma$-Hölder rough path topology.\\ 
The first aim is to apply \cref{prop:UCV} and show that the sequence $(X^{\epsilon})_{\epsilon}$ satisfies the UCV condition. We have that $X^{\epsilon}=A^{\epsilon}+M^{\epsilon}$ for 
\begin{align*}
A^\epsilon_t &:= \int_0^t  F(X^\epsilon_s, \eta^\epsilon_s)ds \\
M^\epsilon_t &:=  \int_0^t  \sqrt{2 \Sigma (X^{\epsilon}_s, \eta^\epsilon_s )} dB_s,
\end{align*} where $A^{\epsilon}$ is of finite variation and $M^{\epsilon}$ is a martingale. For $(X^{\epsilon})_{\epsilon}$ to satisfy the UCV condition, we thus have to show the bound (\ref{UCVcond}).

By boundedness of $\Sigma$ it is immediate that the expected quadratic variation of $M^{\epsilon}$ is also uniformly bounded in $\epsilon$. For the bound on the total variation of $A^{\epsilon}$ we use that (\ref{eq:F}) holds uniformly in $x\in\mathbb{T}^{2}$, such that:
\begin{align*}
\sup_\epsilon \E\big[ \mathrm{Var}_{1,[0,T]}(A^{\epsilon}) \big] &\leq  
\sup_\epsilon \E\bigg[ \int_0^T |F(X^\epsilon_s, \eta^\epsilon_s)| ds\bigg] \\
&\leq C\sup_\epsilon \E\bigg[\int_0^T (1 + |\eta^{\epsilon}_s|) ds\bigg] 
\\&= C(1+\E[\abs{\eta_{0}}]) T,
\end{align*}
where in the last equality, we used the stationarity of $\eta^{\epsilon}$. As we have weak convergence of $(X^{\epsilon})_\epsilon$ to $X$ in $C([0,T],\R^{2})$, this implies, according to  \cref{prop:UCV}, that we also have weak convergence of the Itô integrals 
$\mathbb{X}^{\epsilon}:=\int X^{\epsilon}\otimes dX^{\epsilon}$ to $\mathbb{X}:=\int X\otimes dX$ in $C(\Delta_{T},\R^{2\times 2})$.\\
To prove tightness in $\gamma$-Hölder rough path topology for $\gamma<1/2$, we utilize \cref{lem:rp-kol}.
Here \eqref{eq:kx} follows immediately from the linear growth of $F$ in $\eta$ and bounded moments of $\eta^{\epsilon}_{t}$ in $\epsilon,t$ by stationarity, as well as Burkholder-Davis-Gundy inequality for the martingale part and boundedness of $\Sigma$. Moreover, \eqref{eq:kxx} follows from \eqref{eq:kx} and the estimate
\begin{align*}
\E\bigg[\abs[\bigg]{\int_{s}^{t}X^{\epsilon, i}_{s,r}dX^{\epsilon, j}_{r}}^{p/2}\bigg]
&\lesssim
\E\bigg[\abs[\bigg]{\int_{s}^{t}X^{\epsilon, i}_{s,r}dM^{\epsilon, j}_{r}}^{p/2}\bigg]+\E\bigg[\abs[\bigg]{\int_{s}^{t}X^{\epsilon, i}_{s,r}dA^{\epsilon, j}_{r}}^{p/2}\bigg]\\
&\lesssim \E\bigg[\paren[\bigg]{\int_{s}^{t}\abs{X^{\epsilon, i}_{s,r}}^{2}dr}^{p/4}\bigg]+
\E\bigg[\paren[\bigg]{\int_{s}^{t}\abs{X^{\epsilon, i}_{s,r}F^{j}(X^{\epsilon}_{r},\eta^{\epsilon}_{r})}dr}^{p/2}\bigg]\\
&\lesssim \paren[\bigg]{\int_{s}^{t}\E[\abs{X_{s,r}^{\epsilon,i}}^{p/2}]^{4/p}dr}^{p/4}+\paren[\bigg]{\int_{s}^{t}\E[\abs{X_{s,r}^{\epsilon,i}F^{j}(X_{r}^{\epsilon},\eta_{r}^{\epsilon})}^{p/2}]^{2/p}dr}^{p/2}\\
&\lesssim\abs{t-s}^{(2\times\frac{1}{2}+1)\times\frac{p}{4}}+\paren[\bigg]{\int_{s}^{t}\E[\abs{X_{s,r}^{\epsilon,i}}^{p}]^{1/p}\E[\abs{F^{j}(X_{r}^{\epsilon},\eta_{r}^{\epsilon})}^{p}]^{1/p}dr}^{p/2}\\
&\lesssim \abs{t-s}^{p/2}+\paren[\bigg]{\int_{s}^{t}\E[\abs{X_{s,r}^{\epsilon,i}}^{p}]^{1/p}\E[(1+\abs{\eta_{r}^{\epsilon}})^{p}]^{1/p}dr}^{p/2}\\
&\lesssim \abs{t-s}^{p/2}+\paren[\bigg]{\int_{s}^{t}\E[\abs{X_{s,r}^{\epsilon,i}}^{p}]^{1/p}dr}^{p/2}\\
&\lesssim \abs{t-s}^{p/2}+\abs{t-s}^{p/4+p/2}\lesssim_{T}\abs{t-s}^{p/2}
\end{align*} 
using the Burkholder-Davis-Gundy inequality for the martingale part, boundedness of $\Sigma$ in the second line and the generalized Minkowski's inequality for integrals for both summands in the third line (and the linear growth of $F$ and stationarity of $\eta$).\\ 
\noindent Combining distributional convergence of $(X^{\epsilon},\mathbb{X}^{\epsilon})_\epsilon$ to $(X,\mathbb{X})$ in $C([0,T],\R^{d})\times C(\Delta_{T},\R^{d\times d})$ and tightness in $C_{\gamma,T}$, we conclude on distributional convergence in $\gamma$-Hölder rough path topology for $\gamma<1/2$.
\end{proof}
\end{section}

\begin{section}{Membrane with temporal fluctuations twice as fast as spatial fluctuations}\label{sect:hom1}

In this section, we consider the scaling regime $\alpha=1$, $\beta=2$ in \eqref{eq:rsystem}, that is, temporal fluctuations occur twice as fast as spatial ones. We introduce the fast process 
\begin{align*}
Y^\epsilon_t:= \frac{X^\epsilon_t}{\epsilon} \mod \mathbb{T}^2.
\end{align*} 
Then the general SDE system can be written as 

 \begin{subequations}\label{eq:xye}
\begin{empheq}[left=\empheqlbrace]{align}
 dX^\epsilon_t & = \frac{1}{\epsilon}F(Y^\epsilon_t, \eta^\epsilon_t ) dt + \sqrt{2 \Sigma (Y^\epsilon_t, \eta^\epsilon_t )} dB_t,  \\
 dY^\epsilon_t & = \frac{1}{\epsilon^2}F(Y^\epsilon_t, \eta^\epsilon_t ) dt + \sqrt{\frac{2}{\epsilon^2} \Sigma (Y^\epsilon_t, \eta^\epsilon_t )} dB_t, \\
 d \eta^\epsilon_t & = - \frac{1}{\epsilon^2} \Gamma \eta^\epsilon_t dt + \sqrt{\frac{2}{\epsilon^2} \Gamma \Pi} dW_t, 
 \end{empheq}
\end{subequations}
for independent Brownian motions $B$ and $W$, where $B$ is a two-dimensional and $W$ is a $2K$-dimensional standard Brownian motion.\\
Utilizing Itô's formula, one can easily check that on smooth, compactly supported functions $f\in C^{\infty}_{c}(\mathbb{T}^{2}\times\R^{2K},\R)$, the infinitesimal generator of the fast process $(Y^\epsilon, \eta^\epsilon)$ is $\epsilon^{-2} \mathcal{G}$, where 
\begin{align}
\mathcal{G} = \mathcal{L}_0 + \mathcal{L}_\eta
\end{align} for
\begin{align}\label{eq:L0}
\mathcal{L}_{0}f(y,\eta)=F(y,\eta)\cdot\nabla_{y} f(y,\eta)+\Sigma (y,\eta):\nabla_{y}\nabla_{y}f(y,\eta),
\end{align} 
which is the generator of $Y$ (for fixed $\eta$) and $\mathcal{L}_{\eta}$ 
 is the generator of the Ornstein-Uhlenbeck process $\eta$, given by (\ref{OUgen}), cf. also \cite[section 5.3]{duncan}. We also write $\mathcal{L}_{0}(\eta)$ to denote the operator $\mathcal{L}_{0}$ acting on functions $f:\mathbb{T}^{2}\to\R$, stressing the dependence on fixed $\eta\in\R^{2K}$. The reason for introducing the fast process $Y^{\epsilon}$ is that the drift term of $X^{\epsilon}$ is given as an (unbounded) additive functional of the Markov process $(Y^{\epsilon},\eta^{\epsilon})$. Moreover, we have the equality in law, $(Y_t,\eta_{t})_{t\geqslant 0}\stackrel{d}{=}(Y^{\epsilon}_{\epsilon^{2} t},\eta^{\epsilon}_{\epsilon^{2} t})_{t\geqslant 0}$, where $(Y,\eta)$ is the Markov process with generator $\mathcal{G}$. 
 
One can show (cf. \cite[Prop. 5.3.1]{duncan}) that there exists a unique invariant measure $\rho$ for the Markov process $(Y,\eta)$, whose density is the unique, normalized solution of 
\begin{equation}\label{eq:rho}
\mathcal{G}^* \rho = 0,
\end{equation}
$\mathcal{G}^{\ast}$ being the adjoint operator of $\mathcal{G}$ with respect to $L^{2}(dyd\eta)$. As $\rho$ is the unique invariant measure, it is in particular ergodic for $(Y,\eta)$. Furthermore, we can extend the semigroup $(P^{(Y,\eta)}_{t})_{t\geqslant 0}$ of the Markov process $(Y,\eta)$, with $P_{t}^{(Y,\eta)}f(y,\eta)=\E[f(Y_{t},\eta_{t})\mid (Y_{0},\eta_{0})=(y,\eta)]$ for $f\in C^{\infty}_{c}(\mathbb{T}^{d}\times\R^{2K})$, uniquely to a strongly continuous contraction semigroup on $L^{2}(\rho)$ (that is possible by invariance of $\rho$, cf. \cite[Theorem 1, p. 381]{Yosida}) and define the generator $\mathcal{G}: dom(\mathcal{G})\subset L^{2}(\rho)\to L^{2}(\rho)$ with $dom(\mathcal{G})=\{u\in L^{2}(\rho)\mid\mathcal{G}u\in L^{2}(\rho)\}$ and $\mathcal{G}u:=\lim_{t\to 0}t^{-1}(P_{t}^{(Y,\eta)}u-u)$ with limit in $L^{2}(\rho)$.\\
Let us define
\begin{align*}
V(\eta):=1+\frac{1}{2}\abs{\eta}^{2}.
\end{align*}

\noindent Then, according to  the proof of \cite[Prop. 5.3.1]{duncan}, $V$ is a Lyapunov function for the fast process $(Y^{\epsilon},\eta^{\epsilon})$ and we have the pointwise spectral-gap-type estimates of the form
\begin{equation}\label{eq:p-sp}
 \abs{P_{t}^{(Y,\eta)}f(y,\eta)-\int fd\rho}^{2}\leqslant K e^{-ct}\abs{V(\eta)}^{2} \quad  \text{ for all } t\geqslant 0
\end{equation} for constants $K,c>0$ (not depending on $f$) 
and for all $f:\mathbb{T}^{2}\times\R^{2K}\to\R$ such that $\abs{f(y,\eta)}\leqslant V(\eta)$, $(y,\eta)\in\mathbb{T}^{2}\times\R^{2K}$. If we integrate the pointwise inequality \eqref{eq:p-sp} over $(y,\eta)$ with respect to $\rho$, we obtain the $L^{2}(\rho)$-spectral-gap-type estimates for all such $f$, assuming $V\in L^{2}(\rho)$,
\begin{equation}\label{eq:sp-l-2}
 \norm{P_{t}^{(Y,\eta)}f-\int fd\rho}_{L^{2}(\rho)}^{2}\leqslant K e^{-ct}\norm{V}_{L^{2}(\rho)}^{2} \quad  \text{ for all } t\geqslant 0.
\end{equation}
We will in particular apply the spectral gap estimates  to $f=F$, which satisfies \eqref{eq:F}.\\
Let us, similarly as in the previous section, define the $H^{1}$ space with respect to the generator $\mathcal{G}$ (notation: $\mathcal{G}^{S}:=\frac{1}{2}(\mathcal{G}+\mathcal{G}^{\star})$, $\mathcal{G}^{\star}$ being the $L^{2}(\rho)$-adjoint),
\begin{align*}
H^{1}(\rho):=\{u\in dom(\mathcal{G})\mid \langle (-\mathcal{G})u,u\rangle_{\rho}=\langle (-\mathcal{G}^{S})u,u\rangle_{\rho}<\infty\}.
\end{align*}
The scalar product in $H^{1}(\rho)$ is given by $\langle f, g\rangle_{H^{1}(\rho)}=\langle (-\mathcal{G}^{S})f,g\rangle_{H^{1}(\rho)}$.\\
Then, as a consequence of the spectral gap estimates, we can solve the Poisson equation
\begin{equation}\label{eq:G-P-eq}
(-\mathcal{G})u=g 
\end{equation} explicitly with right-hand side $g$ that has mean zero under $\rho$, $\langle g\rangle_{\rho}=0$, and satisfies $\abs{g}\leqslant V$ with $V\in L^{2}(\rho)$. The unique solution $u \in H^{1}(\rho)$ is given by $u=\int_{0}^{\infty} P_{t}^{(Y,\eta)}gdt\in L^{2}(\rho) $.
In fact, for our tightness arguments, we will need a stronger integrability condition on the solution $u$ (and $\nabla_{\eta} u,\nabla_{y}u$), that is given by the following proposition.
\begin{proposition}\label{prop:w1p}
Let $p\geqslant 2$ and let $g\in C^{\infty}(\mathbb{T}^{2}\times\R^{2K},\R^{2})$ with 
\begin{align}\label{eq:growth-cond}
\abs{g(y,\eta)}\leqslant V(\eta)
\end{align} (i.p. $g\in L^{p}(\rho)$) and with $\langle g\rangle_{\rho}=\int_{\mathbb{T}^{2}\times\R^{2K}}g(y,\eta)\rho(d(y,\eta))=0$. Then the Poisson equation
\begin{align*}
(-\mathcal{G})u=g
\end{align*} has a unique strong solution $u\in C^{\infty}(\mathbb{T}^{2}\times\R^{2K},\R^{2})$ with the property that $\langle u\rangle_{\rho}=0$. Moreover, there exists a constant $C>0$, such that the solution satisfies $\abs{u(y,\eta)}\leqslant C V(\eta)$ and
\begin{align}
\abs{\nabla_{(y,\eta)}u(y,\eta)}\leqslant\abs{\nabla_{y} u(y,\eta)}+\abs{\nabla_{\eta} u(y,\eta)}\leqslant  2C V(\eta).
\end{align}
In particular, it follows that $u\in W^{1,p}(\rho)=\{u\in L^{p}(\rho)\mid \nabla_{(y,\eta)}u\in L^{p}(\rho)\}$.
\end{proposition}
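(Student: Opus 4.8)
The natural candidate is the resolvent formula $u:=\int_0^\infty P_t g\,\mathd t$, where I write $P_t:=P_t^{(Y,\eta)}$ for the semigroup of $(Y,\eta)$. First I would check convergence and the pointwise bound on $u$: since $\langle g\rangle_\rho=0$ and $\abs g\leqslant V$, the pointwise spectral gap \eqref{eq:p-sp} applied to $f=g$ gives $\abs{P_t g(y,\eta)}\leqslant\sqrt K\,e^{-ct/2}V(\eta)$, so the integral converges absolutely and locally uniformly and $\abs{u(y,\eta)}\leqslant\tfrac{2\sqrt K}{c}V(\eta)=:C\,V(\eta)$. To identify $u$ as the solution I would truncate: for $u_\delta:=\int_\delta^\infty P_t g\,\mathd t$ one has $\mathcal{G}u_\delta=\int_\delta^\infty\partial_t P_t g\,\mathd t=-P_\delta g$ (the integral of $\mathcal{G}P_tg=\partial_t P_t g$ converging in $L^2(\rho)$, its integrand having $L^2(\rho)$-norm $\lesssim 1/t$ near $t=\delta$ by analyticity of the semigroup and decaying exponentially at infinity); letting $\delta\downarrow0$, $u_\delta\to u$ and $P_\delta g\to g$ in $L^2(\rho)$, so closedness of $\mathcal{G}$ yields $u\in dom(\mathcal{G})$ with $(-\mathcal{G})u=g$, while $\langle u\rangle_\rho=\int_0^\infty\langle P_tg\rangle_\rho\,\mathd t=0$ by invariance of $\rho$. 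Uniqueness in the class of $L^2(\rho)$-solutions with vanishing mean (which contains every $O(V)$-solution, as $V\in L^2(\rho)$) follows from ergodicity: a difference $w$ of two such solutions satisfies $\mathcal{G}w=0$, hence $\partial_t P_t w=0$ and $P_t w\equiv w$; by \eqref{eq:sp-l-2} the left side converges to $\langle w\rangle_\rho=0$, so $w\equiv0$.

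\emph{Smoothness.} The operator $\mathcal{G}=\mathcal{L}_0+\mathcal{L}_\eta$ has $C^\infty$ coefficients and principal part $\Sigma:\nabla_y\nabla_y+\Pi\Gamma:\nabla_\eta\nabla_\eta$, which is elliptic on $\mathbb{T}^2\times\R^{2K}$ (uniformly so on compacts) because $\Sigma$ and $\Pi\Gamma$ are positive definite. Since $u\in L^2(\rho)\subset L^1_{\mathrm{loc}}$ is a distributional solution of $\mathcal{G}u=-g$ with $g\in C^\infty$, a standard Sobolev-space bootstrap followed by Sobolev embedding gives $u\in C^\infty(\mathbb{T}^2\times\R^{2K})$; in particular $u$ is a strong solution.

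\emph{Gradient bound — the crux.} Because the hypothesis controls only $\abs g\leqslant V$ and not $\nabla g$, I would not differentiate the Poisson equation directly but instead combine the pointwise bound on $u$ with interior a priori estimates that do not cost a derivative on the right-hand side: on a ball $B_r(z_0)$ (a fixed radius suffices, $y$ living on the compact torus) with leading coefficients $\mathrm{diag}(\Sigma,\Pi\Gamma)$ continuous, lower-order coefficients $(F,-\Gamma\eta)$ bounded on $B_r(z_0)$, and ellipticity constant $\gtrsim(1+\abs{\eta_0}^2)^{-1}$, the interior $W^{2,p}$ estimate gives $\norm{u}_{W^{2,p}(B_{r/2}(z_0))}\leqslant C(z_0)\big(\norm{u}_{L^p(B_r(z_0))}+\norm{g}_{L^p(B_r(z_0))}\big)\lesssim C(z_0)\,V(\eta_0)$, and Sobolev embedding $W^{2,p}\hookrightarrow C^1$ (for $p$ large) then yields $\abs{\nabla_{(y,\eta)}u(z_0)}\lesssim C(z_0)\,V(\eta_0)$; alternatively a Bismut--Elworthy--Li integration-by-parts (legitimate since $\mathrm{diag}(2\Sigma,2\Gamma\Pi)$ is invertible) gives $\abs{\nabla_{(y,\eta)}P_t g(z)}\lesssim t^{-1/2}\,\E_z[V(\eta_t)^2]^{1/2}\,W(z)$ for $t\in(0,1]$, $\nabla P_t g=\nabla P_1(P_{t-1}g)$ with $\abs{P_{t-1}g}\lesssim e^{-c(t-1)/2}V$ for $t\geqslant1$, and integrating $\nabla u=\int_0^\infty\nabla P_t g\,\mathd t$ using $\E_z[V(\eta_t)^2]^{1/2}\lesssim V(\eta)$ for the Ornstein--Uhlenbeck component. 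The genuine difficulty in either route is to keep the $\eta$-dependence of the resulting bound at the \emph{sharp} order $V(\eta)$ asserted, rather than a higher power: the ellipticity of $\mathcal{G}$ degenerates like $(1+\abs\eta^2)^{-1}$ in the $y$-block, the drift $\Gamma\eta$ grows, and $\nabla_y F$ grows, so the naive Schauder/Grönwall constants $C(z_0)$, $W(z_0)$ are polynomials in $\abs{\eta_0}$ of too high a degree. Closing the estimate should exploit the precise Helfrich structure — $\Sigma=I-\tfrac{\nabla_y h\otimes\nabla_y h}{1+\abs{\nabla_y h}^2}$ is bounded with $\Sigma\nabla_y h$ bounded (Sherman--Morrison), $\nabla_\eta\Sigma$ is bounded, the $\eta$-Jacobian of the flow equals the contraction $e^{-\Gamma t}$ — together with the exponential-in-time decay from the spectral gap \eqref{eq:p-sp} (to absorb the short-time $t^{-1/2}$ singularity) and, if needed, a bootstrap feeding a first crude polynomial bound back into the representation. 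Once $\abs{\nabla_{(y,\eta)}u}\leqslant 2C\,V(\eta)$ is in hand, $u\in W^{1,p}(\rho)$ is immediate because $V\in L^p(\rho)$ for every $p$.
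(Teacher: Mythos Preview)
Your approach coincides with the paper's in all essentials. The resolvent formula $u=\int_0^\infty P_t g\,\mathd t$, the pointwise bound $\abs{u}\lesssim V$ from the spectral gap \eqref{eq:p-sp}, and smoothness by elliptic regularity are all there (the paper cites \cite[Proposition~A.3.1]{duncan} for the latter rather than bootstrapping directly). For the gradient the paper takes precisely your route~(a): it invokes the interior $W^{2,p}$ estimate \cite[(9.40)]{gt} on the ball $B_{(y,\eta),2}$ followed by Sobolev embedding for $p>d+2K$, following \cite[Theorem~1, part~(e)]{pv}, and simply writes
\[
\abs{\nabla_{(y,\eta)}u(y,\eta)}\leqslant C\bigl(\norm{u}_{L^{p}(B_{(y,\eta),2})}+\norm{\mathcal{G}u}_{L^{p}(B_{(y,\eta),2})}\bigr)\leqslant 2C\,V(\eta).
\]
The paper does \emph{not} discuss the $\eta$-dependence of the constant $C$ that you flag --- in \cite{pv} the diffusion matrix is uniformly elliptic, whereas here $\Sigma$ degenerates and the first-order coefficients grow linearly --- it just asserts a uniform $C$. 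Your Bismut--Elworthy--Li alternative and the Helfrich-specific structural observations are not used. So you have identified the paper's argument and have in fact been more scrupulous about its weak point; note, though, that for the applications in \cref{sect:hom1} any polynomial bound $\abs{\nabla u}\lesssim (1+\abs\eta)^m$, which the interior estimate certainly delivers once one tracks the $\eta$-dependence of the ellipticity and coefficient constants, already gives $\nabla u\in L^p(\rho)$ for every $p$, and that is all that is ever used downstream.
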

\begin{proof}
The solution $u=\int_{0}^{\infty}P^{(Y,\eta)}_{t}gdt$ is smooth, as $g$ is assumed to be smooth, cf. also \cite[Proposition A.3.1]{duncan}. It satisfies an analogue growth bound as $g$ by the pointwise spectral gap estimates \eqref{eq:p-sp} with constant $C=K\int_{0}^{\infty}e^{-ct}dt\in (0,\infty)$, in particular $u\in L^{p}(\rho)$ for any $p\geqslant 1$. For the bound on the derivative, we proceed as in part (e) of the proof of \cite[Theorem 1]{pv} applying Sobolev embedding and the estimate $(9.40)$ from \cite{gt} and the bound on $g$ and $u$, such that for $p>d+2K$ (notation: $B_{x,R}=\{z\in\R^{d}\times\R^{2K}\mid\abs{z-x}\leqslant R\}$)
\begin{align*}
\abs{\nabla_{(y,\eta)}u(y,\eta)}\leqslant C (\norm{u}_{L^{p}(B_{(y,\eta),2})}+\norm{\mathcal{G}u}_{L^{p}(B_{(y,\eta),2})})\leqslant 2C V(\eta).
\end{align*}
Notice also that, compared to \cite{pv} in our situation, we have compactness in the $y$ variable and the bound on $g$ and $u,\nabla u$ is uniform in $y\in\mathbb{T}^{d}$.
\end{proof}

\noindent In what follows, we will always assume the system (\ref{eq:xye})  starts in stationarity , i.e. $(Y^{\epsilon}_{0},\eta^{\epsilon}_{0})\sim\rho$. In \cite[Theorem 7]{dep} the authors proved the homogenization result for the process $X^{\epsilon}$ itself (cf. also \cite[chapter 5]{duncan}), namely 
\begin{align*}
X^{\epsilon} \Rightarrow \sqrt{2D}Z,
\end{align*} 
where the convergence is in distribution in $C([0,T],\R^{2})$ when $\epsilon\to 0$ with $Z$ being a standard two-dimensional Brownian motion and 
\[
D= \int (I + \nabla_y \chi(y,\eta))^T\Sigma(y,\eta)(I + \nabla_y \chi(y,\eta)) \rho(dy, d\eta) + \int \nabla_\eta \chi(y,\eta)^T \Gamma \Pi \nabla_\eta \chi(y,\eta) \rho(dy, d\eta).
\]
Here $I\in\R^{2\times 2}$ is the identity matrix and $\chi$ is the solution of the Poisson equation $(-\mathcal{G})\chi=F$. 
To solve the Poisson equation with right-hand-side $F$, we furthermore need that $F$ is centered with respect to $\rho$, which is stated in the following lemma, and also in  \cite[Proposition 5.3.4]{duncan}.\\
In order to obtain the homogenization result for the rough path lift of the process $X^{\epsilon}$, we will use martingale methods (cf. in the book \cite[Ch. 2]{klo}) applied to the stationary, ergodic Markov process $(Y^{\epsilon},\eta^{\epsilon})$ started in $\rho$. In addition we will exploit the decomposition of the additive functional  in terms of Dynkin's martingale and the boundary term involving the solution of the Poisson equation \eqref{eq:G-P-eq}.

\begin{lemma}\label{cor:meanzero}\cite[Proposition 5.3.4]{duncan}\\
For $F$ from \eqref{eq:defF} and the invarinat probability measure $\rho$ for $\mathcal{G}$, the following centering condition holds true
\begin{align}
\int_{\mathbb{T}^{2}\times\R^{2K}}F(y,\eta)\rho(y,\eta)d(y,\eta)=0.
\end{align}
\end{lemma}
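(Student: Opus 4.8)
The plan is to show that $F$ integrates to zero against $\rho$ by exploiting the special structure of $F$ as a divergence, together with the defining equation $\mathcal{G}^*\rho = 0$ for the invariant density. Recall from \eqref{eq:defF} that $F(y,\eta) = |g|^{-1/2}\nabla_y\cdot(\sqrt{|g|}\,g^{-1})$, so that $\sqrt{|g|}\,F = \nabla_y\cdot(\sqrt{|g|}\,\Sigma)$ with $\Sigma = g^{-1}$. The key observation is that the Laplace–Beltrami part $\mathcal{L}_0$ of the generator, written in divergence form as in the line following \eqref{eq:defF}, satisfies
\[
\sqrt{|g|(y,\eta)}\,\mathcal{L}_0 f(y,\eta) = \nabla_y\cdot\bigl(\sqrt{|g|(y,\eta)}\,\Sigma(y,\eta)\nabla_y f(y,\eta)\bigr),
\]
so $\mathcal{L}_0$ is symmetric on $L^2(\mathbb{T}^2, \sqrt{|g|}\,dy)$ for each fixed $\eta$. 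First I would identify the invariant density explicitly: I claim $\rho(y,\eta) = Z^{-1}\sqrt{|g|(y,\eta)}\,\rho_\eta(\eta)$ for a normalizing constant $Z = \int_{\mathbb{T}^2\times\R^{2K}}\sqrt{|g|(y,\eta)}\,\rho_\eta(d\eta)\,dy$. To verify this one checks $\mathcal{G}^*\rho = (\mathcal{L}_0^* + \mathcal{L}_\eta^*)\rho = 0$: the term $\mathcal{L}_0^*(\sqrt{|g|}\,\rho_\eta)$ vanishes because integrating $\sqrt{|g|}\,\mathcal{L}_0 f$ against $dy$ gives $\int \nabla_y\cdot(\sqrt{|g|}\,\Sigma\nabla_y f)\,dy = 0$ by periodicity, so $\mathcal{L}_0^*(\sqrt{|g|}) = 0$; and $\mathcal{L}_\eta^*\rho_\eta = 0$ is exactly the invariance of $\rho_\eta$ recorded after \eqref{OUgen}, while $\mathcal{L}_\eta$ acting in $\eta$ commutes appropriately with the $y$-dependence (one must be slightly careful since $\sqrt{|g|}$ depends on $\eta$, but $\mathcal{L}_\eta^*$ is a differential operator in $\eta$ and the cross terms can be handled — alternatively, invoke uniqueness: $\rho$ is the unique normalized solution of \eqref{eq:rho} by \cite[Prop. 5.3.1]{duncan}, so it suffices to exhibit any solution).

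Once the form of $\rho$ is established, the computation is immediate:
\[
\int_{\mathbb{T}^2\times\R^{2K}} F(y,\eta)\,\rho(d(y,\eta))
= Z^{-1}\int_{\R^{2K}}\!\!\Bigl(\int_{\mathbb{T}^2} F(y,\eta)\sqrt{|g|(y,\eta)}\,dy\Bigr)\rho_\eta(d\eta)
= Z^{-1}\int_{\R^{2K}}\!\!\Bigl(\int_{\mathbb{T}^2}\nabla_y\cdot(\sqrt{|g|}\,\Sigma)(y,\eta)\,dy\Bigr)\rho_\eta(d\eta),
\]
and the inner integral vanishes for every fixed $\eta$ because it is the integral of a divergence of a smooth periodic vector field over the torus $\mathbb{T}^2$. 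Hence the whole expression is zero, which is the claim.

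The main obstacle is the first step — pinning down the invariant density $\rho$ and rigorously justifying $\mathcal{G}^*\rho = 0$ — since $\mathcal{G}^*$ mixes the $y$- and $\eta$-derivatives and $\sqrt{|g|}$ depends on both variables, so one cannot completely decouple the two operators. The cleanest route around this is to not compute $\mathcal{G}^*(\sqrt{|g|}\rho_\eta)$ by brute force but instead test against smooth compactly supported $f$: show $\int (\mathcal{L}_0 + \mathcal{L}_\eta)f \cdot \sqrt{|g|}\,\rho_\eta\,dy\,d\eta = 0$ for all such $f$, using the self-adjointness of $\mathcal{L}_0$ on $L^2(\sqrt{|g|}\,dy)$ for the first piece and integration by parts in $\eta$ (together with $\mathcal{L}_\eta^*\rho_\eta = 0$) for the second, being careful with the $\eta$-derivatives that land on $\sqrt{|g|}$. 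Then conclude by the uniqueness statement in \cite[Prop. 5.3.1]{duncan}. If one prefers to bypass identifying $\rho$ altogether, an alternative is to argue directly that $\langle F\rangle_\rho = \langle \mathcal{G}\chi'\rangle_\rho = 0$ for a suitable $\chi'$, but that is circular here since solvability of the Poisson equation for $F$ is precisely what requires the centering; so the divergence-structure argument via the explicit density is the natural one.
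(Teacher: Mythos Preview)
Your argument has a genuine gap: the proposed invariant density $\rho(y,\eta)=Z^{-1}\sqrt{|g|(y,\eta)}\,\rho_\eta(\eta)$ is \emph{not} the invariant density of $\mathcal{G}=\mathcal{L}_0+\mathcal{L}_\eta$. You are conflating the $(\alpha,\beta)=(1,1)$ regime of \cref{sect:hom2}, where $\sqrt{|g|}\,dy$ is invariant for $\mathcal{L}_0(\eta)$ alone (and indeed $\int F\sqrt{|g|}\,dy=0$ trivially), with the $(\alpha,\beta)=(1,2)$ regime of \cref{sect:hom1}, where the relevant generator is the \emph{sum} $\mathcal{L}_0+\mathcal{L}_\eta$. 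The paper's \cref{lem:rhostr} shows $\rho(y,\eta)=g_\eta(y)f(\eta)$ with $g_\eta$ solving the coupled equation $(\mathcal{L}_0^\ast+\mathcal{L}_\eta)g_\eta=0$, and the remark after \cref{cor:limit1} explicitly says that $g_\eta$ ``remains non-explicit''.

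Concretely, the ``cross terms'' you flag cannot be handled: using the identity derived in the proof of \cref{lem:rhostr}, $\mathcal{L}_\eta^\ast(h\rho_\eta)=\rho_\eta\,\mathcal{L}_\eta h$, one gets
\[
\mathcal{G}^\ast\bigl(\sqrt{|g|}\,\rho_\eta\bigr)=\rho_\eta\,\mathcal{L}_0^\ast\sqrt{|g|}+\rho_\eta\,\mathcal{L}_\eta\sqrt{|g|}=\rho_\eta\,\mathcal{L}_\eta\sqrt{|g|(y,\eta)},
\]
and since $|g|(y,\eta)=1+|\nabla_y h(y,\eta)|^2$ depends nontrivially on $\eta$ (indeed $\nabla_y h$ is linear in $\eta$), the function $\mathcal{L}_\eta\sqrt{|g|}$ does not vanish. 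So $\sqrt{|g|}\,\rho_\eta$ fails to solve $\mathcal{G}^\ast\rho=0$, and your divergence computation, while correct for the inner integral against $\sqrt{|g|}\,dy$, is performed against the wrong measure. The centering $\int F\,d\rho=0$ requires a different argument that copes with the non-explicit $g_\eta$; this is the content of \cite[Proposition~5.3.4]{duncan} cited in the statement.
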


\noindent We prove in the following lemma, that the density $\rho$ that solves (\ref{eq:rho}) is given by $\rho(y,\eta)=g_{\eta}(y)f(\eta)$, where $f$ is the density of the Normal distribution invariant for $\eta$ and $g$ solves the equation \eqref{eq:g_eta} below.
\begin{lemma}\label{lem:rhostr}
Let $\rho$ be the probability measure with the density  denoted also by $\rho$, such that it solves $\mathcal{G}^{\ast}\rho=0$. Moreover, let  $g_{\eta}(y)$ be the unique solution, satisfying $\int_{\mathbb{T}^{2}}g_{\eta}(y)dy=1$ and $g_{\eta}(y)\geqslant 0$, to the equation
\begin{align}\label{eq:g_eta}
(\mathcal{L}_{0}^{\ast}+\mathcal{L}_{\eta})g_{\eta}(y)=0
\end{align} 
for the adjoint operator $\mathcal{L}_{0}^{\ast}=\mathcal{L}_{0}^{\ast}(\eta)$ of $\mathcal{L}_{0}=\mathcal{L}_{0}(\eta)$ with respect to $L^{2}(dy)$. Then  the density $\rho$ fulfills the disintegration formula
\begin{align*}
\rho(y,\eta)=g_{\eta}(y)f(\eta),
\end{align*}
where
\begin{align*}
f(\eta)=\frac{1}{2\pi\sqrt{\abs{\Pi}}}\exp\left(-\frac{1}{2}\eta^{T}\Pi^{-1}\eta\right).
\end{align*} 
In particular, the marginal distribution of $\rho$ in the $\eta$-variable is the normal distribution $N(0,\Pi)$.
\end{lemma}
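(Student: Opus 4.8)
The plan is to verify directly that the product $g_\eta(y)f(\eta)$ solves the stationary Fokker--Planck equation $\mathcal{G}^\ast\rho=0$, and then invoke uniqueness of the invariant density (guaranteed by \cite[Prop.~5.3.1]{duncan}, together with the normalization $\int g_\eta\,dy=1$ and $\int f\,d\eta=1$, which makes $g_\eta f$ a probability density) to conclude that it must coincide with $\rho$. The starting point is to compute the $L^2(dy\,d\eta)$-adjoint $\mathcal{G}^\ast$ of $\mathcal{G}=\mathcal{L}_0+\mathcal{L}_\eta$. Since $\mathcal{L}_0=\mathcal{L}_0(\eta)$ acts only in the $y$-variable and $\mathcal{L}_\eta$ only in the $\eta$-variable, the adjoint splits as $\mathcal{G}^\ast=\mathcal{L}_0^\ast(\eta)+\mathcal{L}_\eta^\ast$, where $\mathcal{L}_0^\ast(\eta)$ is the $L^2(dy)$-adjoint appearing in \eqref{eq:g_eta} and $\mathcal{L}_\eta^\ast$ is the Lebesgue adjoint of the Ornstein--Uhlenbeck generator. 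I would then apply $\mathcal{G}^\ast$ to $g_\eta(y)f(\eta)$ and organize the resulting terms according to which operator acts on which factor.

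The key step is the following algebraic identity. We write
\begin{align*}
\mathcal{G}^\ast\big(g_\eta(y)f(\eta)\big)
&=f(\eta)\,\mathcal{L}_0^\ast(\eta)g_\eta(y)+\mathcal{L}_\eta^\ast\big(g_\eta(y)f(\eta)\big).
\end{align*}
For the second term, recall from the Preliminaries that $\mathcal{L}_\eta^\ast f=0$ because $f=\rho_\eta$ is the invariant density of $\eta$. Expanding $\mathcal{L}_\eta^\ast$ (a second-order operator in $\eta$) applied to the product $g_\eta(y)f(\eta)$ via the Leibniz rule produces $g_\eta\,\mathcal{L}_\eta^\ast f$ (which vanishes) plus terms in which at least one $\eta$-derivative hits $g_\eta$; a short computation shows the collection of these remaining terms is exactly $f(\eta)\,\mathcal{L}_\eta g_\eta(y)$ — that is, the drift sign flips and the cross terms cancel precisely because $f$ is Gaussian with the OU-compatible covariance, so that $f^{-1}\mathcal{L}_\eta^\ast(g f)=\mathcal{L}_\eta g$. (Concretely, one uses $\nabla_\eta f=-\Pi^{-1}\eta\,f$ and the explicit form \eqref{OUgen} of $\mathcal{L}_\eta$.) Combining, we get
\begin{align*}
\mathcal{G}^\ast\big(g_\eta f\big)=f(\eta)\Big(\mathcal{L}_0^\ast(\eta)+\mathcal{L}_\eta\Big)g_\eta(y)=0
\end{align*}
by \eqref{eq:g_eta}. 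Since $g_\eta f\geq 0$ and integrates to one, it is a stationary probability density, hence equals $\rho$ by uniqueness. The marginal statement then follows by integrating $\rho(y,\eta)=g_\eta(y)f(\eta)$ over $y\in\mathbb{T}^2$ and using $\int_{\mathbb{T}^2}g_\eta(y)\,dy=1$, which leaves $f(\eta)=N(0,\Pi)$.

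The main obstacle is the bookkeeping in the identity $f^{-1}\mathcal{L}_\eta^\ast(g_\eta f)=\mathcal{L}_\eta g_\eta$: one must carefully track the first-order (drift) and second-order (diffusion) contributions when $\mathcal{L}_\eta^\ast$ differentiates the product $g_\eta f$, and check that the mixed terms involving $\nabla_\eta g_\eta\cdot\nabla_\eta f$ together with the sign change in the drift conspire to reproduce $\mathcal{L}_\eta g_\eta$ rather than some other operator. This is a standard "conjugation of the OU generator by its invariant density" computation (it reflects the fact that $\mathcal{L}_\eta$ is self-adjoint on $L^2(\rho_\eta)$), but it is the one place where the specific Gaussian form of $f$ and the commutativity of $\Gamma$ and $\Pi$ genuinely enter. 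A secondary point to be careful about is the well-posedness of \eqref{eq:g_eta}: existence, uniqueness, nonnegativity and smoothness of $g_\eta$ for each fixed $\eta$, which I would take from the corresponding periodic elliptic theory (the operator $\mathcal{L}_0^\ast(\eta)+\mathcal{L}_\eta$ acting in $y$ on $\mathbb{T}^2$ is, for fixed $\eta$, the Fokker--Planck operator of the non-degenerate diffusion with generator $\mathcal{L}_0(\eta)$ on the compact torus), referring to \cite{duncan} where needed.
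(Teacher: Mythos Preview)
Your core computation—the conjugation identity $f^{-1}\mathcal{L}_\eta^\ast(g_\eta f)=\mathcal{L}_\eta g_\eta$ for the Gaussian $f=\rho_\eta$—is exactly the same as the paper's, and your overall logic (verify that $g_\eta f$ solves $\mathcal{G}^\ast\rho=0$, then invoke uniqueness of the invariant density) is sound. The paper runs the argument in the opposite direction: it starts from the known $\rho$, applies the disintegration theorem to write $\rho(y,\eta)=g_\eta(y)f(\eta)$ with $f$ the $\eta$-marginal, identifies $f$ as $N(0,\Pi)$ via invariance of $\eta_t$, and then \emph{derives} equation \eqref{eq:g_eta} for the conditional density $g_\eta$ from $\mathcal{G}^\ast\rho=0$ using the same conjugation identity. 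Uniqueness of $g_\eta$ is then deduced from uniqueness of $\rho$, not from any direct analysis of \eqref{eq:g_eta}.

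This difference in direction matters for one point you flagged: your treatment of the well-posedness of \eqref{eq:g_eta} contains an error. You describe $\mathcal{L}_0^\ast(\eta)+\mathcal{L}_\eta$ as, ``for fixed $\eta$, the Fokker--Planck operator of the non-degenerate diffusion with generator $\mathcal{L}_0(\eta)$ on the compact torus.'' But $\mathcal{L}_\eta$ differentiates in $\eta$, so \eqref{eq:g_eta} is a PDE on $\mathbb{T}^2\times\R^{2K}$, not a family of elliptic problems on $\mathbb{T}^2$ parametrized by $\eta$; you cannot solve it one $\eta$ at a time. The paper sidesteps this entirely: existence of $g_\eta$ comes for free from disintegrating the already-known $\rho$, and uniqueness is reduced to uniqueness of $\rho$. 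If you keep your synthetic direction, you need an independent existence argument for \eqref{eq:g_eta} on the product space (or simply note that the candidate $g_\eta:=\rho/f$ does the job), and your uniqueness argument should be the one you already implicitly use—two solutions $g^1,g^2$ give two invariant densities $g^i f$, hence coincide.
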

\begin{proof}
First we show that the form of the density $\rho$ follows from the disintegration theorem from measure theory
(see for example \cite[chapter 3, 70 and 71]{Dellacherie-Meyer}) and the invariance of $\rho$. Let $\pi:\R^{2K}\times\mathbb{T}^{2}\to\R^{2K}, (\eta,y) \mapsto \eta$ be the projection and $\nu:=\rho\circ\pi^{-1}$ the push-forward under $\rho$. Then the disintegration theorem implies that there exists a family of measures $(\mu_\eta)_{\eta\in\R^{2K}}$  on  $\mathbb{T}^{2}$, such that:

$\bullet$   $\eta\mapsto\mu_{\eta}(A)$ is Borel measurable for each Borel measurable set $A\in \mathcal{B}(\mathbb{T}^{2})$

$\bullet$ for every Borel measurable function $h:\mathbb{T}^2 \times \R^{2K} \to \R$, 
\begin{equation}
\int_{\mathbb{T}^2 \times \R^{2K}} h(y,\eta)\rho(d(y,\eta))=\int_{\R^{2K}}\int_{\mathbb{T}^2} h(y,\eta)\mu_{\eta}(dy)\nu(d\eta).
\end{equation}
Since by assumption $\rho$ has a density, that we also denote by $\rho$, it follows that $\nu$ has a density given by 
\begin{equation*}
\eta\mapsto\int\rho(y,\eta)dy=:f(\eta).
\end{equation*}
Consequently, also $\mu_{\eta}$ has a density, namely the conditional density 
\begin{equation*}
y\mapsto \mathbf{1}_{\{f>0\}}\rho(y,\eta)/f(\eta)=:g_{\eta}(y).
\end{equation*}
In order to prove that the marginal distribution $\nu$ under $\rho$ is the normal distribution $N(0,\Pi)$, consider $h\in C_{b}(\mathbb{T}^{2}\times\R^{K},\R)$ with $h(y,\eta)=h(\eta)$ not depending on $y$. Then if $(Y_{0},\eta_{0})\sim\rho$, we have for any $t\geqslant 0$:
\begin{equation*}
\E[h(\eta_{t})]=\E[h(Y_{t},\eta_{t})]=\int h(y,\eta)d\rho=\int h(\eta)f(\eta)d\eta.
\end{equation*}
Hence $f$ is given by the density of the $N(0,\Pi)$ distribution, as this is the unique invariant distribution for $(\eta_{t})_t$. 

It is left to derive the equation (\ref{eq:g_eta}) for the density $g_{\eta}(y)$. For that we use the invariance of $\rho$ and write
\begin{align*}
0=\mathcal{G}^{\ast}\rho=(\mathcal{L}_{0}^{\ast}+\mathcal{L}_{\eta}^{\ast})(g_{\eta}(y)f(\eta))=f(\eta)\mathcal{L}_{0}^{\ast}g_{\eta}(y)+\mathcal{L}_{\eta}^{\ast}(g_{\eta}(y)f(\eta))=f(\eta)\paren[\big]{\mathcal{L}_{0}^{\ast}g_{\eta}(y)+\mathcal{L}_{\eta}g_{\eta}(y)},
\end{align*} 
where we used that for any $h\in C^{2}(\R^{2K},\R)$, 
\begin{align*}
\mathcal{L}_{\eta}^{\ast}(h(\eta)f(\eta))&=  \nabla_{\eta}\cdot (h(\eta)\Gamma\eta f(\eta))+\Gamma\Pi :\nabla_{\eta}\nabla_{\eta}(h(\eta)f(\eta))\\&=h(\eta)[ \nabla_{\eta}\cdot (\Gamma\eta f(\eta))+\Gamma\Pi :\nabla_{\eta}\nabla_{\eta} f(\eta)]\\&\quad +f(\eta)[\Gamma\eta\cdot\nabla_{\eta}h(\eta)+\Gamma\Pi : \nabla_{\eta}\nabla_{\eta} h(\eta)]\\&\quad + 2\nabla_{\eta}h(\eta)\cdot\Gamma\Pi\nabla_{\eta}f(\eta)
\\&=h(\eta)\mathcal{L}_{\eta}^{\ast} f(\eta)+f(\eta)\mathcal{L}_{\eta}h(\eta)\\&\quad +2(\nabla_{\eta}h(\eta)\cdot\Gamma\Pi\nabla_{\eta}f(\eta)+f(\eta)\Gamma \eta\cdot\nabla_{\eta}h(\eta))\\&=h(\eta)\mathcal{L}_{\eta}^{\ast}f(\eta)+f(\eta)\mathcal{L}_{\eta}h(\eta)\\&=f(\eta)\mathcal{L}_{\eta}h(\eta),
\end{align*} where we added and subtracted the term $f(\eta)\Gamma \eta\cdot\nabla_{\eta}h(\eta)$ and then used that $\nabla_{\eta}f(\eta)=-f(\eta)\Pi^{-1}\eta $ and $\mathcal{L}_{\eta}^{\ast}f=0$. As $f>0$, the previous implies the equation (\ref{eq:g_eta}) for $g_{\eta}(y)$.

The uniqueness of the solution $g_{\eta}(y)$ in the class of probability densities in the $y$-variable  follows from the uniqueness of the density $\rho$ solving $\mathcal{G}^{\ast}\rho=0$. Indeed, let $g^{1},g^{2}\in C^{2}(\R^{2K}\times\mathbb{T}^{2},\R)$ be positive such that $\int_{\mathbb{T}^{2}}g^{i}(\eta,y)dy=1$ and they solve 
\begin{equation*}
(\mathcal{L}_{0}^{\ast}+\mathcal{L}_{\eta})g^{i}(\eta,y)=0 \quad  \text{for } i=1,2.
\end{equation*}
Then setting $\rho^{i}(\eta,y):=g^{i}(\eta,y)f(\eta)$ for $i=1,2$ we obtain  probability densities of a probability measure on $\R^{2K}\times\mathbb{T}^{2}$ solving $\mathcal{G}^{\ast}\rho^{i}=0$ for $i=1,2$.  As a consequence $\rho^{1}=\rho^{2}=\rho$, which implies  $g^{1}=g^{2}$.
\end{proof}

\begin{subsection}{Determining the limit rough path}\label{4.1}
In this subsection, we prove the convergence of the Itô integrals 
\begin{align*}
\int_{s}^{t}(X^{\epsilon}_r-X^{\epsilon}_s)\otimes dX^{\epsilon}_r=\paren[\bigg]{\int_{s}^{t}(X^{\epsilon,i}_r-X^{\epsilon,i}_s)dX^{\epsilon,j}_r}_{i,j=1,2}
\end{align*} and determine the limit. In order to obtain the limit,  we will use a decomposition of $X^\epsilon(t)$ via the solution $\chi$ of the Poisson equation $\mathcal{G}\chi=-F$, which exists by \cref{prop:w1p}. 
Rewriting the drift term with Itô-formula for $\chi(Y^{\epsilon}_{t},\eta^{\epsilon}_{t})$,  we obtain:
\begin{align*}
\frac{1}{\epsilon}\int_{0}^{t}F(Y^{\epsilon}_s,\eta^{\epsilon}_s)ds=-(\epsilon(\chi(Y^{\epsilon}_{t},\eta^{\epsilon}_{t})-\chi(Y^\epsilon_0,\eta^\epsilon_0))- \tilde{M}^{\epsilon}_{t})
\end{align*}
where 
\begin{equation*} 
\tilde{M}^{\epsilon,i}_{t}:=\tilde{M}^{\epsilon,i}_{1}(t)+\tilde{M}^{\epsilon,i}_{2}(t):=\int_{0}^{t}\nabla_{y}\chi^{i} (Y^{\epsilon}_{s},\eta^{\epsilon}_{s})\cdot\sqrt{2\Sigma}(Y^{\epsilon}_{s},\eta^{\epsilon}_{s})dB_{s}+\int_{0}^{t}\nabla_{\eta}\chi^{i}(Y^{\epsilon}_{s},\eta^{\epsilon}_{s})\cdot\sqrt{2\Gamma\Pi} dW_{s}
\end{equation*} for $i=1,2$.
As a consequence, we have
\begin{equation}\label{XIto}
X^\epsilon_t = X^\epsilon_0 + \epsilon\left(\chi \left(Y^\epsilon_0, \eta^\epsilon_0\right) - \chi(Y^\epsilon_t, \eta^\epsilon_t) \right) + M_1^\epsilon (t) + M_2^\epsilon(t)
\end{equation}
where martingale terms $M^{\epsilon}:=M_1^{\epsilon}+M_{2}^{\epsilon}$ are given by
\begin{align}
M_1^{\epsilon,i}(t) &:= \int_0^t (\nabla_y \chi^{i}(Y^{\epsilon}_s,\eta^{\epsilon}_s) + e_{i})\cdot\sqrt{2 \Sigma} (Y^{\epsilon}_s,\eta^{\epsilon}_s) dB_s  \label{M1}\\
M_2^{\epsilon,i}(t) &:= \int_0^t \nabla_\eta \chi^{i}(Y^{\epsilon}_s,\eta^{\epsilon}_s)\cdot\sqrt{2 \Gamma \Pi}  dW_{s} \label{M2}
\end{align} for $i=1,2$.
Using the dynamics of $X^{\epsilon}$, we decompose the iterated integrals for $i,j\in\{1,2\}$,
\begin{align}
\int_0^t (X_s^{\epsilon,i} - X_0^{\epsilon,i}) dX_s^{\epsilon,j} 
&= \int_0^t(X_s^{\epsilon,i} - X_0^{\epsilon,i})  \sum_{l=1}^{2}\sqrt{2 \Sigma}(j,l) (Y^\epsilon_s, \eta^\epsilon_s ) dB^{l}_s \label{eq:mart}\\&\quad + \int_0^t  (X_s^{\epsilon,i} - X_0^{\epsilon,i}) \frac{1}{\epsilon}F^{j}(Y^\epsilon_s, \eta^\epsilon_s ) ds. \label{eq:drift}
\end{align}
The next step is to rewrite the terms \eqref{eq:mart} and \eqref{eq:drift} collecting the vanishing and non-vanishing terms.

\noindent First we consider the term \eqref{eq:mart} and plug the decomposition (\ref{XIto}) of $X^{\epsilon}$. We obtain
\begin{align}\label{eq:sum1}
\MoveEqLeft
\int_0^t(X_s^{\epsilon,i} - X_0^{\epsilon,i})  \sum_{l=1}^{2}\sqrt{2 \Sigma}(j,l) (Y^\epsilon_s, \eta^\epsilon_s ) dB^{l}_s\nonumber
\\&=\epsilon\int_{0}^{t}(\chi^{i}(Y^\epsilon_0, \eta^\epsilon_0)-\chi^{i}(Y^{\epsilon}_s,\eta^{\epsilon}_s))\sum_{l=1}^{2}\sqrt{2 \Sigma}(j,l) (Y^\epsilon_s, \eta^\epsilon_s ) dB^{l}_s\nonumber
\\&\qquad+\int_{0}^{t}M^{\epsilon,i}_{s}\sum_{l=1}^{2}\sqrt{2 \Sigma_{j,l} (Y^\epsilon_s, \eta^\epsilon_s )}dB^{l}_{s}\nonumber
\\&=\epsilon\int_{0}^{t}(\chi^{i}(Y^\epsilon_0, \eta^\epsilon_0)-\chi^{i}(Y^{\epsilon}_s,\eta^{\epsilon}_s))\sum_{l=1}^{2}\sqrt{2 \Sigma}(j,l) (Y^\epsilon_s, \eta^\epsilon_s ) dB^{l}_s\nonumber
\\&\qquad+ M_{t}^{\epsilon,i}\paren[\bigg]{\int_{0}^{t}\sum_{l=1}^{2}\sqrt{2 \Sigma}(j,l) (Y^\epsilon_r, \eta^\epsilon_r )dB^{l}_{r}}\nonumber
\\&\quad-\int_{0}^{t}\paren[\bigg]{\int_{0}^{s}\sum_{l=1}^{2}\sqrt{2 \Sigma}(j,l) (Y^\epsilon_r, \eta^\epsilon_r )dB^{l}_{r}}dM^{\epsilon,i}_{s}-\angles[\bigg]{ \int_{0}^{\cdot}\sum_{l=1}^{2}\sqrt{2 \Sigma}(j,l) (Y^\epsilon_r, \eta^\epsilon_r )dB^{l}_{r},M^{\epsilon,i}}_{t}.
\end{align} 
By stationarity of $(Y^{\epsilon},\eta^{\epsilon})$ and the boundedness (\ref{eq:Sigma}) of $\Sigma$, 
we notice that the first summand in the decomposition \eqref{eq:sum1} will converge in $L^{2}(\p)$ to zero.  Moreover, for the quadratic variation term, we can argue with the ergodic theorem for $(Y,\eta)$, \cite[Theorem 3.3.1]{DaPrato1996}, obtaining the convergence in probability
\begin{align}\label{eq:qv-conv}
\MoveEqLeft
\p\bigg(\abs[\bigg]{\angles[\bigg]{ \int_{0}^{\cdot}\sum_{l=1}^{2}\sqrt{2 \Sigma}(j,l) (Y^\epsilon_r, \eta^\epsilon_r )dB^{l}_{r},M^{\epsilon,i}}_{t}-t\int e_{j}\cdot 2\Sigma (e_{i}+\nabla_{y}\chi^{i})d\rho}>\delta\bigg)\nonumber\\
&=\p\bigg(\abs[\bigg]{\int_{0}^{t}e_{j}\cdot 2\Sigma (e_{i}+\nabla_{y}\chi^{i})(Y^{\epsilon}_{s},\eta^{\epsilon}_{s})ds-t\int e_{j}\cdot 2\Sigma (e_{i}+\nabla_{y}\chi^{i})d\rho}>\delta\bigg)\nonumber\\
&=\tilde{\p}\bigg(\abs[\bigg]{\epsilon^{2}\int_{0}^{\epsilon^{-2}t}e_{j}\cdot 2\Sigma (e_{i}+\nabla_{y}\chi^{i})(Y_{s},\eta_{s})ds- t\int e_{j}\cdot 2\Sigma (e_{i}+\nabla_{y}\chi^{i})d\rho}>\delta\bigg)\to 0,
\end{align} 
when $\epsilon\to 0$, for any $\delta>0$, using that $(Y^{\epsilon}_t,\eta^{\epsilon}_t)_{t\geqslant 0}\stackrel{d}{=}(Y_{\epsilon^{-2}t},\eta_{\epsilon^{-2}t})_{t\geqslant 0}$, where $(Y_t,\eta_t)_{t\geqslant 0}$ is the Markov process with generator $\mathcal{G}$ (with respect to some base probability measure $\tilde{\p}$). 
To deduce the convergence of the remaining two martingale terms in \eqref{eq:sum1}, we will add them up with the decomposition of the term \eqref{eq:drift} below.

\noindent 
We decompose the term \eqref{eq:drift} in the following way
\begin{align}
\int_0^t  (X_s^{\epsilon,i} - X_0^{\epsilon,i}) \frac{1}{\epsilon}F^{j}(Y^\epsilon_s, \eta^\epsilon_s ) ds&=\int_0^t  (\chi^{i}(Y^\epsilon_0, \eta^\epsilon_0)-\chi^{i}(Y^{\epsilon}_s,\eta^{\epsilon}_s)) F^{j}(Y^\epsilon_s, \eta^\epsilon_s ) ds\label{driftTerm1}\\
&\qquad+\int_0^t  M_{s}^{\epsilon,i} \frac{1}{\epsilon}F^{j}(Y^\epsilon_s, \eta^\epsilon_s ) dt\label{driftTerm2}.
\end{align}
For the first term in (\ref{driftTerm1}) we again apply the ergodic theorem for $(Y,\eta)$ yielding the convergence in probability, analogously as above,
\begin{align*}
\MoveEqLeft
\int_{0}^{t}(\chi^{i}(Y^\epsilon_0, \eta^\epsilon_0)-\chi^{i}(Y^{\epsilon}_s,\eta^{\epsilon}_s)) F^{j}(Y^{\epsilon}_s, \eta^{\epsilon}_s ) ds\\
&\stackrel{d}{=}\chi^{i}(Y^\epsilon_0, \eta^\epsilon_0)\epsilon^{2}\int_{0}^{\epsilon^{-2}t}F^{j}(Y_{r},\eta_{r})dr-\epsilon^{2}\int_{0}^{\epsilon^{-2}t}\chi^{i}F^{j}(Y_{r},\eta_{r})dr\\
&\rightarrow t(\chi^{i}(Y^\epsilon_0,\eta^\epsilon_0)E_{\rho}[F^{j}]-E_{\rho}[\chi^{i} F^{j}])=tE_{\rho}[\chi^{i}(-F)^{j}]=:t a_{F}(i,j),
\end{align*} 
where we used that $F$ has mean zero under $\rho$ by \cref{cor:meanzero} and we introduced the notation $E_{\rho}[f]:=\int f(y,\eta)\rho(y,\eta)d(y,\eta)$.

\noindent For the second term \eqref{driftTerm2}, we apply the integration by parts formula to further rewrite
\begin{align}\label{eq:sum2}
\int_{0}^{t}M^{\epsilon, i}_{s}\epsilon^{-1}F^{j}\left(Y_{s}^{\epsilon},\eta_{s}^{\epsilon}\right)ds=M^{\epsilon,i}_{t}\int_{0}^{t}\epsilon^{-1}F^{j}(Y_{s}^{\epsilon},\eta_{s}^{\epsilon})ds-\int_{0}^{t}\left(\int_{0}^{s}\epsilon^{-1}F^{j}(Y_{r}^{\epsilon},\eta_{r}^{\epsilon})dr \right)dM^{\epsilon,i}_{s}.
\end{align}

\noindent Let  $a_F^{\epsilon}$  be defined as
\begin{align*}
a_{F}^{\epsilon}(i,j)&:=\epsilon\int_{0}^{t}(\chi^{i}(Y^\epsilon_0, \eta^\epsilon_0)-\chi^{i}(Y^{\epsilon}_s,\eta^{\epsilon}_s))d\paren[\bigg]{\int_{0}^{\cdot}\sqrt{2 \Sigma} (Y^\epsilon, \eta^\epsilon  ) dB}^{j}_s \\&\quad+  \int_0^t  (\chi^{i}(Y^\epsilon_0, \eta^\epsilon_0)-\chi^{i}(Y^{\epsilon}_s,\eta^{\epsilon}_s) )F^{j}(Y^\epsilon_s, \eta^\epsilon_s ) ds.
\end{align*}

\noindent Then, using the definition of $a_{F}^\epsilon$ and summing up the two remaining terms in \eqref{eq:sum1} and the terms in \eqref{eq:sum2}, we get
\begin{align}\label{eq:itint}
\int_{0}^{t}(X^{\epsilon,i}_{s}-X^{\epsilon,i}_{0})dX_{s}^{\epsilon,j}
&{=a_{F}^{\epsilon}(i,j)+M_{t}^{\epsilon,i}(X_{t}^{\epsilon,j}-X_{0}^{\epsilon,j})-\int_{0}^{t}(X_{s}^{\epsilon,j}-X_{0}^{\epsilon,j})dM_{s}^{\epsilon,i}}\nonumber\\
&\quad {-\angles[\bigg]{ M^{\epsilon,i},\paren[\bigg]{\int_{0}^{\cdot}\sqrt{2\Sigma }(Y^{\epsilon}_{r},\eta^{\epsilon}_{r})dB_{r}}^{j}}_{t}}.
\end{align}
To obtain the limit in distribution, we utilize the convergence of $a_{F}^{\epsilon}$ in probability proven above (using the convergence of the term \eqref{driftTerm1} and the term vanishing in $L^{2}(\p)$), the convergence of the quadratic variation term in \eqref{eq:qv-conv} and \cref{prop:UCV} for the remaining terms. Here Slutzky's lemma ensures that the sum of a random variable converging in distribution and a random variable converging in probability, converges in distribution to the sum of the limits. To apply \cref{prop:UCV}, we check that the UCV condition for $(M^{\epsilon,i})$ is satisfied and that $(M^{\epsilon,i},X^{\epsilon,j})\Rightarrow (X^{i}-X^{i}_{0},X^{j})$ jointly in distribution. Here, the  joint convergence is due the decomposition \eqref{XIto} and the convergence for the process by \cite[Theorem 7]{dep}. To show the UCV condition, we utilize the stationarity of $(Y,\eta)$ and $(Y^{\epsilon},\eta^{\epsilon})\stackrel{d}{=}(Y_{\epsilon^{-2}\cdot},\eta_{\epsilon^{-2}\cdot})$, such that
\begin{align*}
\E[\langle \tilde{M}^{\epsilon,i}\rangle_{t}]
=t\int [(\nabla_{y}\chi^{i})^{T}2\Sigma\nabla_{y}\chi^{i}+(\nabla_{\eta}\chi^{i})^{T}2\Gamma\Pi\nabla_{\eta}\chi^{i}]d\rho<\infty.
\end{align*} 
Here the right-hand-side is finite due to \cref{prop:w1p} and boundedness of $\Sigma$, \eqref{eq:Sigma}, and it does not depend on $\epsilon$, such that, utilizing again boundedness of $\Sigma$, the UCV condition for $(M^{\epsilon,i})_{\epsilon}$ is satisfied.\\
All together, we thus obtain the distributional convergence,
\begin{align*}
\int_{0}^{t}(X^{\epsilon,i}_{s}-X^{\epsilon,i}_{0})dX_{s}^{\epsilon,j}
&{=a_{F}^{\epsilon}(i,j)+M_{t}^{\epsilon,i}(X_{t}^{\epsilon,j}-X_{0}^{\epsilon,j})-\int_{0}^{t}(X_{s}^{\epsilon,j}-X_{0}^{\epsilon,j})dM_{s}^{\epsilon,i}}\nonumber\\
&\quad {-\angles[\bigg]{ M^{\epsilon,i},\paren[\bigg]{\int_{0}^{\cdot}\sqrt{2\Sigma }(Y^{\epsilon}_{r},\eta^{\epsilon}_{r})dB_{r}}^{j}}_{t}}\\&\Rightarrow ta_{F}(i,j)+(X^{i}_{t}-X^{i}_{0})(X^{j}_{t}-X_{0}^{j})-\int_{0}^{t}(X^{j}_{s}-X_{0}^{j})dX_{s}^{i}\nonumber\\&\quad-t\int e_{j}\cdot 2\Sigma (e_{i}+\nabla_{y}\chi^{i})d\rho\nonumber\\
&=t a_{F}(i,j)+\langle X^{i},X^{j}\rangle_{t}-t\int (e_{i}+\nabla_{y}\chi^{i})\cdot 2\Sigma e_{j}d\rho+\int_{0}^{t}(X^{i}_{s}-X_{0}^{i})dX^{j}_{s}.\nonumber
\end{align*} 
The arguments can also be generalized to a base-point $s>0$ in the same manner as for $s=0$ above, such that from above we obtain the weak limit of the iterated integrals $\mathbb{X}_{s,t}^{\epsilon}(i,j)$, which decomposes in the iterated integrals $\mathbb{X}_{s,t}(i,j)$ of the Brownian motion $X=\sqrt{2D}Z$ plus an area correction term. Furthermore, the joint distributional convergence of $((\mathbb{X}^{\epsilon}_{s,t}(i,j))_{i,j=1,2})_{\epsilon}$ follows from the decomposition \eqref{eq:itint} and joint distributional convergence of $((M^{\epsilon,i},X^{\epsilon,j})_{i,j=1,2})_{\epsilon}$, which relies on joint convergence of $((X^{\epsilon,i})_{i=1,2})_{\epsilon}$ by \cite[Theorem 7]{dep}.\\
We summarize our findings in the following proposition.
\begin{proposition}\label{prop:limit1}
Let $(X^{\epsilon},Y^{\epsilon},\eta^{\epsilon})$ be the solution of the system \eqref{eq:xye} for $(Y^{\epsilon}_{0},\eta^{\epsilon}_{0})\sim\rho$. Then for all $s,t\in\Delta_{T}$,  the iterated Itô-integrals $(\mathbb{X}^{\epsilon}_{s,t})$   convergence weakly in $\R^{2\times 2}$, as $\epsilon\to 0$ 
\begin{align}
\mathbb{X}^{\epsilon}_{s,t}(i,j):=\int_{s}^{t}X^{\epsilon,i}_{s,r}dX^{\epsilon,j}_{r}&\Rightarrow \mathbb{X}_{s,t}(i,j)+(t-s)\paren[\bigg]{\langle X^{i},X^{j}\rangle_{1}+\langle \chi^{i},(\mathcal{G}\chi)^{j}\rangle_{\rho}\\&\quad-\int (e_{i}+\nabla_{y}\chi^{i})\cdot 2\Sigma e_{j}d\rho}, \quad  i,j\in\{1,2\}
\end{align} 
where
\begin{align*}
X=\sqrt{2D}Z,\quad\mathcal{G}\chi=-F,\quad e_{1}=(1,0),\quad e_{2}=(0,1)
\end{align*} 
for a standard two-dimensional Brownian motion $Z$, $\mathbb{X}_{s,t}(i,j):=\int_{s}^{t}X_{s,r}^{i}dX^{j}_{r}$ and
\begin{align}\label{effectiveDiff}
D=\int (I + \nabla_y \chi)^{T}\Sigma(I + \nabla_y \chi) \rho(dy, d\eta) + \int (\nabla_\eta \chi)^{T} \Gamma \Pi \nabla_\eta \chi \rho(dy, d\eta).
\end{align}
\end{proposition}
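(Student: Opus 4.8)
The plan is to read the statement off the decomposition developed in \cref{4.1}. The starting point is the Itô representation \eqref{XIto} of $X^\epsilon$ via the corrector $\chi$ solving $\mathcal{G}\chi=-F$ (which exists with the pointwise and gradient bounds of \cref{prop:w1p}), isolating a boundary term of size $\epsilon$ and the martingale $M^\epsilon=M_1^\epsilon+M_2^\epsilon$ from \eqref{M1}--\eqref{M2}. Substituting \eqref{XIto} into $\mathbb{X}^\epsilon_{s,t}(i,j)=\int_s^t X^{\epsilon,i}_{s,r}\,dX^{\epsilon,j}_r$ and splitting $dX^{\epsilon,j}$ into its $dB$ and $\epsilon^{-1}F$ pieces produces the expansions \eqref{eq:sum1}, \eqref{driftTerm1}, \eqref{driftTerm2}, \eqref{eq:sum2}, and I would then sort the resulting terms into three classes, each handled by a different mechanism.

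First, the terms carrying an explicit factor $\epsilon$ together with bounded integrands vanish in $L^2(\p)$ by boundedness of $\Sigma$ (\eqref{eq:Sigma}), the bound $\abs{\chi}\lesssim V$ of \cref{prop:w1p}, and stationarity of $(Y^\epsilon,\eta^\epsilon)$. Second, the additive-functional term in \eqref{driftTerm1} and the covariation term $\langle\int_s^\cdot\sqrt{2\Sigma}(Y^\epsilon,\eta^\epsilon)\,dB^j,M^{\epsilon,i}\rangle$ converge in probability: using the equality in law $(Y^\epsilon_t,\eta^\epsilon_t)_t\stackrel{d}{=}(Y_{\epsilon^{-2}t},\eta_{\epsilon^{-2}t})_t$ and the ergodic theorem for $(Y,\eta)$ under $\rho$, together with $E_\rho[F]=0$ from \cref{cor:meanzero}, they converge as in \eqref{eq:qv-conv} to $(t-s)E_\rho[\chi^i(-F)^j]=(t-s)\langle\chi^i,(\mathcal{G}\chi)^j\rangle_\rho$ and $(t-s)\int e_j\cdot 2\Sigma(e_i+\nabla_y\chi^i)\,d\rho$, respectively. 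Third, the genuinely martingale-driven terms $M^{\epsilon,i}_t X^{\epsilon,j}_{s,t}-\int_s^t X^{\epsilon,j}_{s,r}\,dM^{\epsilon,i}_r$ are handled via \cref{prop:UCV}.

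To apply \cref{prop:UCV} I would verify the UCV condition \eqref{UCVcond} for $(M^{\epsilon,i})_\epsilon$: since $M^{\epsilon,i}$ is a pure martingale it suffices to bound $\sup_\epsilon\E[\langle M^{\epsilon,i}\rangle_T]$, and up to the bounded contribution of the $e_i$-shift this bracket is that of $\tilde M^{\epsilon,i}$, which by stationarity and $(Y^\epsilon,\eta^\epsilon)\stackrel{d}{=}(Y_{\epsilon^{-2}\cdot},\eta_{\epsilon^{-2}\cdot})$ equals $t\int[(\nabla_y\chi^i)^T 2\Sigma\nabla_y\chi^i+(\nabla_\eta\chi^i)^T 2\Gamma\Pi\nabla_\eta\chi^i]\,d\rho$, finite and independent of $\epsilon$ thanks to the gradient bounds of \cref{prop:w1p} and \eqref{eq:Sigma}. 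The joint convergence $(M^{\epsilon,i},X^{\epsilon,j})\Rightarrow(X^i-X^i_0,X^j)$ needed in \cref{prop:UCV} is immediate from \eqref{XIto}, since $M^{\epsilon,i}=X^{\epsilon,i}-X^{\epsilon,i}_0+O(\epsilon)$ and $X^\epsilon\Rightarrow\sqrt{2D}Z$ by \cite[Theorem 7]{dep}; hence $\int_s^\cdot X^{\epsilon,j}_{s,r}\,dM^{\epsilon,i}_r\Rightarrow\int_s^\cdot X^j_{s,r}\,dX^i_r$ jointly with everything else. Combining the in-probability and in-distribution pieces by Slutsky's lemma and rearranging with integration by parts, $X^i_tX^j_t-X^i_sX^j_s=\int_s^t X^i_r\,dX^j_r+\int_s^t X^j_r\,dX^i_r+(\langle X^i,X^j\rangle_t-\langle X^i,X^j\rangle_s)$, the limit collapses to $\mathbb{X}_{s,t}(i,j)+(t-s)(\langle X^i,X^j\rangle_1+\langle\chi^i,(\mathcal{G}\chi)^j\rangle_\rho-\int(e_i+\nabla_y\chi^i)\cdot 2\Sigma e_j\,d\rho)$; the general base point $s>0$ is treated identically by stationarity, and joint convergence over $i,j\in\{1,2\}$ follows from the joint convergence of $(X^{\epsilon,i})_{i=1,2}$ in \cite[Theorem 7]{dep}. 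The main obstacle is essentially organizational: grouping the numerous terms of \eqref{eq:sum1}--\eqref{eq:sum2} so that the martingale brackets telescope into $\langle X^i,X^j\rangle_t$ plus the explicit correction, and ensuring the covariation term and the term $M^{\epsilon,i}_tX^{\epsilon,j}_{s,t}$ are both processed within a single consistent application of \cref{prop:UCV}.
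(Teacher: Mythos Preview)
Your proposal is correct and follows essentially the same approach as the paper: the same corrector decomposition \eqref{XIto}, the same split of $\mathbb{X}^\epsilon$ into the pieces \eqref{eq:sum1}--\eqref{eq:sum2}, the same three mechanisms (vanishing $\epsilon$-boundary terms, ergodic-theorem limits for the additive functional and covariation, UCV for the martingale integrals via \cref{prop:UCV}), and the same Slutsky-plus-integration-by-parts recombination into \eqref{eq:itint}. The only difference is presentational---you organize the terms into three explicit classes, whereas the paper treats them sequentially---but the argument is identical.
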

\begin{corollary}\label{cor:limit1}
Let $(X^{\epsilon},Y^{\epsilon},\eta^{\epsilon})$ be as in \cref{prop:limit1}. Then for all $s,t\in\Delta_{T}$ also the iterated Stratonovich integrals $(\tilde{\mathbb{X}}_{s,t}^{\epsilon})$ converge weakly in $\R^{2\times 2}$ as $\epsilon\to 0$:
\begin{align}
\tilde{\mathbb{X}}^{\epsilon}_{s,t}(i,j):=\int_{s}^{t}X^{\epsilon,i}_{s,r}\circ dX^{\epsilon,j}_{r}\Rightarrow\tilde{\mathbb{X}}_{s,t}(i,j)+(t-s)\tilde{A}(i,j), \quad i,j\in\{1,2\}
\end{align} 
 where 
\begin{align*}
X=\sqrt{2D}Z,\quad \tilde{\mathbb{X}}_{s,t}(i,j):=\int_{s}^{t}X_{s,r}^{i}\circ dX^{j}_{r}
\end{align*} 
for a standard two-dimensional Brownian motion $Z$ and  $D$ is given by \eqref{effectiveDiff}.\\
Furthermore, the area correction is given by
\begin{align*}
\tilde{A}(i,j) = \langle\chi^{i},\mathcal{G}^{A}\chi^{j}\rangle_{\rho}+\int e_{i}\cdot\Sigma\nabla_{y}\chi^{j}d\rho-\int \nabla_{y}\chi^{i}\cdot \Sigma e_{j} d\rho.
\end{align*}
for $\mathcal{G}^{A}:=\frac{1}{2}(\mathcal{G}-\mathcal{G}^{\star})$, where $\mathcal{G}^{\star}$ denotes the $L^{2}(\rho)$-adjoint.
\end{corollary}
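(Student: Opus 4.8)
The plan is to reduce the statement to \cref{prop:limit1} via the It\^o--Stratonovich correction formula and then simplify the resulting correction term algebraically. Recall that for continuous semimartingales
\[
\int_{s}^{t}X^{\epsilon,i}_{s,r}\circ dX^{\epsilon,j}_{r}=\int_{s}^{t}X^{\epsilon,i}_{s,r}\,dX^{\epsilon,j}_{r}+\tfrac{1}{2}\angles{X^{\epsilon,i},X^{\epsilon,j}}_{s,t},
\]
so the first step is to identify the quadratic covariation. From the dynamics \eqref{eq:xye} the martingale part of $X^{\epsilon,i}$ is $\int_{0}^{\cdot}\sum_{l}\sqrt{2\Sigma}(i,l)(Y^{\epsilon}_{r},\eta^{\epsilon}_{r})\,dB^{l}_{r}$, and since $\sqrt{2\Sigma}$ is symmetric this gives $\angles{X^{\epsilon,i},X^{\epsilon,j}}_{s,t}=\int_{s}^{t}2\Sigma_{i,j}(Y^{\epsilon}_{r},\eta^{\epsilon}_{r})\,dr$. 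Exactly as in the ergodic-theorem argument behind \eqref{eq:qv-conv} (using $(Y^{\epsilon},\eta^{\epsilon})\stackrel{d}{=}(Y_{\epsilon^{-2}\cdot},\eta_{\epsilon^{-2}\cdot})$, stationarity and boundedness of $\Sigma$, cf.\ \eqref{eq:Sigma}), this converges in probability to the deterministic limit $(t-s)\int 2\Sigma_{i,j}\,d\rho$.

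Since the bracket term has a deterministic limit, Slutzky's lemma combined with the joint weak convergence of $(\mathbb{X}^{\epsilon}_{s,t}(i,j))_{i,j}$ from \cref{prop:limit1} gives
\[
\tilde{\mathbb{X}}^{\epsilon}_{s,t}(i,j)\Rightarrow \mathbb{X}_{s,t}(i,j)+(t-s)\Big(\angles{X^{i},X^{j}}_{1}+\angles{\chi^{i},(\mathcal{G}\chi)^{j}}_{\rho}-\int(e_{i}+\nabla_{y}\chi^{i})\cdot 2\Sigma e_{j}\,d\rho+\int\Sigma_{i,j}\,d\rho\Big),
\]
jointly in $(i,j)$ in $\R^{2\times 2}$. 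Writing the Stratonovich lift of the limit as $\tilde{\mathbb{X}}_{s,t}(i,j)=\mathbb{X}_{s,t}(i,j)+\tfrac{1}{2}(t-s)\angles{X^{i},X^{j}}_{1}$ then identifies the area correction as $\tilde{A}(i,j)=\tfrac{1}{2}\angles{X^{i},X^{j}}_{1}+\angles{\chi^{i},(\mathcal{G}\chi)^{j}}_{\rho}-\int(e_{i}+\nabla_{y}\chi^{i})\cdot 2\Sigma e_{j}\,d\rho+\int\Sigma_{i,j}\,d\rho$.

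It remains to check this agrees with the claimed expression, for which I would use two ingredients. First, from \eqref{effectiveDiff} and $X=\sqrt{2D}Z$, one has $\tfrac{1}{2}\angles{X^{i},X^{j}}_{1}=D_{i,j}=\int\big[(e_{i}+\nabla_{y}\chi^{i})\cdot\Sigma(e_{j}+\nabla_{y}\chi^{j})+\nabla_{\eta}\chi^{i}\cdot\Gamma\Pi\nabla_{\eta}\chi^{j}\big]\,d\rho$. Second, the carr\'e-du-champ identity for $\mathcal{G}=\mathcal{L}_{0}+\mathcal{L}_{\eta}$: since $\mathcal{L}_{0}$ has second-order part $\Sigma:\nabla_{y}\nabla_{y}$, $\mathcal{L}_{\eta}$ has second-order part $\Pi\Gamma:\nabla_{\eta}\nabla_{\eta}$ and there is no mixed $y$-$\eta$ second-order term, the invariance of $\rho$ yields (as in the calculation after \eqref{eq:sp-gap-eta}, polarized) $\angles{\chi^{i},\mathcal{G}^{S}\chi^{j}}_{\rho}=-\int\big(\nabla_{y}\chi^{i}\cdot\Sigma\nabla_{y}\chi^{j}+\nabla_{\eta}\chi^{i}\cdot\Gamma\Pi\nabla_{\eta}\chi^{j}\big)\,d\rho$, the integrations by parts being justified by the growth bounds $\abs{\chi^{i}},\abs{\nabla_{(y,\eta)}\chi^{i}}\lesssim V\in L^{2}(\rho)$ from \cref{prop:w1p}. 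Hence $\angles{\chi^{i},\mathcal{G}^{A}\chi^{j}}_{\rho}=\angles{\chi^{i},(\mathcal{G}\chi)^{j}}_{\rho}-\angles{\chi^{i},\mathcal{G}^{S}\chi^{j}}_{\rho}=\angles{\chi^{i},(\mathcal{G}\chi)^{j}}_{\rho}+\int\big(\nabla_{y}\chi^{i}\cdot\Sigma\nabla_{y}\chi^{j}+\nabla_{\eta}\chi^{i}\cdot\Gamma\Pi\nabla_{\eta}\chi^{j}\big)\,d\rho$. Substituting both ingredients into the formula for $\tilde{A}(i,j)$ and expanding $(e_{i}+\nabla_{y}\chi^{i})\cdot\Sigma(e_{j}+\nabla_{y}\chi^{j})$ with $e_{i}\cdot\Sigma e_{j}=\Sigma_{i,j}$, the three $\int\Sigma_{i,j}\,d\rho$-contributions cancel, the $\nabla_{y}\chi^{i}\cdot\Sigma\nabla_{y}\chi^{j}$ and $\nabla_{\eta}\chi^{i}\cdot\Gamma\Pi\nabla_{\eta}\chi^{j}$ terms combine with $\angles{\chi^{i},(\mathcal{G}\chi)^{j}}_{\rho}$ into $\angles{\chi^{i},\mathcal{G}^{A}\chi^{j}}_{\rho}$, and what survives is precisely $\angles{\chi^{i},\mathcal{G}^{A}\chi^{j}}_{\rho}+\int e_{i}\cdot\Sigma\nabla_{y}\chi^{j}\,d\rho-\int\nabla_{y}\chi^{i}\cdot\Sigma e_{j}\,d\rho$, using the symmetry of $\Sigma$ to match the cross-terms.

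The only genuinely non-routine point is this carr\'e-du-champ/integration-by-parts step: one must verify that the analogue of $\angles{(-\mathcal{L}_{\eta})f,f}_{\rho_{\eta}}=\langle\Pi\Gamma\nabla_{\eta}f,\nabla_{\eta}f\rangle_{\rho_{\eta}}$ holds for $\mathcal{G}$ on $L^{2}(\rho)$, namely $\angles{(-\mathcal{G})f,f}_{\rho}=\langle\Sigma\nabla_{y}f,\nabla_{y}f\rangle_{\rho}+\langle\Gamma\Pi\nabla_{\eta}f,\nabla_{\eta}f\rangle_{\rho}$, and apply its polarization to $f=\chi^{i}$; this is legitimate thanks to the pointwise bounds on $\chi$ and $\nabla\chi$ in \cref{prop:w1p}. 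Everything else --- Slutzky, the ergodic theorem, and the semimartingale bracket computation --- merely repeats arguments already carried out in the proof of \cref{prop:limit1}.
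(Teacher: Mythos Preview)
Your proof is correct and follows essentially the same route as the paper: apply the It\^o--Stratonovich correction, identify the bracket limit via the ergodic theorem, combine with \cref{prop:limit1} through Slutzky, and then simplify $\tilde{A}(i,j)$ using $D_{ij}=\tfrac{1}{2}\langle X^{i},X^{j}\rangle_{1}$ together with the carr\'e-du-champ identity $\langle\chi^{i},(-\mathcal{G}^{S})\chi^{j}\rangle_{\rho}=\int(\nabla_{y}\chi^{i}\cdot\Sigma\nabla_{y}\chi^{j}+\nabla_{\eta}\chi^{i}\cdot\Gamma\Pi\nabla_{\eta}\chi^{j})\,d\rho$. The paper phrases this last identity via the correspondence between the $H^{1}(\rho)$-norm and the quadratic variation of Dynkin's martingale (citing \cite[section 2.4]{klo}), which is exactly the polarized invariance computation you describe.
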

\begin{proof}
Recall the relation between the Itô and Stratonovich integral
\begin{align*}
\tilde{\mathbb{X}}^{\epsilon}_{0,t}(i,j)=\mathbb{X}^{\epsilon}_{0,t}(i,j)+\frac{1}{2}\langle X^{\epsilon,i},X^{\epsilon,j}\rangle_{t}.
\end{align*} 
The ergodic theorem for $(Y,\eta)$, \cite[Theorem 3.3.1]{DaPrato1996} together with $(Y^{\epsilon}_t,\eta^{\epsilon}_t)_{t\geqslant 0}\stackrel{d}{=}(Y_{\epsilon^{-2}t},\eta_{\epsilon^{-2}t})_{t\geqslant 0}$, implies the convergence in probability of the quadratic variation:
\begin{align*}
\frac{1}{2}\langle X^{\epsilon,i},X^{\epsilon,j}\rangle_{t}=\int_{0}^{t}e_{i}\cdot\Sigma e_{j} (Y^{\epsilon}_{s},\eta^{\epsilon}_{s})ds\to t\int e_{i}\cdot\Sigma e_{j} d\rho.
\end{align*} 
Thus we obtain, from \cref{prop:limit1} and \cref{lem:rhostr}, the following convergence in distribution
\begin{align*}
\tilde{\mathbb{X}}^{\epsilon}_{0,t}(i,j)=\mathbb{X}^{\epsilon}_{0,t}(i,j)+\frac{1}{2}\langle X^{\epsilon,i},X^{\epsilon,j}\rangle_{t}&\Rightarrow\mathbb{X}_{0,t}(i,j)+t\langle\chi^{i},(\mathcal{G}\chi)^{j}\rangle_{\rho}+\langle X^{i},X^{j}\rangle_{t}\\&\quad-t\int (\nabla_{y}\chi^{i}+e_{i})\cdot 2\Sigma e_{j}d\rho+t\int e_{i}\cdot \Sigma e_{j}d\rho\\& = \mathbb{X}_{0,t}(i,j)+ \frac{1}{2}\langle X^{i},X^{j}\rangle_{t} \\&\quad +t\paren[\bigg]{\langle\chi^{i},(\mathcal{G}\chi)^{j}\rangle_{\rho} + \frac{1}{2}\langle X^{i},X^{j}\rangle_{1} -\int \nabla_{y}\chi^{i}\cdot 2\Sigma e_{j}d\rho - \int e_{i}\cdot \Sigma e_{j}d\rho}\\&=\tilde{\mathbb{X}}_{0,t}(i,j)+t\tilde{A}(i,j).
\end{align*} 
The area correction can furthermore be written as
\begin{align*}
\tilde{A}(i,j) &= \langle\chi^{i},\mathcal{G}\chi^{j}\rangle_{\rho} + D(i,j) -\int \nabla_{y}\chi^{i}\cdot 2\Sigma e_{j}d\rho - \int e_{i}\cdot \Sigma e_{j}d\rho \\&=\langle\chi^{i},\mathcal{G}^{A}\chi^{j}\rangle_{\rho}+\int e_{i}\cdot \Sigma \nabla_{y}\chi^{j}d\rho-\int\nabla_{y}\chi^{i}\cdot\Sigma e_{j}d\rho,
\end{align*}
using that $\mathcal{G}=\mathcal{G}^{A}+\mathcal{G}^{S}$ for $\mathcal{G}^{S}:=\frac{1}{2}(\mathcal{G}+\mathcal{G}^{\star})$ and $\mathcal{G}^{A}:=\frac{1}{2}(\mathcal{G}-\mathcal{G}^{\star})$, where $\mathcal{G}^{\star}$ denotes the $L^{2}(\rho)$-adjoint, and that by \cite[section 2.4]{klo}, we have a correspondence between the quadratic variation of Dynkin's martingale $\tilde{M}^{\epsilon}$ and the $\mathcal{H}^{1}(\rho)$-norm of $\chi$ (utilizing also stationarity of $(Y,\eta)$ and $(Y^{\epsilon},\eta^{\epsilon})\stackrel{d}{=}(Y_{\epsilon^{-2}\cdot},\eta_{\epsilon^{-2}\cdot})$), such that
\begin{align*}
\langle\chi^{i},\mathcal{G}^{S}\chi^{j}\rangle_{\rho}=-\langle\chi^{i},\chi^{j}\rangle_{H^{1}(\rho)}=-\frac{1}{2}\E[\langle\tilde{M}^{\epsilon,i},\tilde{M}^{\epsilon,j}\rangle_{1}]=-\int[\nabla_{y}\chi^{i}\cdot\Sigma\nabla_{y}\chi^{j}+\nabla_{\eta}\chi^{i}\cdot\Gamma\Pi\nabla_{\eta}\chi^{j}]d\rho. 
\end{align*}
For a base-point $s>0$ we can do an analogue argument. 
\end{proof}

\begin{remark} 
We expect (without proof) that in fact $\tilde{A}(i,j)=\langle\chi^{i},\mathcal{G}^{A}\chi^{j}\rangle_{\rho}$ holds true and that $\mathcal{G}^{A}$ is a nontrivial operator, such that the Stratonovich area correction is truely non-vanishing. The problem to prove this is that, although we derived the form of the density $\rho$ in \cref{lem:rhostr}, the density $g_{\eta}(y)$ remains non-explicit.\\
Typically (cf.  \cite{radditive}) the area correction in the Stratonovich case can be expressed in terms of the asymmetric part of the generator of the underlying Markov process, which is a non-trivial operator if the Markov process is non-reversible. 
\end{remark}

\end{subsection}
\begin{subsection}{Tightness in $\gamma$-Hölder rough path topology}
For the convergence in distribution of the lift $(X^{\epsilon},\mathbb{X}^{\epsilon})$ to the respective lift of $X$ it is left to prove tightness in the rough path space utilizing \cref{lem:rp-kol}. We verify the necessary moment bounds in the next proposition.
\begin{proposition}\label{prop:tight1}
Let $(X^{\epsilon},Y^{\epsilon},\eta^{\epsilon})$ be as in \cref{prop:limit1}. Then the following moment bounds hold true for any $p\geqslant 2$
\begin{align}
\sup_{\epsilon}\E[\abs{X^{\epsilon,i}_t-X^{\epsilon,i}_s}^{p}]\lesssim\abs{t-s}^{p/2},\quad\forall s,t\in\Delta_{T}
\end{align} and
\begin{align}
\sup_{\epsilon}\E\bigg[\abs[\bigg]{\int_{s}^{t}X^{\epsilon, i}_{s,r} dX^{\epsilon,j}_{r}}^{p/2}\bigg]\lesssim\abs{t-s}^{p/2},\quad\forall s,t\in\Delta_{T}
\end{align} 
for $i,j\in\{1,2\}$. 
In particular, tightness of $(X^{\epsilon},\mathbb{X}^{\epsilon})$ in $C_{\gamma,T}$ for $\gamma<1/2$ follows.
\end{proposition}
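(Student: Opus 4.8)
The plan is to establish the two displayed moment bounds and then invoke \cref{lem:rp-kol} (it suffices to treat $p\geqslant 4$, the range $2\leqslant p<4$ following by Jensen's inequality). The one genuine difficulty is the prefactor $\epsilon^{-1}$ in the drift of $X^{\epsilon}$ (equivalently the $\epsilon^{-2}$ in the generator of the fast process $(Y^{\epsilon},\eta^{\epsilon})$), which rules out the direct Burkholder--Davis--Gundy argument of \cref{thm:ucvlimit}; I would therefore run a dichotomy in the increment $h:=t-s$ relative to the microscopic scale $\epsilon^{2}$, treating macroscopic increments $h>\epsilon^{2}$ by the corrector decompositions \eqref{XIto} and \eqref{eq:itint} (in which the unbounded drift has been traded for martingales plus an $O(\epsilon)$ boundary term), and microscopic increments $h\leqslant\epsilon^{2}$ by the bare system \eqref{eq:xye}, where the drift contributes only $\epsilon^{-1}h\leqslant h^{1/2}$ in $L^{p}$. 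Two facts will be used throughout: (i) stationarity and Gaussianity of $\eta^{\epsilon}$ give $\sup_{\epsilon,r}\E[\abs{\eta^{\epsilon}_{r}}^{q}]<\infty$, hence $\sup_{\epsilon,r}\E[V(\eta^{\epsilon}_{r})^{q}]<\infty$, for every $q$; (ii) since $\abs{F}\lesssim V$ by \eqref{eq:F} and $F$ is centred under $\rho$ by \cref{cor:meanzero}, \cref{prop:w1p} applied to $\chi$ (solving $\mathcal{G}\chi=-F$) gives $\abs{\chi}+\abs{\nabla_{y}\chi}+\abs{\nabla_{\eta}\chi}\lesssim V(\eta)$, so that, using \eqref{eq:Sigma} and the constancy of $\Gamma\Pi$, the martingales $M^{\epsilon}_{1},M^{\epsilon}_{2}$ of \eqref{M1}--\eqref{M2} and all stochastic integrals in \eqref{eq:itint} have quadratic--variation density $\lesssim V(\eta^{\epsilon})^{2}$ and the relevant cross--variations have density $\lesssim V(\eta^{\epsilon})$, pointwise.

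For the increment bound: if $h\leqslant\epsilon^{2}$ I would write $X^{\epsilon,i}_{s,t}=\epsilon^{-1}\int_{s}^{t}F^{i}(Y^{\epsilon}_{r},\eta^{\epsilon}_{r})\diff r+\bigl(\int_{s}^{t}\sqrt{2\Sigma(Y^{\epsilon}_{r},\eta^{\epsilon}_{r})}\diff B_{r}\bigr)^{i}$ and bound the first term by Jensen and stationarity as $\lesssim\epsilon^{-p}h^{p}=(\epsilon^{-2}h)^{p/2}h^{p/2}\leqslant h^{p/2}$, the second by Burkholder--Davis--Gundy and \eqref{eq:Sigma} as $\lesssim h^{p/2}$. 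If $h>\epsilon^{2}$ I would use \eqref{XIto}: the boundary term $\epsilon(\chi^{i}(Y^{\epsilon}_{s},\eta^{\epsilon}_{s})-\chi^{i}(Y^{\epsilon}_{t},\eta^{\epsilon}_{t}))$ has $p$-th moment $\lesssim\epsilon^{p}=(\epsilon^{2})^{p/2}\leqslant h^{p/2}$ by (i)--(ii), while Burkholder--Davis--Gundy together with the density bound $V(\eta^{\epsilon})^{2}$, the generalized Minkowski inequality and stationarity give $\E[\abs{M^{\epsilon,i}_{k}(s,t)}^{p}]\lesssim h^{p/2-1}\int_{s}^{t}\E[V(\eta^{\epsilon}_{r})^{p}]\diff r\lesssim h^{p/2}$ for $k=1,2$. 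This yields $\E[\abs{X^{\epsilon,i}_{s,t}}^{p}]\lesssim h^{p/2}$ for all $p\geqslant 2$ and all $s,t$, and in the same way $\E[\abs{M^{\epsilon,i}_{s,t}}^{p}]\lesssim h^{p/2}$.

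For the iterated--integral bound I would split $\int_{s}^{t}X^{\epsilon,i}_{s,r}\diff X^{\epsilon,j}_{r}$ into its $\diff B$--part and the drift $\epsilon^{-1}\int_{s}^{t}X^{\epsilon,i}_{s,r}F^{j}(Y^{\epsilon}_{r},\eta^{\epsilon}_{r})\diff r$. The $\diff B$--part is controlled uniformly in $h$ by Burkholder--Davis--Gundy, the generalized Minkowski inequality, \eqref{eq:Sigma} and the increment bound, giving $\lesssim h^{p/4-1}\int_{s}^{t}\E[\abs{X^{\epsilon,i}_{s,r}}^{p/2}]\diff r\lesssim h^{p/2}$. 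For the drift: if $h\leqslant\epsilon^{2}$ I would estimate crudely (Jensen, Cauchy--Schwarz, the increment bound, stationarity) as $\lesssim\epsilon^{-p/2}h^{p/2+p/4}=(\epsilon^{-2}h)^{p/4}h^{p/2}\leqslant h^{p/2}$; if $h>\epsilon^{2}$ I would invoke \eqref{eq:itint}, valid for a general base point (as noted after \cref{prop:limit1}), and bound each group of terms by $h^{p/2}$ --- the terms $M^{\epsilon,i}_{s,t}X^{\epsilon,j}_{s,t}$ and $\int_{s}^{t}X^{\epsilon,j}_{s,r}\diff M^{\epsilon,i}_{r}$ by Cauchy--Schwarz together with the increment and martingale moment bounds; the cross--variation $\langle M^{\epsilon,i},(\int_{s}^{\cdot}\sqrt{2\Sigma}\,\diff B)^{j}\rangle_{s,t}$ by its density bound $\lesssim V(\eta^{\epsilon})$ and stationarity; and $a_{F}^{\epsilon}(i,j)$ by splitting it into its $\diff r$--part (pointwise $\lesssim V(\eta^{\epsilon})^{2}$, hence of $p/2$--moment $\lesssim h^{p/2}$) and its martingale part, whose quadratic variation is $\lesssim\epsilon^{2}\int_{s}^{t}(V(\eta^{\epsilon}_{s})^{2}+V(\eta^{\epsilon}_{r})^{2})\diff r$ and which therefore has $p/2$--moment $\lesssim\epsilon^{p/2}h^{p/4}\leqslant h^{p/2}$ precisely because $h>\epsilon^{2}$. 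Having both bounds for every $p>2$, \cref{lem:rp-kol} gives $\sup_{\epsilon}\E[\norm{(X^{\epsilon},\mathbb{X}^{\epsilon})}_{\gamma'}^{p}]<\infty$ for $\gamma'<1/2$; since $X^{\epsilon}_{0}$ is bounded uniformly in $\epsilon$ (so that $(X^{\epsilon}_{0})_{\epsilon}$ is tight), the compact embedding $C_{\gamma',T}\hookrightarrow C_{\gamma,T}$ then yields tightness of $(X^{\epsilon},\mathbb{X}^{\epsilon})$ in $C_{\gamma,T}$ for every $\gamma<\gamma'$.

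The step I expect to be the main obstacle is the prefactor $a_{F}^{\epsilon}$ in \eqref{eq:itint}: its martingale component carries a factor $\epsilon$ times a corrector increment of size only $O(V(\eta^{\epsilon}))$, so its natural estimate $\epsilon^{p/2}h^{p/4}$ outperforms $h^{p/2}$ only on macroscopic increments, which is exactly what forces the $h$-versus-$\epsilon^{2}$ dichotomy; one then has to make sure that the complementary microscopic estimate (via the bare drift $\epsilon^{-1}\int_{s}^{t}X^{\epsilon,i}_{s,r}F^{j}\diff r$) really produces $h^{p/2}$ and not a worse power. Everything else is a routine combination of Burkholder--Davis--Gundy, Hölder/Minkowski, \eqref{eq:Sigma}--\eqref{eq:F} and the moment bounds on $V(\eta^{\epsilon})$ and on $\chi,\nabla\chi$.
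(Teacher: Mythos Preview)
Your proposal is correct and follows essentially the same strategy as the paper: the dichotomy $h\lessgtr\epsilon^{2}$ for the increment bound is exactly the paper's trade-off between the raw estimate \eqref{eq:firsteq} and the corrector estimate \eqref{eq:seceq}, and your treatment of the iterated integral via \eqref{eq:itint} is what the paper has in mind when it calls that step ``immediate''. If anything you are more careful than the paper, which does not spell out that the martingale part of $a_{F}^{\epsilon}$ (with its $\epsilon^{p/2}h^{p/4}$ bound) is precisely what forces the dichotomy to be rerun for the second-level estimate.
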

\begin{proof}
Let $p\geqslant 2$. First, utilizing the growth condition  (\ref{eq:F}) on $F$, finiteness of moments of $\eta$, Burkholder-Davis-Gundy inequality and boundedness of $\Sigma$ in  (\ref{eq:Sigma}), we obtain
\begin{align}\label{eq:firsteq}
\E[\abs{X_{t}^{\epsilon}-X^{\epsilon}_{s}}^{p}]&{\leqslant\E\bigg[\abs[\bigg]{\epsilon^{-1}\int_{s}^{t}F(Y^{\epsilon}_{s},\eta^{\epsilon}_{s})ds}^{p}\bigg]+\E\bigg[\abs[\bigg]{\int_{s}^{t}\sqrt{2\Sigma}(Y^{\epsilon}_{s},\eta^{\epsilon}_{s})dB_{s}}^{p}\bigg]}\nonumber\\&\lesssim \epsilon^{-p}\abs{t-s}^{{p}}+\abs{t-s}^{p/2}.
\end{align}
Secondly,  using the representation \eqref{eq:itint}, we conclude
\begin{align*}
\E[\abs{X^{\epsilon,i}_{t}-X^{\epsilon,i}_{s}}^{p}]&\leqslant\epsilon^{p}\E[\abs{\chi^{i}(Y^{\epsilon}(t),\eta^{\epsilon}(t))-\chi^{i}(Y^{\epsilon}(s),\eta^{\epsilon}(s))}^{p}]+\E[\abs{M^{\epsilon,i}(t)-M^{\epsilon,i}(s)}^{p}],
\end{align*}
where the martingale is given by $M^{\epsilon}=M_1^{\epsilon}+M_{2}^{\epsilon}$ with
\begin{align*}
M_1^{\epsilon,i}(t) &:= \int_0^t (\nabla_y \chi^{i} + e_{i}) \cdot\sqrt{2 \Sigma}(Y^{\epsilon}(s),\eta^{\epsilon}(s)) dB_s \\
M_2^{\epsilon,i}(t) &:= \int_0^t  \nabla_\eta \chi^{i}(Y^{\epsilon}(s),\eta^{\epsilon}(s))\cdot \sqrt{2 \Gamma \Pi}dW_{s}. 
\end{align*}
Burkholder-Davis-Gundy (BDG) and Minkowski inequality, together with the stationarity of $(Y^{\epsilon},\eta^{\epsilon})$ yield
\begin{align*}
\E[\abs{M^{\epsilon}(t)-M^{\epsilon}(s)}^{p}]\lesssim\abs{t-s}^{p/2}E_{\rho}[\abs{(\nabla_{y}\chi+I)^{T}\Sigma(\nabla_{y}\chi+I)+(\nabla_{\eta}\chi)^{T}\Gamma\Pi\nabla_{\eta}\chi}^{p/2}].
\end{align*} 
Boundedness of $\Sigma$ stated in (\ref{eq:Sigma}) and \cref{prop:w1p}, i.e. $\nabla_{y}\chi+\nabla_{\eta}\chi\in L^{p}(\rho)$ for any $p\geqslant 2$, imply finiteness of the expectation on the right-hand-side. 

\noindent Moreover, according to \cref{prop:w1p}, $\chi$ satisfies a growth condition in $\eta$, which we use to estimate the boundary term (as well as stationarity of $\eta^{\epsilon}$ and that $\eta_{0}$ has all moments under $\rho$, as it is the normal distribution $N(0,\Pi)$), obtaining 
\begin{align}\label{eq:seceq}
\E[\abs{X^{\epsilon,i}_{t}-X^{\epsilon,i}_{s}}^{p}]&\lesssim\epsilon^{p}+\abs{t-s}^{p/2}.
\end{align}
By combining the estimates \eqref{eq:firsteq} and \eqref{eq:seceq}, using the first one for $\epsilon>\abs{t-s}^{1/2}$ and the second one for $\epsilon<\abs{t-s}^{1/2}$, we conclude
\begin{align*}
\sup_{\epsilon\in [0,1]}\E[\abs{X^{\epsilon,i}_{t}-X^{\epsilon,i}_{s}}^{p}]\lesssim\abs{t-s}^{p/2}.
\end{align*} 
The estimate for the iterated integrals is then immediate by the estimate on the moments of $X^{\epsilon,j}$ and the decomposition of the iterated integral in \eqref{eq:itint}, as well as the boundedness of the quadratic variation of the martingale $M^{\epsilon,i}$ in $L^{p/2}(\rho)$ for any $p\geqslant 2$.\\
The conclusion on tightness in $C_{\gamma,T}$ for $\gamma<1/2$ follows from \cref{lem:rp-kol}.
\end{proof}
\end{subsection}

\begin{subsection}{Rough homogenization limit in the $(\alpha,\beta)=(1,2)$-regime}
In this subsection, we state one of our main theorems, which is the corollary of \cref{prop:limit1} and \cref{prop:tight1}.
\begin{theorem}[Itô-lift]\label{thm:ito1}
Let $(X^{\epsilon},Y^{\epsilon},\eta^{\epsilon})$, $X$ and $\mathbb{X}^{\epsilon},\mathbb{X}$ be as in \cref{prop:limit1}. Then for any $\gamma<1/2$, $(X^{\epsilon},\mathbb{X}^{\epsilon}) $ weakly converges  in the $\gamma$-Hölder rough paths space, as $\epsilon \to 0$
\begin{align}
(X^{\epsilon},\mathbb{X}^{\epsilon}) \Rightarrow (X,(s,t)\mapsto\mathbb{X}_{s,t}+A(t-s)),
\end{align}
where $A=(A(i,j))_{i,j\in\{1,2\}}$ for
\begin{align}
A(i,j)=\langle\chi^{i},(\mathcal{G}\chi)^{j}\rangle_{\rho}+\langle X^{i},X^{j}\rangle_{1}-\int (e_{i}+\nabla_{y}\chi^{i})\cdot 2\Sigma e_{j}d\rho
\end{align} 
and $\chi$ being the solution of $\mathcal{G}\chi=-F$.
\end{theorem}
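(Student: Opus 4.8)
The statement is essentially a corollary of the two preceding propositions, so the plan is to assemble them in the standard way. \cref{prop:limit1} gives the weak convergence of the iterated integrals $\mathbb{X}^{\epsilon}_{s,t}$ for each fixed $(s,t)$ to $\mathbb{X}_{s,t}+A(t-s)$, and \cref{prop:tight1} gives, via \cref{lem:rp-kol}, tightness of $(X^{\epsilon},\mathbb{X}^{\epsilon})$ in $C_{\gamma,T}$ for every $\gamma<1/2$. Convergence of all finite-dimensional marginals together with tightness in the path space yields weak convergence; the only point requiring care is that $C_{\gamma,T}$ is not separable, so this implication has to be argued through subsequences rather than invoked verbatim.

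First I would check that the candidate limit $(X,(s,t)\mapsto\mathbb{X}_{s,t}+A(t-s))$ is almost surely a $\gamma$-Hölder rough path. Since $X=\sqrt{2D}Z$ is a Brownian motion and $\mathbb{X}$ its Itô lift, the pair $(X,\mathbb{X})$ lies in $C_{\gamma,T}$ a.s.\ for every $\gamma<1/2$; the matrix $A$ is deterministic, so $\lvert A(t-s)\rvert\leqslant\lvert A\rvert T^{1-2\gamma}\lvert t-s\rvert^{2\gamma}$ keeps the $2\gamma$-Hölder seminorm finite, and the added term is compatible with Chen's relation because $A(t-r)-A(s-r)-A(t-s)=0$. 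This is exactly the observation recorded in the remark following the definition of a rough path.

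Next I would upgrade the pointwise convergence of \cref{prop:limit1} to joint convergence of $(X^{\epsilon}_{t_{1}},\dots,X^{\epsilon}_{t_{n}},\mathbb{X}^{\epsilon}_{s_{1},t_{1}},\dots,\mathbb{X}^{\epsilon}_{s_{m},t_{m}})$ for arbitrary finite collections of times. This is where the decomposition \eqref{eq:itint} does the work: each iterated integral is the sum of $a_{F}^{\epsilon}$, a boundary--martingale product, an Itô integral against $M^{\epsilon}$, and a bracket term. The bracket term and $a_{F}^{\epsilon}$ converge in probability by the ergodic theorem for $(Y,\eta)$ together with \cref{cor:meanzero} and the identity $(Y^{\epsilon},\eta^{\epsilon})\stackrel{d}{=}(Y_{\epsilon^{-2}\cdot},\eta_{\epsilon^{-2}\cdot})$, while the remaining terms converge jointly in distribution via \cref{prop:UCV}: $(X^{\epsilon})$ satisfies the UCV condition (by boundedness of $\Sigma$, \eqref{eq:Sigma}, the bound on $\langle\tilde M^{\epsilon}\rangle$ from \cref{prop:w1p}, and \eqref{eq:F}), and $(M^{\epsilon},X^{\epsilon})\Rightarrow(X-X_{0},X)$ jointly in $C([0,T],\R^{4})$ by \eqref{XIto} and \cite[Theorem 7]{dep}. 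All these convergences hold simultaneously over finitely many time points, so Slutsky's lemma gives the joint limit, whose law matches the finite-dimensional distributions of $(X,(s,t)\mapsto\mathbb{X}_{s,t}+A(t-s))$.

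Finally I would conclude by a subsequence argument. By \cref{prop:tight1} the laws of $(X^{\epsilon},\mathbb{X}^{\epsilon})$ are tight in $C_{\gamma,T}$, so along any subsequence there is a further subsequence converging weakly in $C_{\gamma,T}$, hence also in the Polish space $C([0,T],\R^{2})\times C(\Delta_{T},\R^{2\times2})$ with the topology of local uniform convergence. The previous step identifies the finite-dimensional distributions of every subsequential limit with those of $(X,(s,t)\mapsto\mathbb{X}_{s,t}+A(t-s))$; since finite-dimensional distributions determine laws on that Polish space and the limiting object is a.s.\ continuous, all subsequential limits agree, and therefore the full family converges weakly in $C_{\gamma,T}$ to $(X,(s,t)\mapsto\mathbb{X}_{s,t}+A(t-s))$. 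I expect the main (mild) obstacle to be precisely this non-separability bookkeeping — carefully combining Hölder tightness with finite-dimensional convergence through subsequences; everything else is a reorganization of estimates already supplied by \cref{prop:limit1} and \cref{prop:tight1}.
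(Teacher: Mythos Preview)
Your proposal is correct and follows essentially the same route as the paper: invoke \cref{prop:limit1} for the marginals of $\mathbb{X}^{\epsilon}$, upgrade to finite-dimensional joint convergence by noting that the UCV part of the decomposition \eqref{eq:itint} actually yields process-level convergence via \cref{prop:UCV} while the remaining terms converge in probability, and combine with the tightness from \cref{prop:tight1}. The paper's proof is terser and does not spell out the non-separability bookkeeping or the verification that the limit is a rough path, but the argument is the same.
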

\begin{proof}
From \cref{prop:limit1}  the convergence of the one dimensional distributions of $(X^{\epsilon},\mathbb{X}^{\epsilon})_{\epsilon}$, that is of $(X^{\epsilon}_{t})$ and $(\mathbb{X}^{\epsilon}_{s,t})_{\epsilon}$ for any $0\leqslant s<t\leqslant T$, follows. By weak convergence of $(X^{\epsilon})$, it follows in particular convergence of the finite dimensional distributions. For the finite dimensional distributions of $\mathbb{X}^{\epsilon}$, we use the same argument as for the one dimensional distributions, noticing that the convergence in \eqref{eq:itint} of the part for which we applied the UCV condition, is also true as weak convergence of processes in $C(\Delta_{T},\R^{2\times 2})$ (due to the convergence of the processes from \cref{prop:UCV}) and the remaining terms converge in probability. Furthermore, \cref{prop:tight1} yields the tightness in the rough path space $C_{\gamma,T}$ for $\gamma<1/2$. Together, we obtain the weak convergence in $C_{\gamma,T}$ for $\gamma<1/2$: 
\begin{equation*}
(X^{\epsilon},\mathbb{X}^{\epsilon})\Rightarrow(X,(s,t)\mapsto\mathbb{X}_{s,t}+A(t-s)),
\end{equation*}
as claimed.
\end{proof}
\begin{corollary}[Stratonovich-lift]\label{cor:strato1}
Let $(X^{\epsilon},Y^{\epsilon},\eta^{\epsilon})$, $X$ and $\tilde{\mathbb{X}}^{\epsilon},\tilde{\mathbb{X}}$ be as in \cref{cor:limit1}. Then for any $\gamma<1/2$, we have the weak convergence in the rough path space $C_{\gamma,T}$
\begin{align}
(X^{\epsilon},\tilde{\mathbb{X}}^{\epsilon}) \Rightarrow (X,(s,t)\mapsto\tilde{\mathbb{X}}_{s,t}+\tilde{A}(t-s))
\end{align} when $\epsilon\to 0$, where
\begin{align*}
\tilde{A}(i,j)=\langle\chi^{i},\mathcal{G}^{A}\chi^{j}\rangle_{\rho}+\int e_{i}\cdot \Sigma \nabla_{y}\chi^{j}d\rho-\int\nabla_{y}\chi^{i}\cdot\Sigma e_{j}d\rho,
\quad\mathcal{G}\chi=-F.
\end{align*} 
\end{corollary}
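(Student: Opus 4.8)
The plan is to deduce the Stratonovich statement directly from the Itô-lift theorem (\cref{thm:ito1}) together with the Itô--Stratonovich correction computed in \cref{cor:limit1}. The key algebraic identity is
\begin{align*}
\tilde{\mathbb{X}}^{\epsilon}_{s,t}(i,j)=\mathbb{X}^{\epsilon}_{s,t}(i,j)+\tfrac{1}{2}\paren[\big]{\langle X^{\epsilon,i},X^{\epsilon,j}\rangle_{t}-\langle X^{\epsilon,i},X^{\epsilon,j}\rangle_{s}},
\end{align*}
which holds pathwise (in the $L^{2}(\p)$ sense) for the continuous semimartingale $X^{\epsilon}$. In other words, the enhanced process $(X^{\epsilon},\tilde{\mathbb{X}}^{\epsilon})$ is obtained from $(X^{\epsilon},\mathbb{X}^{\epsilon})$ by adding the symmetric, finite-variation ``bracket'' increment $\tfrac12\langle X^{\epsilon,i},X^{\epsilon,j}\rangle$. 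So if I can show that this correction term converges, jointly with $(X^\epsilon,\mathbb X^\epsilon)$, in a sufficiently strong topology, then the continuous mapping theorem applied to the (continuous) addition map on rough-path space gives the claim.

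First I would record, as in the proof of \cref{cor:limit1}, that by the ergodic theorem for $(Y,\eta)$ (\cite[Theorem 3.3.1]{DaPrato1996}) together with the scaling identity $(Y^{\epsilon}_{t},\eta^{\epsilon}_{t})_{t}\stackrel{d}{=}(Y_{\epsilon^{-2}t},\eta_{\epsilon^{-2}t})_{t}$, one has
\begin{align*}
\tfrac{1}{2}\langle X^{\epsilon,i},X^{\epsilon,j}\rangle_{t}=\int_{0}^{t}(e_{i}\cdot\Sigma e_{j})(Y^{\epsilon}_{s},\eta^{\epsilon}_{s})\,ds \;\longrightarrow\; t\int e_{i}\cdot\Sigma e_{j}\,d\rho
\end{align*}
in probability, locally uniformly in $t$; moreover this convergence is of a finite-variation process to a linear (hence finite-variation) limit, with the one-variation of $\tfrac12\langle X^{\epsilon,i},X^{\epsilon,j}\rangle$ uniformly bounded in $\epsilon$ by boundedness of $\Sigma$, \eqref{eq:Sigma}. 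Next I would combine this with \cref{thm:ito1}: since $(X^{\epsilon},\mathbb{X}^{\epsilon})\Rightarrow(X,(s,t)\mapsto\mathbb{X}_{s,t}+A(t-s))$ weakly in $C_{\gamma,T}$, and the bracket correction converges in probability to a deterministic limit, Slutzky's lemma yields joint convergence of the triple, and therefore weak convergence of $(X^{\epsilon},\tilde{\mathbb{X}}^{\epsilon})$ in $C_{\gamma,T}$ to $(X,(s,t)\mapsto \mathbb{X}_{s,t}+A(t-s)+(t-s)\int e_{i}\cdot\Sigma e_{j}\,d\rho)$. It remains to identify this limiting second level as $\tilde{\mathbb{X}}_{s,t}+\tilde{A}(t-s)$: since $X=\sqrt{2D}Z$ is a Brownian motion, $\tilde{\mathbb{X}}_{s,t}=\mathbb{X}_{s,t}+\tfrac12\langle X^{i},X^{j}\rangle_{1}(t-s)=\mathbb{X}_{s,t}+(t-s)\,D(i,j)$, so the area correction is $\tilde{A}(i,j)=A(i,j)+\int e_{i}\cdot\Sigma e_{j}\,d\rho-D(i,j)$, and inserting the formula for $A(i,j)$ from \cref{thm:ito1}, the formula $D(i,j)=\langle(e_{i}+\nabla_{y}\chi^{i}),\Sigma(e_{j}+\nabla_{y}\chi^{j})\rangle_{\rho}+\langle\nabla_{\eta}\chi^{i},\Gamma\Pi\nabla_{\eta}\chi^{j}\rangle_{\rho}$ from \eqref{effectiveDiff}, and the $H^{1}(\rho)$-identity $\langle\chi^{i},\mathcal{G}^{S}\chi^{j}\rangle_{\rho}=-\langle\nabla_{y}\chi^{i},\Sigma\nabla_{y}\chi^{j}\rangle_{\rho}-\langle\nabla_{\eta}\chi^{i},\Gamma\Pi\nabla_{\eta}\chi^{j}\rangle_{\rho}$ established in \cref{cor:limit1}, the terms rearrange (writing $\mathcal{G}=\mathcal{G}^{S}+\mathcal{G}^{A}$) precisely into $\tilde{A}(i,j)=\langle\chi^{i},\mathcal{G}^{A}\chi^{j}\rangle_{\rho}+\int e_{i}\cdot\Sigma\nabla_{y}\chi^{j}\,d\rho-\int\nabla_{y}\chi^{i}\cdot\Sigma e_{j}\,d\rho$.

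The only genuinely nontrivial point — and the step I expect to write most carefully — is the passage to joint weak convergence in the rough-path topology $C_{\gamma,T}$ rather than merely in finite-dimensional distributions: one needs that adding the bracket correction does not destroy tightness. But this is immediate, because tightness of $(X^{\epsilon},\tilde{\mathbb{X}}^{\epsilon})$ in $C_{\gamma,T}$ follows from \cref{prop:tight1} exactly as for the Itô lift — the moment bound on $\int_{s}^{t}X^{\epsilon,i}_{s,r}\circ dX^{\epsilon,j}_{r}$ differs from that on the Itô integral only by $\tfrac12(\langle X^{\epsilon,i},X^{\epsilon,j}\rangle_{t}-\langle X^{\epsilon,i},X^{\epsilon,j}\rangle_{s})$, whose $L^{p/2}(\p)$-norm is $\lesssim|t-s|^{p/2}$ by boundedness of $\Sigma$ — so \cref{lem:rp-kol} applies verbatim. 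Combining tightness in $C_{\gamma,T}$ with convergence of the finite-dimensional distributions (which follows as in the proof of \cref{thm:ito1}, since the bracket correction converges in probability) gives the asserted weak convergence, and an analogous argument with a general base point $s>0$ completes the proof.
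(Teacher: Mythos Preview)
Your proposal is correct and follows essentially the same route as the paper: the paper's proof is a single line stating that the result ``follows immediately from \cref{cor:limit1} and \cref{thm:ito1}'', and you have simply spelled out what ``immediately'' means --- namely, that the It\^o--Stratonovich bracket correction converges in probability (as already shown in \cref{cor:limit1}), that tightness in $C_{\gamma,T}$ survives adding this bounded correction, and that the algebraic identification of $\tilde A$ was already carried out in the proof of \cref{cor:limit1}. Your explicit treatment of tightness for the Stratonovich lift is a useful addition the paper leaves implicit.
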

\begin{proof}
The proof follows immediately from \cref{cor:limit1} and \cref{thm:ito1}.
\end{proof}

\end{subsection}
\end{section}

\begin{section}{Diffusion on a membrane with comparable spatial and temporal fluctuations}\label{sect:hom2}
In this section we consider the space-time scaling regime when $\alpha=1,\beta=1$, which means that we observe spatial and temporal fluctuations of a membrane of comparable size. Then the general system becomes
\begin{empheq}[left=\empheqlbrace]{align}\label{eq:system3}
 dX^\epsilon_t & = \frac{1}{\epsilon}F(Y^\epsilon_t, \eta^\epsilon_t ) dt + \sqrt{2 \Sigma (Y^\epsilon_t, \eta^\epsilon_t )} dB_t,  \nonumber\\
 dY^\epsilon_t & = \frac{1}{\epsilon^2}F(Y^\epsilon_t, \eta^\epsilon_t ) dt + \sqrt{\frac{2}{\epsilon^2} \Sigma (Y^\epsilon_t, \eta^\epsilon_t )} dB_t, \\
 d \eta^\epsilon_t & = - \frac{1}{\epsilon} \Gamma \eta^\epsilon_t dt + \sqrt{\frac{2}{\epsilon} \Gamma \Pi} dW_t\nonumber, 
\end{empheq}
where again $B$ and $W$ are independent Brownian motions and $Y^{\epsilon}:=\epsilon^{-1}X^{\epsilon}\text{ mod }\mathbb{T}^{2}$.\\
To determine the limit in this regime, the difficulty is that $Y$ and $\eta$ now fluctuate at different scales, which means  that, compared to the previous section, we no longer have the generator $\epsilon^{-2}\mathcal{G}$ for the joint Markov process $(Y^{\epsilon},\eta^{\epsilon})$, but the generator $\epsilon^{-2}\mathcal{L}_{0}+\epsilon^{-1}\mathcal{L}_{\eta}$ for $\mathcal{L}_{0}=\mathcal{L}_{0}(\eta)$ from \eqref{eq:L0} and $\mathcal{L}_{\eta}$ from \eqref{OUgen}. The idea is to first deduce a quenched result for each fixed environment $\eta$ and afterwards average over the invariant measure $\rho_{\eta}$ of $\eta$.

\noindent Let for $\eta\in\R^{2K}$, 
\begin{equation*}
\rho_{Y}(dy,\eta):=C^{-1}\sqrt{\abs{\Sigma^{-1}}(y,\eta)}dy
\end{equation*}
be a probability measure on $\mathbb{T}^{2}$, where $C:=\int_{\mathbb{T}^{2}} \sqrt{\abs{\Sigma^{-1}}(y,\eta)}dy$ is the normalizing constant. It is straightforward to  check that $\rho_{Y}(\cdot,\eta)$ is invariant for {$\mathcal{L}_{0}(\eta)$}, that is $\langle \mathcal{L}_{0}(\eta) f\rangle_{\rho_{Y}(\cdot,\eta)}=0$ for all $f\in dom(\mathcal{L}_{0})\subset L^{2}(\rho_{Y}(\cdot,\eta))$, and that $\int F(y,\eta)\rho_{Y}(dy,\eta)=0$ by the definition \eqref{eq:defF} of $F$. 

\noindent Let now, by \cite[Proposition A.2.2]{duncan}, for any $\eta\in\R^{K}$, $\chi(\cdot,\eta)\in C^{2}(\mathbb{T}^{2},\R^{2})$ be the unique solution of
\begin{equation}
 \mathcal{L}_{0}(\eta)\chi(\cdot,\eta)=-F(\cdot,\eta), \label{eq:eta}
 \end{equation} with $\int\chi(y,\eta)\rho_{Y}(dy,\eta)=0$. Existence of $\chi$ is based on the $L^{2}(\rho_{Y}(\cdot,\eta))$-spectral gap estimates for $\mathcal{L}_{0}(\eta)$ from \cite[Lemma A.2.1]{duncan}. That is, there exists a constant $\lambda(\eta)>0$ such that for all $f\in L^{2}(\rho_{Y}(\cdot,\eta))$,
\begin{align*}
\norm{P_{t}^{0}f-\langle f\rangle_{\rho_{Y}(\cdot,\eta)}}_{L^{2}(\rho_{Y}(\cdot,\eta))}\leqslant e^{-\lambda(\eta) t}\norm{f}_{L^{2}(\rho_{Y}(\cdot,\eta))},
\end{align*}
where $(P^{0}_{t})_{t\geqslant 0}=(P^{0}_{t}(\eta))_{t\geqslant 0}$ denotes the semigroup on $L^{2}(\rho_{Y}(\cdot,\eta))$ associated to $\mathcal{L}_{0}(\eta)$. Furthermore, according to \cite[Proposition A.2.2]{duncan}, the solution $\chi$ will be smooth in the $\eta$-variable (as $F$ is smooth) and satisfies $\abs{\nabla_{\eta}^{k}\chi(y,\eta)}\leqslant C_{k} (1+\abs{\eta}^{l_{k}})$ for $k=0,1,2$ and $l_{k}\geqslant 1$ and a constant $C_{k}>0$ (due to $F$ satisfying such a growth condition with a possibly different constant $C_{k}$).\\ 
Then, we can decompose the drift part of $X^{\epsilon}$ with the help of that solution $\chi$, yielding 
\begin{align}\label{eq:driftF}
\epsilon^{-1}\int_{0}^{t}F^{i}(Y^{\epsilon}_{s},\eta^{\epsilon}_{s})ds
&=\epsilon({\chi^{i}(Y^{\epsilon}_{0},\eta^{\epsilon}_{0})-\chi^{i}(Y_{t}^{\epsilon},\eta_{t}^{\epsilon})})+\sqrt{\epsilon}\int_{0}^{t}\nabla_{\eta}\chi^{i}(Y^{\epsilon}_{r},\eta^{\epsilon}_{r})\cdot\sqrt{2\Gamma\Pi}dW_{r}\nonumber\\
&\quad+\int_{0}^{t}\nabla_{y}\chi^{i}(Y^{\epsilon}_{r},\eta^{\epsilon}_{r})\cdot\sqrt{2\Sigma(Y^{\epsilon}_{r},\eta^{\epsilon}_{r})}dB_{r}\\
&\quad+\int_{0}^{t}(\mathcal{L}_{\eta}\chi)^{i} (Y^{\epsilon}_{s},\eta^{\epsilon}_{s})ds\nonumber
\end{align} for $i=1,2$.
Plugging \eqref{eq:driftF} into the dynamics for $X^{\epsilon}$, one can deduce that $(X^{\epsilon})$ converges in distribution to a Brownian motion with variance $2Dt$ plus a constant drift $L$. This was proven in \cite[Theorem 5.2.2]{duncan}, namely
\begin{equation*}
(X^{\epsilon}_{t})_{t\geqslant 0}\Rightarrow (tL +\sqrt{2D}Z_{t})_{t\geqslant 0}
\end{equation*}
for a standard Brownian motion $Z$ and where
\begin{align*}
D= \int\int (I + \nabla_y \chi)^{T}\Sigma(I + \nabla_y \chi)\rho_{Y}(dy, \eta)\rho_{\eta}(d\eta)
\end{align*} and 
\begin{align*}
L=\int\int\mathcal{L}_{\eta}\chi(y,\eta)\rho_{Y}(dy,\eta)\rho_{\eta}(d\eta).
\end{align*}
Indeed, the drift term $L$ arises from the convergence of the last term in \eqref{eq:driftF}, that is
\begin{align}\label{eq:L}
\int_{0}^{t} \mathcal{L}_{\eta}\chi(Y^{\epsilon}_{s},\eta^{\epsilon}_{s})ds\to t\int\int\mathcal{L}_{\eta}\chi(y,\eta)\rho_{Y}(dy,\eta)\rho_{\eta}(d\eta)=:tL
\end{align} in probability, which motivates the ergodic theorem for $(Y^{\epsilon},\eta^{\epsilon})$, \cref{prop:mainprop}, below.

\noindent For completeness, we state the result here, its proof follows from the result from \cite[Lemma A.2.3]{duncan}. One can also deduce its claim from decomposing the additive functional in terms of the solution $G(\cdot,\eta)$ of the Poisson equation (analogously as in \eqref{eq:driftF})
\begin{align*}
\mathcal{L}_{0}G(\cdot,\eta)=b (\cdot,\eta)-E_{\rho_{Y}(\cdot,\eta)}[b(\cdot,\eta)]
\end{align*} for fixed $\eta\in\R^{2K}$ (existence follows by the $L^{2}(\rho_{Y}(\cdot,\eta))$-spectral gap estimates on $\mathcal{L}_{0}(\eta)$, \cite[Lemma A.2.1]{duncan}) and utilizing the ergodic theorem for the Ornstein Uhlenbeck process $(\eta_{t})_{t\geqslant 0}$ started in $\rho_{\eta}$. The growth assumption on $b$ in the following proposition is needed to obtain an analogue growth condition on $G$, such that the martingale term and the drift part involving $\mathcal{L}_{\eta}G$ in the decomposition vanish in $L^{2}(\p)$. 
\begin{proposition}\label{prop:mainprop}
Let $b:\mathbb{T}^{2}\times\R^{2K}\to\R$ be such that 
$b(y,\cdot)\in C^{2}(\R^{2K})$ satisfies the growth assumption
 \begin{equation}\label{bGrowth}
 \abs{\nabla_{\eta}^{k}b(y,\eta)}\lesssim_{k} 1+\abs{\eta}^{l_{k}}
\end{equation}
 for some $l_{k}\geqslant 1$ and all $k=0,1,2$.\\
Then the following convergence in probability holds true when $\epsilon\to 0$
\begin{align}
\int_{0}^{t}b(Y^{\epsilon}_{s},\eta^{\epsilon}_{s})ds\to t\int E_{\rho_{Y}(\cdot,\eta)}[b(\cdot,\eta)]_{\mid \eta=\tilde{\eta}}\rho_{\eta}(d\tilde{\eta}).
\end{align} 
(Here $E_{\rho}[\cdot]$ denotes integration with respect to a probability measure $\rho$ on $\mathbb{T}^{2}$, respectively $\R^{2K}$.)\\
\end{proposition}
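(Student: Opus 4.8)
The plan is to run the classical two-scale argument for a fast process whose generator splits as $\epsilon^{-2}\mathcal{L}_{0}+\epsilon^{-1}\mathcal{L}_{\eta}$: first remove the $y$-oscillations by a corrector in the frozen-$\eta$ variable, then average the resulting $\eta$-functional by the ergodic theorem for the Ornstein--Uhlenbeck process. Throughout I write $\psi(\eta):=E_{\rho_{Y}(\cdot,\eta)}[b(\cdot,\eta)]$ and $\bar b(y,\eta):=b(y,\eta)-\psi(\eta)$, so that $\bar b(\cdot,\eta)$ is centred under $\rho_{Y}(\cdot,\eta)$ for every fixed $\eta$; the target limit (for fixed $t$) is $t\,E_{\rho_{\eta}}[\psi]$.

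\emph{Step 1 (corrector in $y$).} For each $\eta$ I would take $G(\cdot,\eta)\in C^{2}(\mathbb{T}^{2},\R)$ to be the unique $\rho_{Y}(\cdot,\eta)$-centred solution of $\mathcal{L}_{0}(\eta)G(\cdot,\eta)=\bar b(\cdot,\eta)$, which exists by the $L^{2}(\rho_{Y}(\cdot,\eta))$-spectral gap for $\mathcal{L}_{0}(\eta)$ of \cite[Lemma A.2.1]{duncan}; by \cite[Proposition A.2.2]{duncan} it is smooth in $\eta$ and satisfies $\abs{\nabla_{y}^{j}\nabla_{\eta}^{k}G(y,\eta)}\lesssim 1+\abs{\eta}^{m}$ for $j+k\le 2$, uniformly in $y$, with $m$ controlled by the exponents $l_{k}$ in \eqref{bGrowth}. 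Since $B$ and $W$ are independent, the integrated Itô formula for $G(Y^{\epsilon}_{t},\eta^{\epsilon}_{t})$, multiplied by $\epsilon^{2}$ and rearranged using $\mathcal{L}_{0}G=\bar b$, gives
\begin{align*}
\int_{0}^{t}\bar b(Y^{\epsilon}_{s},\eta^{\epsilon}_{s})\,ds=\epsilon^{2}\big(G(Y^{\epsilon}_{t},\eta^{\epsilon}_{t})-G(Y^{\epsilon}_{0},\eta^{\epsilon}_{0})\big)-\epsilon\int_{0}^{t}(\mathcal{L}_{\eta}G)(Y^{\epsilon}_{s},\eta^{\epsilon}_{s})\,ds-\epsilon^{2}N^{\epsilon}_{t},
\end{align*}
where $N^{\epsilon}_{t}:=\int_{0}^{t}\nabla_{y}G(Y^{\epsilon}_{s},\eta^{\epsilon}_{s})\cdot\sqrt{2\Sigma/\epsilon^{2}}\,dB_{s}+\int_{0}^{t}\nabla_{\eta}G(Y^{\epsilon}_{s},\eta^{\epsilon}_{s})\cdot\sqrt{2\Gamma\Pi/\epsilon}\,dW_{s}$. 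I would then estimate the three right-hand terms using the growth bounds on $G$ and its derivatives, the boundedness \eqref{eq:Sigma} of $\Sigma$, and the fact that $\eta^{\epsilon}$ is stationary and Gaussian, so all its moments are bounded uniformly in $\epsilon,t$: the boundary term is $O(\epsilon^{2})$ in $L^{1}$, the $\mathcal{L}_{\eta}G$-term (which involves $\abs{\eta}\abs{\nabla_{\eta}G}$ and $\abs{\nabla_{\eta}^{2}G}$) is $O(\epsilon)$ in $L^{1}$, and $\E[\sup_{s\le t}\abs{\epsilon^{2}N^{\epsilon}_{s}}^{2}]\lesssim\epsilon^{2}t+\epsilon^{3}t$ by Doob/BDG. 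Hence $\int_{0}^{t}\bar b(Y^{\epsilon}_{s},\eta^{\epsilon}_{s})\,ds\to 0$ in probability, so $\int_{0}^{t}b(Y^{\epsilon}_{s},\eta^{\epsilon}_{s})\,ds$ and $\int_{0}^{t}\psi(\eta^{\epsilon}_{s})\,ds$ share the same limit.

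\emph{Step 2 (averaging in $\eta$).} It remains to identify $\lim_{\epsilon\to 0}\int_{0}^{t}\psi(\eta^{\epsilon}_{s})\,ds$. Since $h(\cdot,\eta)$ is linear in $\eta$ we have $\sqrt{\abs{\Sigma^{-1}}(y,\eta)}=\sqrt{1+\abs{\nabla_{y}h(y,\eta)}^{2}}\lesssim 1+\abs{\eta}$ uniformly in $y$, and $C(\eta)=\int_{\mathbb{T}^{2}}\sqrt{\abs{\Sigma^{-1}}(y,\eta)}\,dy\ge 1$; together with \eqref{bGrowth} for $k=0$ this yields $\abs{\psi(\eta)}\lesssim 1+\abs{\eta}^{l_{0}+1}$, hence $\psi\in L^{1}(\rho_{\eta})$ by the Gaussian tails of $\rho_{\eta}=N(0,\Pi)$. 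Using $(\eta^{\epsilon}_{s})_{s\ge 0}\stackrel{d}{=}(\eta_{s/\epsilon})_{s\ge 0}$, where $(\eta_{u})_{u\ge 0}$ is the stationary (and, by its spectral gap, ergodic) Ornstein--Uhlenbeck process with law $\rho_{\eta}$, a time change gives
\begin{align*}
\int_{0}^{t}\psi(\eta^{\epsilon}_{s})\,ds\stackrel{d}{=}t\cdot\frac{\epsilon}{t}\int_{0}^{t/\epsilon}\psi(\eta_{u})\,du\ \xrightarrow{\ \epsilon\to 0\ }\ t\,E_{\rho_{\eta}}[\psi]
\end{align*}
by Birkhoff's ergodic theorem \cite[Theorem 3.3.1]{DaPrato1996} with $T=t/\epsilon\to\infty$; as the limit is deterministic, this convergence in law is convergence in probability. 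Combining Steps 1 and 2 and writing $E_{\rho_{\eta}}[\psi]=\int E_{\rho_{Y}(\cdot,\eta)}[b(\cdot,\eta)]_{\mid\eta=\tilde\eta}\,\rho_{\eta}(d\tilde\eta)$ proves the claim; the argument is essentially that of \cite[Lemma A.2.3]{duncan}.

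\emph{Main obstacle.} The only delicate input is the polynomial-in-$\eta$ control of $G$ and of its $y$- and $\eta$-derivatives used in Step 1: differentiating the $\eta$-family of Poisson equations forces one to track how the spectral gap $\lambda(\eta)$ of $\mathcal{L}_{0}(\eta)$ and the associated elliptic constants degenerate as $\abs{\eta}\to\infty$, and one needs this degeneration to be at most polynomial. This is precisely \cite[Lemma A.2.1, Proposition A.2.2]{duncan}, so here it may be quoted; everything else reduces to a routine Itô/BDG computation and the classical ergodic theorem for the Ornstein--Uhlenbeck process.
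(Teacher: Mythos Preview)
Your argument is correct and follows essentially the same route the paper sketches: solve the $\eta$-parametrised Poisson equation $\mathcal{L}_{0}G=\bar b$ via \cite[Lemma A.2.1, Proposition A.2.2]{duncan}, apply It\^o's formula to $G(Y^{\epsilon},\eta^{\epsilon})$ so that the growth bounds force the boundary, $\mathcal{L}_{\eta}G$, and martingale terms to vanish, and then invoke the ergodic theorem for the stationary Ornstein--Uhlenbeck process on the remaining $\psi(\eta^{\epsilon})$-integral. The paper only outlines this (deferring the details to \cite[Lemma A.2.3]{duncan}), so your write-up is already more explicit than the original.
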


\begin{remark}
In particular this applies for $b(\cdot,\tilde{\eta}):=\mathcal{L}_{\eta}\chi(\cdot,\tilde{\eta})$, where $\mathcal{L}_{\eta}$ and $\chi$ are defined by \eqref{OUgen} and  \eqref{eq:eta}. This is due to the fact that the derivatives in $\eta$ of the solution $\chi$ also satisfy a growth condition \eqref{bGrowth}. This was proven in \cite[Proposition A.2.2]{duncan} as $F$ satisfies $\abs{(\nabla_{\eta})^{m}F(y,\eta)}\lesssim 1+\abs{\eta}^{q_{m}}$ for some $q_{m}\geqslant 0$.
\end{remark}

\begin{subsection}{Determining the limit rough path}
Similarly as in Section \ref{4.1}, we will represent $X^\epsilon_t$ via the solution of the Poisson equation \eqref{eq:eta}. More precisely, from   \eqref{eq:driftF} we obtain
\begin{align}\label{eq:decomp}
X^{\epsilon,i}_{t}-X^{i}_{0}&=\epsilon({\chi^{i}(Y_{0}^{\epsilon},\eta_{0}^{\epsilon})-\chi^{i}(Y_{t}^{\epsilon},\eta_{t}^{\epsilon})})+\sqrt{\epsilon}\int_{0}^{t}(\nabla_{\eta}\chi^{i}(Y^{\epsilon}_{r},\eta^{\epsilon}_{r}))^{T}\sqrt{2\Gamma\Pi}dW_{r}\nonumber\\
&\quad+\int_{0}^{t}(e_{i}+\nabla_{y}\chi^{i}(Y^{\epsilon}_{r},\eta^{\epsilon}_{r}))^{T}\sqrt{2\Sigma(Y^{\epsilon}_{r},\eta^{\epsilon}_{r})}dB_{r}\\
&\quad+\int_{0}^{t}(\mathcal{L}_{\eta}\chi)^{i} (Y^{\epsilon}_{s},\eta^{\epsilon}_{s})ds\nonumber
\end{align} for $i=1,2$.
We utilize the decomposition \eqref{eq:decomp} to represent 
 the iterated Itô-integrals as follows for $i,j\in\{1,2\}$,
\begin{align}
\mathbb{X}^{\epsilon}_{0,t}(i,j)
&=\int_{0}^{t}(X^{\epsilon,i}_{s}-X^{i}_{0}) dX^{\epsilon,j}_{s}\nonumber\\&=\int_{0}^{t}({\chi^{i}(Y_{0}^{\epsilon},\eta_{0}^{\epsilon})-\chi^{i}(Y_{s}^{\epsilon},\eta_{s}^{\epsilon})}) F^{j}(Y^{\epsilon}_{s},\eta^{\epsilon}_{s})ds\label{eq:1}\\
&\quad +\sum_{l=1,2}\int_{0}^{t}\epsilon({\chi^{i}(Y_{0}^{\epsilon},\eta_{0}^{\epsilon})-\chi^{i}(Y_{s}^{\epsilon},\eta_{s}^{\epsilon})})\sqrt{2\Sigma(Y^{\epsilon}_{s},\eta^{\epsilon}_{s})}(j,l) dB_{s}^{l}\label{eq:2}\\
&\quad+\sum_{l=1,2}\int_{0}^{t}\sqrt{\epsilon}\paren[\bigg]{\int_{0}^{s}\nabla_{\eta}\chi^{i}(Y^{\epsilon}_{r},\eta^{\epsilon}_{r})\cdot\sqrt{2\Gamma\Pi}dW_{r}}\sqrt{2\Sigma(Y^{\epsilon}_{s},\eta^{\epsilon}_{s})}(j,l) dB_{s}^{l}\label{eq:3}\\
&\quad+\int_{0}^{t}\epsilon^{-1/2}\paren[\bigg]{\int_{0}^{s}{\nabla_{\eta}\chi^{i}}(Y^{\epsilon}_{r},\eta^{\epsilon}_{r})\cdot\sqrt{2\Gamma\Pi}dW_{r}} F^{j}(Y^{\epsilon}_{s},\eta^{\epsilon}_{s})ds\label{eq:4}\\
&\quad \squeeze[0.0001]{+\sum_{l=1,2}\int_{0}^{t}\!\!\paren[\bigg]{\int_{0}^{s}\!\!\!(e_{i}+\nabla_{y}\chi^{i}(Y^{\epsilon}_{r},\eta^{\epsilon}_{r}))\cdot\sqrt{2\Sigma(Y^{\epsilon}_{r},\eta^{\epsilon}_{r})}dB_{r}+\int_{0}^{s}\!\!\!(\mathcal{L}_{\eta}\chi)^{i}(Y^{\epsilon}_{r},\eta^{\epsilon}_{r})dr}\sqrt{2\Sigma(Y^{\epsilon}_{s},\eta^{\epsilon}_{s})}(j,l)dB_{s}^{l}\label{eq:5}}\\
&\quad  \squeeze[0.0001]{+\int_{0}^{t}\epsilon^{-1}\paren[\bigg]{\int_{0}^{s}(e_{i}+\nabla_{y}\chi^{i}(Y^{\epsilon}_{r},\eta^{\epsilon}_{r}))\cdot\sqrt{2\Sigma(Y^{\epsilon}_{r},\eta^{\epsilon}_{r})}dB_{r}+\int_{0}^{s}(\mathcal{L}_{\eta}\chi)^{i}(Y^{\epsilon}_{r},\eta^{\epsilon}_{r})dr} F^{j}(Y^{\epsilon}_{s},\eta^{\epsilon}_{s})ds}.\label{eq:6}
\end{align}
We immediately see that  terms \eqref{eq:2} and \eqref{eq:3} will converge in $L^{2}(\p)$ to zero by Burkholder-Davis-Gundy inequality and the growth conditions on $\nabla_{\eta}^{k}\chi$ for $k=0,1$ by \cite[Proposition A.2.2]{duncan}. Furthermore, the fourth term \eqref{eq:4} can be written by integration by parts as
\begin{align*}
\MoveEqLeft
\int_{0}^{t}\epsilon^{-1/2}\paren[\bigg]{\int_{0}^{s}\nabla_{\eta}\chi^{i}(Y^{\epsilon}_{r},\eta^{\epsilon}_{r})\cdot\sqrt{2\Gamma\Pi}dW_{r}}F^{j}(Y^{\epsilon}_{s},\eta^{\epsilon}_{s})ds
\\&=\int_{0}^{t}\paren[\bigg]{\int_{0}^{s}\epsilon^{1/2}\nabla_{\eta}\chi^{i}(Y^{\epsilon}_{r},\eta^{\epsilon}_{r})\cdot\sqrt{2\Gamma\Pi}dW_{r}}\epsilon^{-1}F^{j}(Y^{\epsilon}_{s},\eta^{\epsilon}_{s})ds\\
&=\int_{0}^{t}\paren[\bigg]{\int_{0}^{s}\epsilon^{-1}F^{j}(Y^{\epsilon}_{s},\eta^{\epsilon}_{s})ds}d\paren[\bigg]{\int_{0}^{\cdot}\epsilon^{1/2}\nabla_{\eta}\chi^{i}(Y^{\epsilon}_{r},\eta^{\epsilon}_{r})\cdot\sqrt{2\Gamma\Pi}dW_{r}}_{s}\\
&\quad+\paren[\bigg]{\int_{0}^{t}\epsilon^{-1}F^{j}(Y^{\epsilon}_{s},\eta^{\epsilon}_{s})ds}\paren[\bigg]{\int_{0}^{t}\epsilon^{1/2}\nabla_{\eta}\chi^{i}(Y^{\epsilon}_{s},\eta^{\epsilon}_{s})\cdot\sqrt{2\Gamma\Pi}dW_{s}}.
\end{align*} 
and utilizing \cref{prop:UCV}, we want to deduce that the term \eqref{eq:4} converges to zero in probability. We have that
\begin{equation*}
\paren[\bigg]{\epsilon^{-1}\int_{0}^{\cdot} F^{j}(Y^{\epsilon}_{s},\eta^{\epsilon}_{s})ds,\int_{0}^{\cdot}\epsilon^{1/2}\nabla_{\eta}\chi^{i}\sqrt{2\Gamma\Pi}((Y^{\epsilon}_{s},\eta^{\epsilon}_{s})dW_{s}}\Rightarrow \paren[\big]{(\tilde{Z}^{j}_{t}+tL^{j})_{t},0}
\end{equation*}
jointly in distribution by the decomposition \eqref{eq:driftF} and the arguments from \cite[Theorem 5.2.2.]{duncan}. Here $\tilde{Z}$ is the limiting Brownian motion with variance $t\int\int(\nabla_{y}\chi)^{T}2\Sigma\nabla_{y}\chi d\rho_{Y}d\rho_{\eta}$. Furthermore, the UCV condition holds for the martingale, as $\nabla_{\eta}\chi\in L^{2}(\rho_{Y}(\eta)\rho_{\eta})$ by the growth condition proven in \cite[Proposition A.2.2]{duncan}, with which the expected quadratic variation can be bounded. Together this then yields that \eqref{eq:4} converges to zero in probability.  

\noindent Applying  \cref{prop:mainprop}  for $b=\chi^{i} F^{j}$ and using that $\int F(y,\eta)d\rho_{Y}(dy,\eta)=0$ by the definition of $F$ and $\rho_{Y}$, we obtain that the first term \eqref{eq:1} will converge in probability to 
\begin{equation*}
t\int\int\chi^{i} F^{j}\rho_{Y}(dy,\eta)\rho_{\eta}(d\eta)=:ta_{F}(i,j).
\end{equation*} 
In order to deal with the remaining   terms \eqref{eq:5} and \eqref{eq:6}, we rewrite them by the integration by parts, respectively Itô's formula. For \eqref{eq:6} we obtain by integration by parts
\begin{align*}
\MoveEqLeft
\int_{0}^{t}\epsilon^{-1}\paren[\bigg]{\int_{0}^{s}(e_{i}+\nabla_{y}\chi^{i}(Y^{\epsilon}_{r},\eta^{\epsilon}_{r}))\cdot\sqrt{2\Sigma(Y^{\epsilon}_{r},\eta^{\epsilon}_{r})}dB_{r}+\int_{0}^{s}(\mathcal{L}_{\eta}\chi)^{i}(Y^{\epsilon}_{r},\eta^{\epsilon}_{r})dr}F^{j}(Y^{\epsilon}_{s},\eta^{\epsilon}_{s})ds\\
&=(\int_{0}^{t}\epsilon^{-1}F^{j}(Y^{\epsilon}_{s},\eta^{\epsilon}_{s}))ds)\paren[\bigg]{\int_{0}^{t}(e_{i}+\nabla_{y}\chi^{i}(Y^{\epsilon}_{s},\eta^{\epsilon}_{s}))\cdot\sqrt{2\Sigma(Y^{\epsilon}_{s},\eta^{\epsilon}_{s})}dB_{s}+\int_{0}^{t}(\mathcal{L}_{\eta}\chi)^{i}(Y^{\epsilon}_{s},\eta^{\epsilon}_{s})ds}\\
&\quad-\int_{0}^{t}(\int_{0}^{s}\epsilon^{-1}F^{j}(Y^{\epsilon}_{r},\eta^{\epsilon}_{r})dr)d\paren[\bigg]{\int_{0}^{\cdot}(e_{i}+\nabla_{y}\chi^{i}(Y^{\epsilon}_{r},\eta^{\epsilon}_{r}))\cdot\sqrt{2\Sigma(Y^{\epsilon}_{r},\eta^{\epsilon}_{r})}dB_{r})+\int_{0}^{\cdot}(\mathcal{L}_{\eta}\chi)^{i}(Y^{\epsilon}_{r},\eta^{\epsilon}_{r})dr}_{s}.
\end{align*}
According to Itô's formula  for \eqref{eq:5} we have
\begin{align*}
\MoveEqLeft
\int_{0}^{t}\paren[\bigg]{\int_{0}^{s}(e_{i}+\nabla_{y}\chi^{i}(Y^{\epsilon}_{r},\eta^{\epsilon}_{r}))\cdot\sqrt{2\Sigma(Y^{\epsilon}_{r},\eta^{\epsilon}_{r})}dB_{r}+\int_{0}^{s}(\mathcal{L}_{\eta}\chi)^{i}(Y^{\epsilon}_{r},\eta^{\epsilon}_{r})dr}d\left(\int_{0}^{\cdot} \sqrt{2\Sigma(Y^{\epsilon}_{r},\eta^{\epsilon}_{r})}dB_{r} \right)^{j}_{s}\\
&=\paren[\bigg]{\int_{0}^{t}(e_{i}+\nabla_{y}\chi^{i}(Y^{\epsilon}_{s},\eta^{\epsilon}_{s}))\cdot\sqrt{2\Sigma(Y^{\epsilon}_{s},\eta^{\epsilon}_{s})}dB_{s}+\int_{0}^{t}(\mathcal{L}_{\eta}\chi)^{i}(Y^{\epsilon}_{s},\eta^{\epsilon}_{s})ds}\left(\int_{0}^{t} \sqrt{2\Sigma(Y^{\epsilon}_{s},\eta^{\epsilon}_{s})}dB_{s}\right)^{j}\\
&\quad-\int_{0}^{t}\left(\int_{0}^{s}\sqrt{2\Sigma(Y^{\epsilon}_{r},\eta^{\epsilon}_{r})}dB_{r}\right)^{j}d\paren[\bigg]{\int_{0}^{\cdot}(e_{i}+\nabla_{y}\chi^{i}(Y^{\epsilon}_{r},\eta^{\epsilon}_{r}))\cdot\sqrt{2\Sigma(Y^{\epsilon}_{r},\eta^{\epsilon}_{r})}dB_{r}+\int_{0}^{\cdot}(\mathcal{L}_{\eta}\chi)^{i}(Y^{\epsilon}_{r},\eta^{\epsilon}_{r})dr}_{s}\\
&\quad-\int_{0}^{t}(e_{i}+\nabla_{y}\chi^{i}(Y^{\epsilon}_{s},\eta^{\epsilon}_{s}))\cdot2\Sigma(Y^{\epsilon}_{s},\eta^{\epsilon}_{s}) e_{j}ds.
\end{align*} 
By adding the previous two terms up, we obtain that overall
\begin{align}
\MoveEqLeft
\int_{0}^{t}X^{\epsilon,i}_{s}dX^{\epsilon,j}_{s}\nonumber\\
&= a_{F}^{\epsilon}(i,j)+X^{\epsilon,j}_{t}\paren[\bigg]{\int_{0}^{t}(e_{i}+\nabla_{y}\chi^{i}(Y^{\epsilon}_{r},\eta^{\epsilon}_{r}))\cdot\sqrt{2\Sigma(Y^{\epsilon}_{r},\eta^{\epsilon}_{r})}dB_{r}+\int_{0}^{t}(\mathcal{L}_{\eta}\chi)^{i}(Y^{\epsilon}_{r},\eta^{\epsilon}_{r})dr}\nonumber\\
&\quad-\int_{0}^{t}X^{\epsilon,j}_{s}d\paren[\bigg]{\int_{0}^{\cdot}(e_{i}+\nabla_{y}\chi^{i}(Y^{\epsilon}_{r},\eta^{\epsilon}_{r}))\cdot\sqrt{2\Sigma(Y^{\epsilon}_{r},\eta^{\epsilon}_{r})}dB_{r}+\int_{0}^{\cdot}(\mathcal{L}_{\eta}\chi)^{i}(Y^{\epsilon}_{r},\eta^{\epsilon}_{r})dr}_{s}\nonumber\\
&\quad-\int_{0}^{t} (e_{i}+\nabla_{y}\chi^{i}(Y^{\epsilon}_{r},\eta^{\epsilon}_{r}))\cdot 2\Sigma(Y^{\epsilon}_{r},\eta^{\epsilon}_{r}) e_{j}dr\label{eq:boldX},
\end{align}
where $a_{F}^{\epsilon}(i,j)$ denotes the sum of the terms \eqref{eq:1},\eqref{eq:2},\eqref{eq:3} and \eqref{eq:4}, that will converge in probability to 
 \begin{equation*}
 ta_{F}(i,j):=t\int\int\chi^{i} F^{j}\rho_{Y}(dy,\eta)\rho_{\eta}(d\eta).
 \end{equation*}
For the stochastic integral and the product term in \eqref{eq:boldX}, we again apply \cref{prop:UCV} with  
\begin{align*}
\left(X^{\epsilon,j},\int_{0}^{\cdot}(e_{i}+\nabla_{y}\chi^{i}(Y^{\epsilon}_{r},\eta^{\epsilon}_{r}))\cdot\sqrt{2\Sigma(Y^{\epsilon}_{r},\eta^{\epsilon}_{r})}dB_{r}+\int_{0}^{\cdot}(\mathcal{L}_{\eta}\chi)^{i}(Y^{\epsilon}_{r},\eta^{\epsilon}_{r})dr \right)\Rightarrow (X^{j},X^{i})
\end{align*} 
jointly in distribution (by \cite[Theorem 5.2.2]{duncan}) using that the  UCV condition for the semi-martingales
 $$\paren[\bigg]{\int_{0}^{\cdot}(e_{i}+\nabla_{y}\chi^{i}(Y^{\epsilon}_{r},\eta^{\epsilon}_{r}))\cdot\sqrt{2\Sigma(Y^{\epsilon}_{r},\eta^{\epsilon}_{r})}dB_{r}+\int_{0}^{\cdot}(\mathcal{L}_{\eta}\chi)^{i}(Y^{\epsilon}_{r},\eta^{\epsilon}_{r})dr}_{\epsilon}$$
 holds (by boundedness of $\Sigma$ and the bounds (A.7) and (A.8) of \cite[Proposition A.2.2]{duncan}, which are bounds for the expected quadratic variation). Let us define
\begin{equation*}
c^{i,j}:=\int\int (e_{i}+\nabla_{y}\chi^{i})\cdot 2\Sigma e_{j}d\rho_{Y}(\cdot,\eta)d\rho_{\eta}.
\end{equation*}
Then according to \cref{prop:mainprop}, the last term in \eqref{eq:boldX} (the quadratic variation term) converges in probability to $tc^{i,j}$.
Hence, together, utilizing Slutzky's Lemma, we obtain the distributional convergence
\begin{align*}
\MoveEqLeft
\int_{0}^{t}X^{\epsilon,i}_{s}dX^{\epsilon,j}_{s}\\
&\Rightarrow t a_{F}(i,j)+X^{j}_{t}X^{i}_{t}-\int_{0}^{t}X^{j}_{s}dX^{i}_{s}-\frac{t}{2}c^{i,j}\\
&=t(a_{F}(i,j)-c^{i,j}+\langle X^{i},X^{j}\rangle_{1})+\int_{0}^{t}X^{i}_{s}dX^{j}_{s}.
\end{align*}

\noindent Let us summarize our findings about the convergence of the iterated Itô integrals, as well as the iterated Stratonovich integrals, in the next proposition and corollary.
\begin{proposition}\label{prop:limit3}
Let $(X^{\epsilon},Y^{\epsilon},\eta^{\epsilon})$ be the solution of the system \eqref{eq:system3} for $(Y^{\epsilon}_{0},\eta^{\epsilon}_{0})\sim\rho_{Y}(dy,\eta)\rho_{\eta}(d\eta)$. Moreover, let 
\begin{align*}
X_{t}=\sqrt{2D}Z_{t}+tL
\end{align*}
for a standard two-dimensional Brownian motion $Z$,  and
\begin{align*}
D=\int\int (I + \nabla_y \chi(y,\eta))^{T}{\Sigma(y,\eta)}(I + \nabla_y \chi(y,\eta)) \rho_{Y}(dy, \eta)\rho_{\eta}(d\eta)
\end{align*} and 
\begin{align*}
L=\int\int\mathcal{L}_{\eta}\chi(y,\eta)\rho_{Y}(dy, \eta)\rho_{\eta}(d\eta),
\end{align*}
 where for each $\eta\in\R^{K}$, $\chi(\cdot,\eta)$ is the solution of  $\mathcal{L}_{0}\chi(\cdot,\eta)=-F(\cdot,\eta)$.

\noindent Then for all $s,t\in\Delta_{T}$, weak convergence in $\R^{2\times 2}$ of the iterated Itô-integrals $(\mathbb{X}^{\epsilon}_{s,t})$ holds true, where for $i,j\in\{1,2\}$
\begin{align}
\mathbb{X}^{\epsilon}_{s,t}(i,j):=\int_{s}^{t}X^{\epsilon,i}_{s,r}dX^{\epsilon,j}_{r}
&\Rightarrow \mathbb{X}_{s,t}(i,j)+t\paren[\bigg]{\langle X^{i},X^{j}\rangle_{1}+\langle\chi^{i},\mathcal{L}_{0}\chi^{j}\rangle_{\rho_{Y}(\cdot,\eta)\rho_{\eta}}\nonumber\\&\quad-\int\int (e_{i}+\nabla\chi^{i}(y,\eta)\cdot 2\Sigma(y,\eta) e_{j}\rho_{Y}(dy,\eta)\rho_{\eta}(d\eta)}
\end{align} 
as $\epsilon\to 0$, where $\mathbb{X}_{s,t}(i,j):=\int_{s}^{t}X_{s,r}^{i}dX^{j}_{r}$.
\end{proposition}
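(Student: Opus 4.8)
The plan is to read off the limit of $\mathbb{X}^{\epsilon}_{0,t}(i,j)=\int_{0}^{t}(X^{\epsilon,i}_{s}-X^{i}_{0})\,dX^{\epsilon,j}_{s}$ from the decomposition \eqref{eq:decomp} of $X^{\epsilon,i}-X^{i}_{0}$ through the cell-problem solution $\chi(\cdot,\eta)$ of $\mathcal{L}_{0}\chi(\cdot,\eta)=-F(\cdot,\eta)$, which trades the singular drift $\epsilon^{-1}F$ for a boundary term of order $\epsilon$, an $O(\sqrt{\epsilon})$ martingale in $W$, an order-one martingale against $B$ and $W$, and the slowly varying drift $\int_{0}^{\cdot}(\mathcal{L}_{\eta}\chi)(Y^{\epsilon}_{r},\eta^{\epsilon}_{r})\,dr$. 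Substituting this into the iterated integral and expanding against $dX^{\epsilon,j}_{s}=\epsilon^{-1}F^{j}\,ds+\sqrt{2\Sigma}\,dB_{s}$ produces the six terms \eqref{eq:1}--\eqref{eq:6}, which I would treat one by one and then reassemble.

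First I would dispose of the negligible contributions. Terms \eqref{eq:2} and \eqref{eq:3} vanish in $L^{2}(\p)$ by Burkholder--Davis--Gundy together with the polynomial $\eta$-growth bounds on $\chi,\nabla_{\eta}\chi$ from \cite[Proposition A.2.2]{duncan}, the bound \eqref{eq:Sigma} on $\Sigma$, stationarity of $(Y^{\epsilon},\eta^{\epsilon})$, and the explicit prefactors $\epsilon,\sqrt{\epsilon}$. For \eqref{eq:4}, an integration by parts rewrites it through the product and covariation of $\epsilon^{-1}\int_{0}^{\cdot}F^{j}\,ds$ (which converges to the process $t\mapsto\tilde{Z}^{j}_{t}+tL^{j}$ by \cite[Theorem 5.2.2]{duncan}) with $\int_{0}^{\cdot}\sqrt{\epsilon}\,\nabla_{\eta}\chi^{i}\cdot\sqrt{2\Gamma\Pi}\,dW$, whose expected quadratic variation is of order $\epsilon$; after checking the UCV condition for the latter martingale via the $L^{2}(\rho_{Y}(\cdot,\eta)\rho_{\eta})$-bound on $\nabla_{\eta}\chi$ and applying \cref{prop:UCV}, term \eqref{eq:4} vanishes in probability. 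The first term \eqref{eq:1} converges in probability to $t\,a_{F}(i,j)$ with $a_{F}(i,j)=\langle\chi^{i},(\mathcal{L}_{0}\chi)^{j}\rangle_{\rho_{Y}(\cdot,\eta)\rho_{\eta}}$ by the ergodic theorem \cref{prop:mainprop} applied to $b=\chi^{i}F^{j}$, using $\int F(y,\eta)\rho_{Y}(dy,\eta)=0$.

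It then remains to combine \eqref{eq:5} and \eqref{eq:6}: rewriting \eqref{eq:6} by integration by parts (no It\^o correction, since $\int_{0}^{\cdot}\epsilon^{-1}F^{j}\,ds$ has finite variation) and \eqref{eq:5} by It\^o's formula (whose It\^o correction supplies the term $-\int_{0}^{t}(e_{i}+\nabla_{y}\chi^{i})\cdot 2\Sigma e_{j}\,dr$), their sum is exactly \eqref{eq:boldX}. In \eqref{eq:boldX} I would treat the product and the $dX^{\epsilon,j}$-integral with \cref{prop:UCV}: the semimartingale $\int_{0}^{\cdot}(e_{i}+\nabla_{y}\chi^{i})\cdot\sqrt{2\Sigma}\,dB_{r}+\int_{0}^{\cdot}(\mathcal{L}_{\eta}\chi)^{i}\,dr$ satisfies the UCV condition (boundedness of $\Sigma$ controls the martingale part, while stationarity and the $\eta$-growth of $\mathcal{L}_{\eta}\chi$, via the bounds (A.7)--(A.8) of \cite[Proposition A.2.2]{duncan}, bound $\E\int_{0}^{T}|\mathcal{L}_{\eta}\chi(Y^{\epsilon}_{s},\eta^{\epsilon}_{s})|\,ds$ uniformly in $\epsilon$) and converges jointly with $X^{\epsilon,j}$ to $(X^{i},X^{j})$ by \cite[Theorem 5.2.2]{duncan} and \eqref{eq:driftF}; the remaining quadratic-variation term in \eqref{eq:boldX} converges in probability to $t\,c^{i,j}$, $c^{i,j}=\int\int(e_{i}+\nabla_{y}\chi^{i})\cdot 2\Sigma e_{j}\,\rho_{Y}(dy,\eta)\rho_{\eta}(d\eta)$, by \cref{prop:mainprop}. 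Slutsky's lemma then assembles these into $\int_{0}^{t}X^{i}_{s}\,dX^{j}_{s}+t(a_{F}(i,j)-c^{i,j}+\langle X^{i},X^{j}\rangle_{1})$, which is the asserted limit; the joint convergence of the four entries $(i,j)$ follows from \eqref{eq:boldX} together with the joint convergence of the components of $X^{\epsilon}$, and the extension to an arbitrary base point $s>0$ is verbatim.

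The main obstacle is keeping the UCV machinery and the joint distributional convergence coherent across all the pieces: every martingale part has to be organized against the common filtration generated by $B$ and $W$ so that \cref{prop:UCV} genuinely applies, and the finite-variation contribution $\int_{0}^{\cdot}(\mathcal{L}_{\eta}\chi)(Y^{\epsilon},\eta^{\epsilon})\,dr$ --- which grows polynomially in $\eta$ --- is only controlled because stationarity turns its expected one-variation into a finite constant. The most delicate single term is the $\epsilon^{-1/2}$-singular \eqref{eq:4}, whose cancellation is invisible until after the integration by parts.
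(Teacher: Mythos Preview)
Your proposal is correct and follows essentially the same route as the paper's own argument: the decomposition via \eqref{eq:decomp} into the six terms \eqref{eq:1}--\eqref{eq:6}, disposal of \eqref{eq:2}--\eqref{eq:4} by BDG/growth bounds and integration by parts plus \cref{prop:UCV}, the ergodic limit \cref{prop:mainprop} for \eqref{eq:1} and for the quadratic-variation term, and the recombination of \eqref{eq:5}+\eqref{eq:6} into \eqref{eq:boldX} handled by UCV and Slutsky. Your identification of the delicate points (the UCV bookkeeping across the common filtration, the polynomial $\eta$-growth of $\mathcal{L}_{\eta}\chi$ controlled only through stationarity, and the hidden cancellation in \eqref{eq:4}) matches the paper's implicit concerns.
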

\begin{corollary}\label{cor:limit3}
Let $(X^{\epsilon},Y^{\epsilon},\eta^{\epsilon})$ be {as in \cref{prop:limit3}}. Then for all $s,t\in\Delta_{T}$, weak convergence in $\R^{2\times 2}$ of the iterated Stratonovich-integrals $(\tilde{\mathbb{X}}^{\epsilon}_{s,t})$ holds true, where for $i,j\in\{1,2\}$
\begin{align}
\tilde{\mathbb{X}}^{\epsilon}_{s,t}(i,j):=\int_{s}^{t}X^{\epsilon,i}_{s,r}\circ dX^{\epsilon,j}_{r}\Rightarrow\tilde{\mathbb{X}}_{s,t}(i,j) + t\tilde{A}(i,j)
\end{align}
weakly for $\epsilon\to 0$, where 
\begin{align*}
X_{t}=\sqrt{2D}Z_{t}+tL
\end{align*} 
for a standard two-dimensional Brownian motion $Z$ and $\tilde{\mathbb{X}}_{s,t}(i,j):=\int_{s}^{t}X_{s,r}^{i}\circ dX^{j}_{r}$ and $D,L$ are defined as in \cref{prop:limit3}. The area correction is given by
\begin{align*}
\tilde{A}(i,j)&=\int\int e_{i}\cdot \Sigma\nabla_{y}\chi^{j}(y,\eta)\rho_{y}(dy,\eta)\rho_{\eta}(d\eta)-\int\int\nabla_{y}\chi^{i}(y,\eta)\cdot\Sigma e_{j}\rho_{y}(dy,\eta)\rho_{\eta}(d\eta)\\&=\int\int \chi^{i}(y,\eta)F^{j}(y,\eta)\rho_{y}(dy,\eta)\rho_{\eta}(d\eta) - \int\int F^{i}(y,\eta)\chi^{j}(y,\eta)\rho_{y}(dy,\eta)\rho_{\eta}(d\eta)
\end{align*}
\end{corollary}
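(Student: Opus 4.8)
The plan is to follow the same route as the proof of \cref{cor:limit1}: convert the iterated Stratonovich integral into an Itô one plus half of a quadratic covariation, treat the covariation term with the ergodic theorem \cref{prop:mainprop}, and then combine with the Itô‑level convergence of \cref{prop:limit3} by Slutsky's lemma.

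First, for fixed $(s,t)\in\Delta_{T}$ and $i,j\in\{1,2\}$ I would write
\[
\tilde{\mathbb{X}}^{\epsilon}_{s,t}(i,j)=\mathbb{X}^{\epsilon}_{s,t}(i,j)+\tfrac12\paren[\big]{\langle X^{\epsilon,i},X^{\epsilon,j}\rangle_{t}-\langle X^{\epsilon,i},X^{\epsilon,j}\rangle_{s}}=\mathbb{X}^{\epsilon}_{s,t}(i,j)+\int_{s}^{t}e_{i}\cdot\Sigma(Y^{\epsilon}_{r},\eta^{\epsilon}_{r})e_{j}\,dr,
\]
using the dynamics \eqref{eq:system3} of $X^{\epsilon}$. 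The map $(y,\eta)\mapsto e_{i}\cdot\Sigma(y,\eta)e_{j}$ is bounded by \eqref{eq:Sigma}, smooth, and has all $\eta$‑derivatives of polynomial growth (as $h$ is linear in $\eta$ and $\Sigma=(I+\nabla_{y}h\otimes\nabla_{y}h)^{-1}$), hence satisfies the growth hypothesis \eqref{bGrowth}. \cref{prop:mainprop} then gives $\int_{s}^{t}e_{i}\cdot\Sigma e_{j}(Y^{\epsilon}_{r},\eta^{\epsilon}_{r})\,dr\to(t-s)\int\int e_{i}\cdot\Sigma e_{j}\,\rho_{Y}(dy,\eta)\rho_{\eta}(d\eta)$ in probability, jointly in $i,j$. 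Since this limit is deterministic and $(\mathbb{X}^{\epsilon}_{s,t}(i,j))_{i,j}$ converges weakly in $\R^{2\times2}$ by \cref{prop:limit3}, Slutsky's lemma yields weak convergence of $(\tilde{\mathbb{X}}^{\epsilon}_{s,t}(i,j))_{i,j}$ in $\R^{2\times2}$, with limit $\mathbb{X}_{s,t}(i,j)+(t-s)\big(A(i,j)+\int\int e_{i}\cdot\Sigma e_{j}\,\rho_{Y}\rho_{\eta}\big)$, where $A(i,j):=\langle X^{i},X^{j}\rangle_{1}+\langle\chi^{i},\mathcal{L}_{0}\chi^{j}\rangle_{\rho_{Y}(\cdot,\eta)\rho_{\eta}}-\int\int(e_{i}+\nabla_{y}\chi^{i})\cdot 2\Sigma e_{j}\,\rho_{Y}\rho_{\eta}$ is the Itô area correction of \cref{prop:limit3} (throughout, $\int\int(\cdot)\,\rho_{Y}\rho_{\eta}$ abbreviates $\int\int(\cdot)\,\rho_{Y}(dy,\eta)\rho_{\eta}(d\eta)$).

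It remains to recast this limit as $\tilde{\mathbb{X}}_{s,t}(i,j)+(t-s)\tilde A(i,j)$. Because $X_{t}=\sqrt{2D}Z_{t}+tL$, we have $\tilde{\mathbb{X}}_{s,t}(i,j)=\mathbb{X}_{s,t}(i,j)+(t-s)D(i,j)$ and $\langle X^{i},X^{j}\rangle_{1}=2D(i,j)$, so
\[
\tilde A(i,j)=D(i,j)+\langle\chi^{i},\mathcal{L}_{0}\chi^{j}\rangle_{\rho_{Y}(\cdot,\eta)\rho_{\eta}}-2\int\int(e_{i}+\nabla_{y}\chi^{i})\cdot\Sigma e_{j}\,\rho_{Y}\rho_{\eta}+\int\int e_{i}\cdot\Sigma e_{j}\,\rho_{Y}\rho_{\eta}.
\]
The decisive point, which is where this regime differs from \cref{cor:limit1}, is that for each fixed $\eta$ the operator $\mathcal{L}_{0}(\eta)$ is symmetric on $L^{2}(\rho_{Y}(\cdot,\eta))$: the measure $\rho_{Y}(dy,\eta)\propto\sqrt{\abs{g}(y,\eta)}\,dy$ is exactly the Riemannian volume making the Laplace--Beltrami operator $\mathcal{L}_{0}(\eta)$ self-adjoint, so integrating by parts on the compact torus gives $\langle\chi^{i},-\mathcal{L}_{0}\chi^{j}\rangle_{\rho_{Y}(\cdot,\eta)}=\int\nabla_{y}\chi^{i}\cdot\Sigma\nabla_{y}\chi^{j}\,\rho_{Y}(dy,\eta)$; moreover, here $\chi$ solves the quenched equation \eqref{eq:eta}, so $D$ carries no $\nabla_{\eta}\chi$ term. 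Substituting this and expanding $D(i,j)=\int\int(e_{i}+\nabla_{y}\chi^{i})\cdot\Sigma(e_{j}+\nabla_{y}\chi^{j})\,\rho_{Y}\rho_{\eta}$, the $e_{i}\cdot\Sigma e_{j}$ and $\nabla_{y}\chi^{i}\cdot\Sigma\nabla_{y}\chi^{j}$ contributions cancel and one is left with $\tilde A(i,j)=\int\int e_{i}\cdot\Sigma\nabla_{y}\chi^{j}\,\rho_{Y}\rho_{\eta}-\int\int\nabla_{y}\chi^{i}\cdot\Sigma e_{j}\,\rho_{Y}\rho_{\eta}$, the first claimed form. For the second form, integrating by parts against $\sqrt{\abs{g}(\cdot,\eta)}\,dy$ and using $\sqrt{\abs{g}}\,F=\nabla_{y}\cdot(\sqrt{\abs{g}}\,\Sigma)$ from \eqref{eq:defF} turns $\int\nabla_{y}\chi^{i}\cdot\Sigma e_{j}\,\rho_{Y}(dy,\eta)$ into $-\int\chi^{i}F^{j}\,\rho_{Y}(dy,\eta)$ and $\int e_{i}\cdot\Sigma\nabla_{y}\chi^{j}\,\rho_{Y}(dy,\eta)$ into $-\int F^{i}\chi^{j}\,\rho_{Y}(dy,\eta)$, giving the stated expression. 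The base point $s>0$ is handled verbatim.

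I expect the only genuinely new input beyond \cref{prop:limit3} and \cref{prop:mainprop} to be the self-adjointness of $\mathcal{L}_{0}(\eta)$ on $L^{2}(\rho_{Y}(\cdot,\eta))$ together with the two integration-by-parts identities, and the main nuisance to be keeping track of the several $\Sigma$-bilinear forms in the cancellation step rather than anything conceptually deep. I would also remark that self-adjointness forces $\int\chi^{i}F^{j}\,\rho_{Y}=\int F^{i}\chi^{j}\,\rho_{Y}$, whence in fact $\tilde A\equiv0$ in this regime and $(X^{\epsilon},\tilde{\mathbb{X}}^{\epsilon})$ converges to the canonical Stratonovich lift of $X$, in agreement with the reversibility of $Y^{\epsilon}$ noted in the introduction.
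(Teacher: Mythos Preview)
Your proof is correct and follows essentially the same route as the paper's: the Itô--Stratonovich relation, \cref{prop:mainprop} for the quadratic-variation term, Slutsky together with \cref{prop:limit3}, and then the same integration-by-parts identity $\langle\chi^{i},-\mathcal{L}_{0}\chi^{j}\rangle_{\rho_{Y}\rho_{\eta}}=\int\int\nabla_{y}\chi^{i}\cdot\Sigma\nabla_{y}\chi^{j}\,\rho_{Y}\rho_{\eta}$ to simplify the area correction, followed by the $\sqrt{|g|}F=\nabla_{y}\cdot(\sqrt{|g|}\Sigma)$ integration by parts for the second expression. Your closing remark that self-adjointness of $\mathcal{L}_{0}(\eta)$ forces $\tilde A\equiv 0$ is exactly what the paper records separately as \cref{cor:area-vanish}.
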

\begin{proof}
The corollary follows from \cref{prop:limit3} and analogue arguments as in \cref{cor:limit1}. We have that
\begin{align*}
{\frac{1}{2}\langle X^{\epsilon,i},X^{\epsilon,j}\rangle_{t}=\int_{0}^{t}e_{i}\cdot\Sigma(Y^{\epsilon}_{s},\eta^{\epsilon}_{s}) e_{j}ds\to t\int e_{i}\cdot\Sigma(y,\eta) e_{j}\rho_{Y}(dy,\eta)\rho_{\eta}(d\eta)}.
\end{align*} 
using \cref{prop:mainprop}.
Furthermore we have that
\begin{align*}
\tilde{\mathbb{X}}^{\epsilon}_{0,t}(i,j)=\mathbb{X}^{\epsilon}_{0,t}(i,j)+\frac{1}{2}\langle X^{\epsilon,i},X^{\epsilon,j}\rangle_{t}&\Rightarrow\mathbb{X}_{0,t}(i,j)+t\langle\chi^{i},(\mathcal{L}_{0}\chi)^{j}\rangle_{\rho_{y}(\cdot,\eta)\rho_{\eta}}+\langle X^{i},X^{j}\rangle_{t}\\&\quad-t\int (\nabla_{y}\chi^{i}+e_{i})\cdot 2\Sigma e_{j}d(\rho_{y}(\cdot,\eta)\rho_{\eta})+t\int e_{i}\cdot \Sigma e_{j}d(\rho_{y}(\cdot,\eta)\rho_{\eta})\\& = \mathbb{X}_{0,t}(i,j)+ \frac{1}{2}\langle X^{i},X^{j}\rangle_{t} \\&\quad +t\paren[\bigg]{\langle\chi^{i},(\mathcal{L}_{0}\chi)^{j}\rangle_{\rho_{y}(\cdot,\eta)\rho_{\eta}} + \frac{1}{2}\langle X^{i},X^{j}\rangle_{t} \\&\quad-\int \nabla_{y}\chi^{i}\cdot 2\Sigma e_{j}d(\rho_{y}(\cdot,\eta)\rho_{\eta}) - \int e_{i}\cdot \Sigma e_{j}d(\rho_{y}(\cdot,\eta)\rho_{\eta})}\\&=\tilde{\mathbb{X}}_{0,t}(i,j)+t\tilde{A}(i,j).
\end{align*} 
The area correction can be written as
\begin{align*}
\tilde{A}(i,j) &= \langle\chi^{i},\mathcal{L}_{0}\chi^{j}\rangle_{\rho_{y}(\cdot,\eta)\rho_{\eta}} + D(i,j) -\int \nabla_{y}\chi^{i}\cdot 2\Sigma e_{j}d(\rho_{y}(\cdot,\eta)\rho_{\eta}) - \int e_{i}\cdot \Sigma e_{j}d(\rho_{y}(\cdot,\eta)\rho_{\eta}) \\&=\int e_{i}\cdot \Sigma \nabla_{y}\chi^{j}d(\rho_{y}(\cdot,\eta)\rho_{\eta})-\int\nabla_{y}\chi^{i}\cdot\Sigma e_{j}d(\rho_{y}(\cdot,\eta)\rho_{\eta}),
\end{align*}
using furthermore that by the definition of $F$ and the invariant measure $\rho_{y}(dy,\eta)=Z^{-1}\sqrt{\abs{\Sigma(y,\eta)}}dy$,
\begin{align*}
\langle\chi^{i},\mathcal{L}_{0}\chi^{j}\rangle_{\rho_{y}(\cdot,\eta)\rho_{\eta}}
=-\int \nabla_{y}\chi^{i}\cdot\Sigma\nabla_{y}\chi^{j}d(\rho_{y}(\cdot,\eta)\rho_{\eta}). 
\end{align*}
By integrating $\nabla_{y}$ by parts and using once more the definition of $F$ and the invariant measure, we obtain that
\begin{align*}
\int \nabla_{y}\chi^{i}\cdot \Sigma e_{j}d(\rho_{y}(\cdot,\eta)\rho_{\eta}) = - \int_{\R^{2K}} \int_{\mathbb{T}^{2}} \chi^{i}(y,\eta)F^{j}(y,\eta)\rho_{y}(dy,\eta)\rho_{\eta}(d\eta),
\end{align*}
such that the claim for $\tilde{A}$ follows.
\end{proof}
\begin{corollary}\label{cor:area-vanish}
For $\tilde{A}$ from \cref{cor:limit3}, it follows that $\tilde{A}(i,j)=0$ for all $i,j=1,2$.
\end{corollary}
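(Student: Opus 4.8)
The plan is to observe that the statement is exactly the reversibility of the quenched fast motion. For each fixed environment $\eta\in\R^{2K}$ the generator $\mathcal{L}_{0}(\eta)$ of $Y$ is a symmetric operator on $L^{2}(\rho_{Y}(\cdot,\eta))$: this is immediate from the divergence form $\mathcal{L}_{0}f=\frac{1}{\sqrt{\abs{g}}}\nabla_{y}\cdot(\sqrt{\abs{g}}\Sigma\nabla_{y}f)$ together with the choice $\rho_{Y}(dy,\eta)=C^{-1}\sqrt{\abs{\Sigma^{-1}}(y,\eta)}\,dy=C^{-1}\sqrt{\abs{g}(y,\eta)}\,dy$. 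Indeed, for smooth $f,h:\mathbb{T}^{2}\to\R$ an integration by parts on the compact torus gives
\[
\langle\mathcal{L}_{0}(\eta)f,h\rangle_{\rho_{Y}(\cdot,\eta)}=-C^{-1}\int_{\mathbb{T}^{2}}\nabla_{y}f\cdot\Sigma\nabla_{y}h\,\sqrt{\abs{g}}\,dy=\langle f,\mathcal{L}_{0}(\eta)h\rangle_{\rho_{Y}(\cdot,\eta)},
\]
so that the antisymmetric part $\mathcal{L}_{0}^{A}(\eta):=\tfrac12(\mathcal{L}_{0}(\eta)-\mathcal{L}_{0}(\eta)^{\star})$ vanishes. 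Conceptually this is the reason the regime $(\alpha,\beta)=(1,1)$ differs from $(1,2)$: here the $\eta$-dynamics is averaged out and only the reversible operator $\mathcal{L}_{0}$ survives in the area correction, whereas in Section~\ref{sect:hom1} the non-reversible joint generator $\mathcal{G}$ produced a (presumably nonzero) correction $\langle\chi^{i},\mathcal{G}^{A}\chi^{j}\rangle_{\rho}$.

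Granting this, I would start from the expression for $\tilde A$ provided by \cref{cor:limit3},
\[
\tilde{A}(i,j)=\int\int\chi^{i}(y,\eta)F^{j}(y,\eta)\,\rho_{Y}(dy,\eta)\rho_{\eta}(d\eta)-\int\int F^{i}(y,\eta)\chi^{j}(y,\eta)\,\rho_{Y}(dy,\eta)\rho_{\eta}(d\eta),
\]
and use the Poisson equation $\mathcal{L}_{0}(\eta)\chi^{k}(\cdot,\eta)=-F^{k}(\cdot,\eta)$ together with the symmetry just established to get, for each fixed $\eta$,
\[
\int_{\mathbb{T}^{2}}\chi^{i}F^{j}\,\rho_{Y}(dy,\eta)=-\langle\chi^{i},\mathcal{L}_{0}(\eta)\chi^{j}\rangle_{\rho_{Y}(\cdot,\eta)}=-\langle\mathcal{L}_{0}(\eta)\chi^{i},\chi^{j}\rangle_{\rho_{Y}(\cdot,\eta)}=\int_{\mathbb{T}^{2}}F^{i}\chi^{j}\,\rho_{Y}(dy,\eta).
\]
Integrating this identity against $\rho_{\eta}(d\eta)$ shows that the two double integrals defining $\tilde{A}(i,j)$ agree, hence $\tilde{A}(i,j)=0$ for all $i,j\in\{1,2\}$. (Equivalently, one may note that the two summands $\int\int e_{i}\cdot\Sigma\nabla_{y}\chi^{j}\,d(\rho_{Y}\rho_{\eta})$ and $\int\int\nabla_{y}\chi^{i}\cdot\Sigma e_{j}\,d(\rho_{Y}\rho_{\eta})$ in the first form of $\tilde A$ are both equal to $-\int\int\chi^{\bullet}F^{\bullet}\,d(\rho_{Y}\rho_{\eta})$ after integrating $\nabla_{y}$ by parts, which is the computation already carried out in the proof of \cref{cor:limit3}.)

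There is essentially no genuine obstacle: the content of the corollary is the reversibility of $\mathcal{L}_{0}(\eta)$, after which the cancellation is purely algebraic. The only point worth a sentence is that all manipulations are legitimate at the regularity available — $\chi(\cdot,\eta)\in C^{2}(\mathbb{T}^{2},\R^{2})$ with $\Sigma$ smooth and bounded (so the spatial integration by parts over the compact torus $\mathbb{T}^{2}$ is valid with no boundary contribution), and the polynomial-in-$\eta$ bounds $\abs{\nabla_{\eta}^{k}\chi}\lesssim 1+\abs{\eta}^{l_{k}}$ from \cite[Prop.~A.2.2]{duncan} together with the Gaussian tails of $\rho_{\eta}$ make the subsequent $\eta$-integration and the use of Fubini unproblematic.
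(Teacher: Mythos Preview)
Your proof is correct and follows essentially the same approach as the paper: both start from the expression $\tilde{A}(i,j)=\langle\chi^{i},F^{j}\rangle-\langle F^{i},\chi^{j}\rangle$, substitute $F=-\mathcal{L}_{0}\chi$, and invoke the symmetry of $\mathcal{L}_{0}(\eta)$ with respect to $\rho_{Y}(\cdot,\eta)$ (equivalently, with respect to $\rho_{Y}\rho_{\eta}$) to conclude the two terms cancel. Your version is slightly more explicit in deriving the symmetry from the divergence form of $\mathcal{L}_{0}$ and in justifying the integrability needed for Fubini, but the argument is the same.
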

\begin{proof}
Using that $(-\mathcal{L}_{0})\chi=F$, we obtain
\begin{align*}
\tilde{A}(i,j)&=\int\int \chi^{i}(y,\eta)F^{j}(y,\eta)\rho_{y}(dy,\eta)\rho_{\eta}(d\eta) - \int\int F^{i}(y,\eta)\chi^{j}(y,\eta)\rho_{y}(dy,\eta)\rho_{\eta}(d\eta)\\&=\int\int \chi^{i}(y,\eta)(-\mathcal{L}_{0})\chi^{j}(y,\eta)\rho_{y}(dy,\eta)\rho_{\eta}(d\eta) - \int\int (-\mathcal{L}_{0})\chi^{i}(y,\eta)\chi^{j}(y,\eta)\rho_{y}(dy,\eta)\rho_{\eta}(d\eta)\\&=\int\int (-\mathcal{L}_{0}^{\star})\chi^{i}(y,\eta)\chi^{j}(y,\eta)\rho_{y}(dy,\eta)\rho_{\eta}(d\eta) - \int\int (-\mathcal{L}_{0})\chi^{i}(y,\eta)\chi^{j}(y,\eta)\rho_{y}(dy,\eta)\rho_{\eta}(d\eta)\\&=0
\end{align*}
and the claim follows as $\mathcal{L}_{0}$ is symmetric with respect to the measure $\rho_{y}(dy,\eta)\rho_{\eta}(d\eta)$, that is $\mathcal{L}_{0}^{\star}=\mathcal{L}_{0}$, where $\mathcal{L}_{0}^{\star}$ denotes the adjoint with respect to $L^{2}(\rho_{y}(dy,\eta)\rho_{\eta}(d\eta))$.
\end{proof}
\begin{remark}\label{rem:conjecture}
Note that for the limit of the Stratonovich integrals no area correction appears. This is due to the fact that the underlying Markov process $(Y_{t})_{t\geqslant 0}$ (for fixed $\eta\in\R^{2K}$) is reversible when started in $\rho_{Y}(\cdot,\eta)$ (meaning that the generator $\mathcal{L}_{0}(\eta)=\mathcal{L}_{0}(\eta)^{\star}$ is symmetric with respect to $L^{2}(\rho_{Y}(\cdot,\eta))$). This is a phenomenon that is observed generally, see also the discussion in the introduction of \cite{radditive} about the relation of a vanishing Stratonovich area correction and an underlying reversible Markov process. The conjecture from \cite{radditive} states that the Stratonovich area correction vanishes if and only if the underlying Markov process is reversible. 
\end{remark}
\begin{remark}
Via a symmetry argument utilizing the Fourier expansion of the Helfrich surface $H$, one can show that the limiting drift $L$ actually vanishes, $L=0$. For  proof see \cite[Proposition 5.2.4]{duncan}.
\end{remark}
\end{subsection}

\begin{subsection}{Tightness in $C_{\gamma,T}$ for $\gamma<1/2$}
The tightness is again a consequence of \cref{lem:rp-kol}, once we have verified the moment bounds.
\begin{proposition}\label{prop:tight3}
Let $(X^{\epsilon},Y^{\epsilon},\eta^{\epsilon})$ be as in \cref{prop:limit3}. Then the following moment bounds hold true for any $p\geqslant 2$
\begin{align}\label{eq:xest}
\sup_{\epsilon}\E[\abs{X^{\epsilon,i}(t)-X^{\epsilon,i}(s)}^{p}]\lesssim\abs{t-s}^{{p/2}},\quad\forall s,t\in\Delta_{T}
\end{align} and
\begin{align}
\sup_{\epsilon}\E\bigg[\abs[\bigg]{\int_{s}^{t}X^{\epsilon, i}_{s,r} dX^{\epsilon,j}_{r}}^{p/2}\bigg]\lesssim\abs{t-s}^{{p/2}},\quad\forall s,t\in\Delta_{T}
\end{align} 
for $i,j\in\{1,2\}$.\\
In particular, tightness of $(X^{\epsilon},\mathbb{X}^{\epsilon})$ in $C_{\gamma,T}$ for $\gamma<1/2$ follows.
\end{proposition}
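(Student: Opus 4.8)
The plan is to mirror the proof of \cref{prop:tight1}: first establish the two displayed moment bounds, and then conclude tightness of $(X^{\epsilon},\mathbb{X}^{\epsilon})$ in $C_{\gamma,T}$ for $\gamma<1/2$ by invoking the rough-path Kolmogorov criterion \cref{lem:rp-kol} (its extra hypothesis $\sup_{\epsilon}\E[\abs{X^{\epsilon}_{0}}]<\infty$ being immediate here). For each of the two quantities I would produce two estimates --- a \emph{raw} one, read off directly from the system \eqref{eq:system3}, and a \emph{refined} one, obtained from the Poisson-equation decomposition \eqref{eq:decomp}--\eqref{eq:driftF} of $X^{\epsilon}$ --- and combine them by splitting on the dichotomy $\epsilon\gtrless\abs{t-s}^{1/2}$: the raw estimate is good on $\{\epsilon\geqslant\abs{t-s}^{1/2}\}$ and the refined one on $\{\epsilon<\abs{t-s}^{1/2}\}$. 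Throughout I would use boundedness \eqref{eq:Sigma} of $\Sigma$, the linear growth \eqref{eq:F} of $F$, the polynomial-in-$\eta$ (uniform-in-$y$) growth bounds on $\chi$, $\nabla_{y}\chi$, $\nabla_{\eta}\chi$ and $\mathcal{L}_{\eta}\chi$ from \cite[Proposition A.2.2]{duncan} --- which, since under $\rho_{Y}(dy,\eta)\rho_{\eta}(d\eta)$ the process $\eta^{\epsilon}$ is stationary with Gaussian $\eta$-marginal $N(0,\Pi)$, give $L^{p}$-bounds for all these quantities uniformly in $\epsilon$ and $t$ --- the Burkholder--Davis--Gundy (BDG) inequality, and the generalized Minkowski inequality.

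For the increment bound: directly from \eqref{eq:system3}, BDG together with \eqref{eq:Sigma} and \eqref{eq:F} gives $\E[\abs{X^{\epsilon,i}_{t}-X^{\epsilon,i}_{s}}^{p}]\lesssim\epsilon^{-p}\abs{t-s}^{p}+\abs{t-s}^{p/2}$; from the decomposition \eqref{eq:decomp} instead, the boundary term contributes $\lesssim\epsilon^{p}$, the $\sqrt{\epsilon}$-stochastic integral $\lesssim\epsilon^{p/2}\abs{t-s}^{p/2}$, the $\sqrt{2\Sigma}$-martingale term $\lesssim\abs{t-s}^{p/2}$, and the $\int_{s}^{t}(\mathcal{L}_{\eta}\chi)^{i}\,dr$-drift $\lesssim\abs{t-s}^{p}$, all via BDG, Minkowski and the growth bounds, so that $\E[\abs{X^{\epsilon,i}_{t}-X^{\epsilon,i}_{s}}^{p}]\lesssim\epsilon^{p}+\abs{t-s}^{p/2}$. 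Using the first estimate on $\{\epsilon\geqslant\abs{t-s}^{1/2}\}$, the second on $\{\epsilon<\abs{t-s}^{1/2}\}$, and $\abs{t-s}^{p}\leqslant T^{p/2}\abs{t-s}^{p/2}$ on $[0,T]$, the first moment bound \eqref{eq:xest} follows.

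For the iterated integral I may then assume \eqref{eq:xest}, i.e.\ $\norm{X^{\epsilon,i}_{s,r}}_{L^{p}}\lesssim\abs{r-s}^{1/2}$ uniformly in $\epsilon$. Writing $dX^{\epsilon,j}_{r}=\epsilon^{-1}F^{j}(Y^{\epsilon}_{r},\eta^{\epsilon}_{r})\,dr+\sqrt{2\Sigma}(Y^{\epsilon}_{r},\eta^{\epsilon}_{r})(j,\cdot)\,dB_{r}$ and applying generalized Minkowski to the drift and BDG to the martingale, the raw estimate is $\E[\abs{\int_{s}^{t}X^{\epsilon,i}_{s,r}\,dX^{\epsilon,j}_{r}}^{p/2}]\lesssim\epsilon^{-p/2}\abs{t-s}^{3p/4}+\abs{t-s}^{p/2}$, which is $\lesssim\abs{t-s}^{p/2}$ on $\{\epsilon\geqslant\abs{t-s}^{1/2}\}$. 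For the refined estimate I would differentiate \eqref{eq:decomp}, i.e.\ use $dX^{\epsilon,j}_{r}=-\epsilon\,d\chi^{j}(Y^{\epsilon}_{r},\eta^{\epsilon}_{r})+\sqrt{\epsilon}\,\nabla_{\eta}\chi^{j}\cdot\sqrt{2\Gamma\Pi}\,dW_{r}+(e_{j}+\nabla_{y}\chi^{j})\cdot\sqrt{2\Sigma}\,dB_{r}+(\mathcal{L}_{\eta}\chi)^{j}\,dr$; the last three contributions to $\int_{s}^{t}X^{\epsilon,i}_{s,r}\,dX^{\epsilon,j}_{r}$ are bounded in $L^{p/2}$ by $\epsilon^{p/4}\abs{t-s}^{p/2}$, $\abs{t-s}^{p/2}$ and $\abs{t-s}^{3p/4}$ respectively (BDG, Minkowski, the growth bounds and \eqref{eq:xest}), while for $-\epsilon\int_{s}^{t}X^{\epsilon,i}_{s,r}\,d\chi^{j}_{r}$ I would integrate by parts, obtaining $-\epsilon X^{\epsilon,i}_{s,t}\chi^{j}_{t}+\int_{s}^{t}\chi^{j}_{r}F^{i}_{r}\,dr+\epsilon\int_{s}^{t}\chi^{j}_{r}\sqrt{2\Sigma}(i,\cdot)\,dB_{r}+\int_{s}^{t}(2\Sigma\nabla_{y}\chi^{j})_{i}\,dr$ (using that, by Itô's formula and the scalings in \eqref{eq:system3}, the martingale part of $\chi^{j}(Y^{\epsilon},\eta^{\epsilon})$ is $\epsilon^{-1}\nabla_{y}\chi^{j}\cdot\sqrt{2\Sigma}\,dB+\epsilon^{-1/2}\nabla_{\eta}\chi^{j}\cdot\sqrt{2\Gamma\Pi}\,dW$, so that $\langle X^{\epsilon,i},\chi^{j}\rangle_{s,t}=\epsilon^{-1}\int_{s}^{t}(2\Sigma\nabla_{y}\chi^{j})_{i}\,dr$), and these four terms are bounded in $L^{p/2}$ by $\epsilon^{p/2}\abs{t-s}^{p/4}$, $\abs{t-s}^{p/2}$, $\epsilon^{p/2}\abs{t-s}^{p/4}$ and $\abs{t-s}^{p/2}$. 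Altogether the refined estimate is $\lesssim\abs{t-s}^{p/2}+\epsilon^{p/2}\abs{t-s}^{p/4}$, which is $\lesssim\abs{t-s}^{p/2}$ on $\{\epsilon<\abs{t-s}^{1/2}\}$; combining with the raw estimate on the complement, and using $\abs{t-s}^{3p/4}\leqslant T^{p/4}\abs{t-s}^{p/2}$ on $[0,T]$, gives the second moment bound, and \cref{lem:rp-kol} then yields the asserted tightness.

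I expect the main obstacle to be the bookkeeping of the negative powers of $\epsilon$ --- the $\epsilon^{-1}$ in the drift of $X^{\epsilon}$, the $\epsilon^{-1/2}$ occurring in the cross term \eqref{eq:4}, and the $\epsilon^{-2}$, $\epsilon^{-1}$, $\epsilon^{-1/2}$ produced when Itô's formula is applied to $\chi(Y^{\epsilon},\eta^{\epsilon})$ --- and in verifying that, after substituting the Poisson equation $\mathcal{L}_{0}\chi=-F$ (through \eqref{eq:driftF}), every surviving term either carries a genuinely positive power of $\epsilon$ or is harmless, so that the interpolation $\epsilon\gtrless\abs{t-s}^{1/2}$ really restores the Kolmogorov exponent $p/2$. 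A minor point, innocuous on the finite horizon $[0,T]$, is that the $\mathcal{L}_{\eta}\chi$-drift carries no $\epsilon$-smallness and, like the factor $\int_{s}^{t}\abs{r-s}^{1/2}\,dr$ arising from Minkowski, only produces powers $\abs{t-s}^{p}$ or $\abs{t-s}^{3p/4}$, which are absorbed into $\abs{t-s}^{p/2}$.
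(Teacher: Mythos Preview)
Your proposal is correct and follows essentially the same approach as the paper: the trade-off argument for the increment bound via the raw estimate and the Poisson-equation decomposition \eqref{eq:decomp}, combined with the same $\epsilon\gtrless\abs{t-s}^{1/2}$ dichotomy, exactly as in \cref{prop:tight1}. The only organizational difference is that for the iterated-integral bound the paper invokes the ready-made decomposition \eqref{eq:boldX} (i.e.\ it decomposes the first factor $X^{\epsilon,i}_{s,r}$), whereas you decompose the integrator $dX^{\epsilon,j}_{r}$ via \eqref{eq:decomp} and then integrate the boundary term by parts; both routes use the same ingredients (BDG, Minkowski, the growth bounds from \cite[Proposition A.2.2]{duncan}, stationarity of $\eta^{\epsilon}$) and the same trade-off mechanism.
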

\begin{proof}
The arguments are analogous as for \cref{prop:tight1}. For the estimate for $X^{\epsilon}$ we use, similarly as in \cref{prop:tight1}, a trade-off argument for the drift term using the decomposition \eqref{eq:decomp}. For the iterated integrals, we use the bound \eqref{eq:xest} on $X^{\epsilon}$ and the decomposition \eqref{eq:boldX}. 
\end{proof}

\end{subsection}

\begin{subsection}{Rough homogenization limit in the $(\alpha,\beta)=(1,1)$-regime}
In this subsection, we conclude on our second main theorem, which is a corollary from \cref{prop:limit3} and \cref{prop:tight3}. The corollary on the Statonovich lift then follows from the result for the Itô lift (\cref{thm:main-thm2} below), \cref{cor:limit3} and \cref{cor:area-vanish}.
\begin{theorem}[Itô-lift]\label{thm:main-thm2}
Let $(X^{\epsilon},Y^{\epsilon},\eta^{\epsilon})$, $X$ and $\mathbb{X}^{\epsilon},\mathbb{X}$ be as in \cref{prop:limit3}. Then for all $\gamma<1/2$, the weak convergence in the rough path space $C_{\gamma,T}$ of
\begin{align}
(X^{\epsilon},\mathbb{X}^{\epsilon})\Rightarrow (X,(s,t)\mapsto\mathbb{X}_{s,t}+A(t-s))
\end{align} 
follows when $\epsilon\to 0$ and where $A=(A(i,j))_{i,j\in\{1,2\}}$ denotes the matrix with elements
\begin{align}
A(i,j)=\langle\chi^{i},\mathcal{L}_{0}\chi^{j}\rangle_{\rho_{Y}(\cdot,\eta)\rho_{\eta}}+\langle X^{i},X^{j}\rangle_{1}-\int\int (e_{i}+\nabla_{y}\chi^{i}(y,\eta))\cdot 2\Sigma(y,\eta) e_{j}\rho_{Y}(dy,\eta)\rho_{\eta}(d\eta).
\end{align} 
\end{theorem}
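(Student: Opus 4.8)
The plan is to obtain \cref{thm:main-thm2} as a direct consequence of the two ingredients already assembled in this section: the one-dimensional distributional convergence of the iterated integrals from \cref{prop:limit3}, and the uniform moment bounds from \cref{prop:tight3}. The structure of the argument mirrors that of \cref{thm:ito1}, so first I would recall that \cref{prop:tight3}, via \cref{lem:rp-kol}, gives tightness of the family $(X^{\epsilon},\mathbb{X}^{\epsilon})_{\epsilon}$ in $C_{\gamma,T}$ for every $\gamma<1/2$ (using also $\sup_{\epsilon}\E[\abs{X_0^{\epsilon}}]<\infty$, which holds since $(Y^{\epsilon}_0,\eta^{\epsilon}_0)\sim\rho_Y\rho_{\eta}$ has all moments and $X^\epsilon_0$ is bounded). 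By Prokhorov, it then suffices to identify every subsequential limit, for which convergence of finite-dimensional distributions of $(X^{\epsilon},\mathbb{X}^{\epsilon})$ is enough together with the known form of the limiting rough path.

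Next I would upgrade the single-time convergence in \cref{prop:limit3} to convergence of the finite-dimensional distributions of $(X^{\epsilon}_t,\mathbb{X}^{\epsilon}_{s,t})$. For $X^{\epsilon}$ itself the joint (in fact functional) convergence to $X=\sqrt{2D}Z+\cdot\,L$ is already contained in \cite[Theorem 5.2.2]{duncan}. For the iterated integrals I would revisit the decomposition \eqref{eq:boldX}: the term $a_F^{\epsilon}(i,j)$ and the quadratic-variation term converge in probability to the deterministic quantities $t\,a_F(i,j)$ and $t\,c^{i,j}$ (by \cref{prop:mainprop}), hence trivially jointly and in finite-dimensional distribution; the remaining product and stochastic-integral terms converge weakly as processes in $C(\Delta_T,\R^{2\times2})$ by \cref{prop:UCV}, since the relevant semimartingales satisfy UCV (boundedness of $\Sigma$ and the quadratic-variation bounds (A.7)--(A.8) of \cite[Proposition A.2.2]{duncan}) and converge jointly with $X^{\epsilon,j}$ to $(X^j,X^i)$. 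Slutzky's lemma then gives joint convergence of $((\mathbb{X}^{\epsilon}_{s,t}(i,j))_{i,j})_{\epsilon}$, and combined with the functional convergence of $(X^{\epsilon})$ this yields convergence of all finite-dimensional distributions of $(X^{\epsilon},\mathbb{X}^{\epsilon})$ to those of $(X,(s,t)\mapsto\mathbb{X}_{s,t}+A(t-s))$, with $A(i,j)=\langle\chi^i,\mathcal{L}_0\chi^j\rangle_{\rho_Y\rho_{\eta}}+\langle X^i,X^j\rangle_1-\int\int(e_i+\nabla_y\chi^i)\cdot2\Sigma e_j\,\rho_Y(dy,\eta)\rho_{\eta}(d\eta)$, exactly the object appearing in \cref{prop:limit3}.

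Finally, I would combine tightness in $C_{\gamma,T}$ with the identification of the finite-dimensional limit: any weakly convergent subsequence of $(X^{\epsilon},\mathbb{X}^{\epsilon})$ in $C_{\gamma,T}$ has a limit whose law is determined by its finite-dimensional distributions (evaluation maps are continuous on $C_{\gamma,T}$), hence the limit must be $(X,(s,t)\mapsto\mathbb{X}_{s,t}+A(t-s))$; since every subsequence has the same limit, the whole family converges, proving $(X^{\epsilon},\mathbb{X}^{\epsilon})\Rightarrow(X,(s,t)\mapsto\mathbb{X}_{s,t}+A(t-s))$ in $C_{\gamma,T}$. I do not expect any serious obstacle here, as all the analytic work is already done in \cref{prop:limit3} and \cref{prop:tight3}; the only point requiring mild care is checking that the $C(\Delta_T,\R^{2\times2})$-convergence coming from \cref{prop:UCV} is compatible, via Slutzky, with the in-probability convergence of the deterministic pieces, so that the finite-dimensional distributions of the full expression \eqref{eq:boldX} converge jointly — and that the limiting lift is indeed a genuine $\gamma$-Hölder rough path (which holds since adding the linear-in-$(t-s)$ correction $A(t-s)$ to $\mathbb{X}$ preserves Chen's relation and the $2\gamma$-Hölder bound, as noted in the remark following the definition of $C_{\gamma,T}$).
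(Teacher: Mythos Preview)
Your proposal is correct and follows essentially the same approach as the paper: the theorem is obtained as a corollary of \cref{prop:limit3} and \cref{prop:tight3}, upgrading the single-time convergence to finite-dimensional convergence via the decomposition \eqref{eq:boldX}, \cref{prop:UCV} and Slutzky, and then combining this with the tightness in $C_{\gamma,T}$ from \cref{lem:rp-kol}. This mirrors exactly the argument given for the analogous \cref{thm:ito1} in the $(\alpha,\beta)=(1,2)$ regime.
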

\begin{corollary}[Stratonovich-lift]
Let $(X^{\epsilon},Y^{\epsilon},\eta^{\epsilon})$, $X$ and $\tilde{\mathbb{X}}^{\epsilon},\tilde{\mathbb{X}}$ be as in \cref{cor:limit3}. Then for all $\gamma<1/2$, the weak convergence in the rough paths space $C_{\gamma,T}$ of
\begin{align}
(X^{\epsilon},\tilde{\mathbb{X}}^{\epsilon}) \Rightarrow (X,\tilde{\mathbb{X}})
\end{align} follows when $\epsilon\to 0$.
\end{corollary}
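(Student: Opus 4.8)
The plan is to obtain the statement by the same finite-dimensional-convergence-plus-tightness scheme used for \cref{thm:ito1} and \cref{thm:main-thm2}, now also using the vanishing of the Stratonovich area correction from \cref{cor:area-vanish}. First I would record the Itô--Stratonovich relation
\[
\tilde{\mathbb{X}}^{\epsilon}_{s,t}(i,j)=\mathbb{X}^{\epsilon}_{s,t}(i,j)+\tfrac{1}{2}\big(\langle X^{\epsilon,i},X^{\epsilon,j}\rangle_{t}-\langle X^{\epsilon,i},X^{\epsilon,j}\rangle_{s}\big),
\]
together with the analogous identity for the limit, where $\langle X^{i},X^{j}\rangle_{t}=2D(i,j)\,t$ since $X=\sqrt{2D}Z+tL$. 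Combining \cref{cor:limit3} with \cref{cor:area-vanish} yields, for every fixed $0\le s<t\le T$, the weak convergence $\tilde{\mathbb{X}}^{\epsilon}_{s,t}\Rightarrow\tilde{\mathbb{X}}_{s,t}$ in $\R^{2\times2}$; joint convergence of finitely many marginals of $X^{\epsilon}$ together with finitely many increments $\tilde{\mathbb{X}}^{\epsilon}_{s_k,t_k}$ then follows exactly as in the proof of \cref{thm:main-thm2}, via the decomposition \eqref{eq:boldX} and \cite[Theorem 5.2.2]{duncan} (the part treated via \cref{prop:UCV} converges even at the process level, and the remaining terms converge in probability).

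For tightness of $(X^{\epsilon},\tilde{\mathbb{X}}^{\epsilon})$ in $C_{\gamma,T}$ with $\gamma<1/2$ I would invoke \cref{lem:rp-kol}. The increment bound \eqref{eq:kx} for $X^{\epsilon}$ is already supplied by \cref{prop:tight3}; for \eqref{eq:kxx} the Itô--Stratonovich relation gives, for any $p\ge2$,
\[
\E\Big[\big|\textstyle\int_{s}^{t}X^{\epsilon,i}_{s,r}\circ dX^{\epsilon,j}_{r}\big|^{p/2}\Big]\lesssim\E\Big[\big|\textstyle\int_{s}^{t}X^{\epsilon,i}_{s,r}\,dX^{\epsilon,j}_{r}\big|^{p/2}\Big]+\E\Big[\big|\textstyle\int_{s}^{t}e_{i}\cdot\Sigma(Y^{\epsilon}_{r},\eta^{\epsilon}_{r})e_{j}\,dr\big|^{p/2}\Big],
\]
where the first summand is $\lesssim|t-s|^{p/2}$ by \cref{prop:tight3} and the second is $\lesssim|t-s|^{p/2}$ directly from boundedness of $\Sigma$, \eqref{eq:Sigma}. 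Hence the hypotheses of \cref{lem:rp-kol} hold and $(X^{\epsilon},\tilde{\mathbb{X}}^{\epsilon})_{\epsilon}$ is tight in $C_{\gamma,T}$.

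Combining tightness with the convergence of the finite-dimensional distributions identifies the weak limit uniquely as $(X,\tilde{\mathbb{X}})$, and since $\tilde{A}=0$ this limiting lift is precisely the canonical Stratonovich lift of $X$, which completes the proof. I do not expect a genuine obstacle here: the substantive content — the representation \eqref{eq:boldX}, the ergodic averaging via \cref{prop:mainprop}, and the reversibility computation giving $\tilde{A}=0$ in \cref{cor:area-vanish} — is already in place, and the only point requiring a little care is checking that the added quadratic-covariation term neither shifts the limit nor spoils the $2\gamma$-Hölder moment estimate, both of which follow immediately from boundedness of $\Sigma$.
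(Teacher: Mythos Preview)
Your proposal is correct and follows essentially the same route as the paper: the paper simply states that the corollary follows from the It\^o rough-path result (\cref{thm:main-thm2}), \cref{cor:limit3}, and the vanishing $\tilde{A}=0$ of \cref{cor:area-vanish}, leaving the tightness of the Stratonovich lift implicit. Your explicit verification that the added quadratic-covariation term preserves both the finite-dimensional limit and the $2\gamma$-H\"older moment bound via boundedness of $\Sigma$ is exactly the routine step the paper omits.
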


\end{subsection}

\end{section}

\begin{section}*{Acknowledgements}
A. Dj.     was partially  funded by Deutsche Forschungsgemeinschaft (DFG)
    through the grant CRC 1114: “Scaling Cascades in Complex Systems”,
    Project Number 235221301 and partially by DFG under
Germany’s Excellence Strategy – The Berlin Mathematics Research Center MATH+ (EXC-2046/1,
project ID: 390685689).\\
H.K. was funded by the DFG under
Germany’s Excellence Strategy – The Berlin Mathematics Research Center MATH+ (EXC-2046/1,
project ID: 390685689).\\
We gratefully thank Peter Friz for fruitful discussions on the topic.

\end{section}

\bibliographystyle{alpha}
\bibliography{rough-homogenization-references}

\end{document}